\newtheorem{teor}{Theorem}[section]
\newtheorem{defi}{Definition}
\newtheorem{lema}[teor]{Lemma}
\newtheorem{prop}[teor]{Proposition}
\newtheorem{cor}[teor]{Corollary}
\newtheorem{rem}[teor]{Remark}
\newtheorem{ejems}[teor]{Examples}
\def\Hom{\mathop{\rm Hom}\nolimits}
\def\Ext {\mathop{\rm Ext}\nolimits}
\def\Ker {\mathop{\rm Ker}\nolimits}
\def\Im {\mathop{\rm Im}\nolimits}
\begin{document}
\title{Locally finitely presented categories with no flat objects}
\author{ \\
Sergio Estrada \\ sestrada@um.es \\
Departamento de Matem\'atica Aplicada \\
Universidad de Murcia \\ 30100 Murcia \\ SPAIN \\ \\
\\
\ \ Manuel Saor\'{\i}n \\ \ \ \ msaorinc@um.es \\
\ \
Departamento de Matem\'aticas \\
\ \ \ Universidad de Murcia \\
\ \ 30100 Murcia \\
SPAIN \footnote{The authors have been partially supported by the DGI
MTM2010-20940-C02-02 and by
the
Fundaci\'on Seneca 04555/GERM/06. }}
\date{}
\maketitle
\thispagestyle{empty}
\begin{abstract}
If $X$ is a quasi--compact and quasi--separated scheme, the
category
$Qcoh(X)$ of quasi--coherent sheaves on $X$ is locally finitely presented.
Therefore categorical flat quasi--coherent sheaves in
the sense of \cite{S} naturally arise. But there is also the standard
definition
of flatness in
$Qcoh(X)$ from the stalks. So it makes sense to wonder the relationship (if
any) between these two notions. In this paper we show that there are
plenty of
locally finitely presented categories having no other categorical flats than
the zero object.
As particular instance, we show that $Qcoh(\mathbf{P}^n(R)))$ has no other
categorical flat objects than zero, where $R$ is any commutative ring.
\end{abstract}

{\footnotesize{\it 2010 Mathematics Subject Classification.}
18E15,18C35,18F20,18A40,16G20}%

{\footnotesize{\it Key words and phrases}: locally finitely presented
category, Grothendieck category, flat object, quasi--coherent sheaf, projective
scheme, quiver representation .}

\section{Introduction}
It is widely known that locally finitely presented additive categories are
the natural framework for considering a theory of purity (see \cite{CB}). We
recall that a locally finitely presented additive category $\mathcal G$
is an
additive category with direct limits such that every object is a direct
limit of
finitely presented objects, and the class of finitely presented objects is
skeletally small. And a sequence $0\to L\stackrel{f}{\to}M
\stackrel{g}{\to}N\to 0$ in $\mathcal G$ is {\it pure} if $$0\to \Hom
(T,L)\to \Hom(T,M)\to \Hom(T,N)\to 0$$ is exact, for each finitely
presented $T$
of $\mathcal G$ (in this case $g$ is said to be a {\it pure epimorphism}).
Therefore locally finitely presented additive categories come equipped
with a
canonical notion of flat object (in the sense of Stenstr\"om \cite{S})
which is
defined in terms of pure epimorphisms (this definition is equivalent to
that of
\cite[Section 1.3]{CB}). Namely,
$F $ is {\it flat} if every epimorphism $M\to N$ is a pure epimorphism.

Moreover, if we assume that
$\mathcal G$ has enough projectives then it is not difficult to check that
flat objects are precisely the objects satisfying a Govorov-Lazard
like theorem. The Govorov-Lazard Theorem says that the closure under direct
limits
of the class of finitely generated projective objects is equal to the class
of flat objects (see \cite{G} and \cite[Th\'eor\`eme 1.2]{L}).
We point out that a flat and finitely presented object must be
necessarily projective. Let us denote by $\mathcal Flat$ the class of all
flat objects in a locally finitely presented category. Despite of
the
Govorov-Lazard theorem, this class has homological significance as well. In
\cite{CPT,Rump}
is proved, as a consequence, that $\mathcal Flat$ provides with minimal flat
resolutions that are
unique up to homotopy, so they can be used to compute right derived
functors of
the $\Hom$ functor. Also
the existence of such minimal approximations with respect to $\mathcal
Flat$ can
be used to infer the existence of pure-injective envelopes in a locally
finitely presented category (see \cite{Herzog}).

However, when working with the category $Qcoh(X)$ of quasi--coherent
sheaves on a scheme, flatness of a sheaf is defined in terms of the flatness
of its stalks. And, in general, this {\it geometrical}
notion of flatness differs from Stenstr\"om's in the non-affine case. For
instance, if $X={\bf P}^1(R)$ (with $R$ any commutative ring) the
line bundle $\mathcal O(n)$ are finitely
presented and flat, but not projective. In addition, most of the schemes $X$
that
occur in practice in algebraic geometry (like quasi--compact and
quasi--separated schemes) are such that
$Qcoh(X)$ is locally finitely presented (see \cite[I.6.9.12]{GD}), so
one also
might consider the class $\mathcal Flat$ on $Qcoh(X)$ as a natural
choice. We
point out that the
existence of unique up-to-homotopy geometrical flat resolutions is
guaranteed by \cite[Section 5]{EE}. So a natural question arises: is
there any
relation between these two notions of flatness in $Qcoh(X)$?. Whereas
geometrical flatness is well--known and widely studied in Algebraic
Geometry,
this seems not to be the case of the class $\mathcal Flat$ in
$Qcoh(X)$ ($X$ quasi--compact and quasi--separated), although in view of the
previous comments it turns out to be a canonical choice on each locally
finitely presented additive category. We note
that Crivei, Prest and
Torrecillas have recently considered this dichotomy of the two
notions of flatness in categories of sheaves of modules in \cite[Section
3]{CPT}.

Thus we devote this paper to study the class $\mathcal Flat$ of flat
objects in
locally finitely presented Gro\-then\-dieck categories. To be more
precise, we
study the {\it lack} of flat objects (apart from the zero object) in such
categories. We find sufficient conditions on a locally finitely presented
Grothendieck category to ensure that it contains no nonzero flats. They
allow
to construct a plethora of examples of locally finitely presented
Grothendieck
categories having no other flats than the zero object, showing that this
behavior is not pathological at all in locally finitely presented
categories. To
illustrate this we give examples from commutative algebra, algebraic
geometry and representation theory.

Now we will formulate a more precise statements of the resuls of the paper.
Our setup on the locally finitely presented category $\mathcal G$ is
based upon
a technical assumption on the generators of $\mathcal G$ as well as in
considering a hereditary torsion class of finite type on $\mathcal G$
(proposition \ref{prop.locally coherent without flats}). The
first set of applications (theorem \ref{teor.A-Gr
without flat objects}) deals with the quotient category of graded
modules modulo
the virtually finitely graded ones (see Section \ref{section.Quotient
categories
of
graded modules without flat
objects} for unexplained notation and
terminology):
\begin{teor}
Let $A=\oplus_{n\geq 0}A_n$ be a positively graded ring with the
property that $A_{\geq 1}$ is finitely presented and $A_{\geq n}$ is
finitely generated, as left ideals, for all $n>>0$. Let
$\mathcal{T}$ be the class of virtually finitely graded left
$A$-modules and $t$ the associated torsion radical. The category
$A-Gr/\mathcal{T}$ is locally finitely presented and, in case
$\Ext_A^1(A_0,A/t(A))=0$, it has no nonzero flat objects.
\end{teor}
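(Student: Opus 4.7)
The plan splits into two steps: first verify that $\mathcal{T}$ is a hereditary torsion class of finite type on $A-Gr$ (so that the Gabriel quotient $A-Gr/\mathcal{T}$ is automatically locally finitely presented); then apply Proposition \ref{prop.locally coherent without flats} to rule out nonzero flat objects, using the hypothesis $\Ext_A^1(A_0,A/t(A))=0$ to verify its technical assumption on generators.

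For the first step I would exhibit explicit finitely presented generators of $\mathcal{T}$. Since $A_{\geq 1}$ is finitely presented by hypothesis, $A_0\cong A/A_{\geq 1}$ is finitely presented; for $n\gg 0$ the finite generation of $A_{\geq n}$ as a left ideal makes $A/A_{\geq n}$ finitely presented as well. These cyclic modules and their shifts lie in $\mathcal{T}$, and every virtually finitely graded module should be the directed union of finitely generated submodules, each of which is an iterated extension of shifts of the $A/A_{\geq n}$. Hence $\mathcal{T}$ is generated, as a hereditary torsion class, by a set of finitely presented objects, i.e.\ it is of finite type. Local finite presentation of $A-Gr/\mathcal{T}$ then follows from the standard theory of Gabriel quotients at hereditary torsion classes of finite type in a locally finitely presented Grothendieck category.

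For the second step I would take the canonical generators of $A-Gr/\mathcal{T}$ induced by the shifts $A(n)$, $n\in\mathbb{Z}$, under the localization functor, and apply Proposition \ref{prop.locally coherent without flats}. The technical assumption of that proposition should reduce, in the present setting, to a splitting/vanishing condition on a distinguished generator which, after transporting back to $A-Gr$ via the section functor (right adjoint to the localization), becomes the statement that every short exact sequence $0\to A/t(A)\to E\to A_0\to 0$ in $A-Gr$ splits. This is precisely the content of $\Ext_A^1(A_0,A/t(A))=0$, with $A/t(A)$ the $t$-closed replacement of $A$ and $A_0$ playing the role of the finitely presented cyclic generator sitting in degree zero.

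The main obstacle will be the translation between $\Ext$ computed inside $A-Gr$ and the corresponding obstruction inside $A-Gr/\mathcal{T}$: one must invoke the localization/section adjunction carefully and verify that the generators of the quotient category detect flatness in the sense required by Proposition \ref{prop.locally coherent without flats}, and that $A_0$ indeed represents the relevant generator once pushed down to $A-Gr/\mathcal{T}$. Once this dictionary between ambient and quotient Ext is set up, applying the proposition finishes the argument.
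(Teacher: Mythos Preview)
Your high-level plan matches the paper's: invoke Proposition~2.5 with the $\mathbb{Z}$-indexed stratification $\mathcal{S}_n=\{A[n]\}$. But the verification you sketch misidentifies what that proposition actually demands, and this is where the argument stalls.

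Proposition~2.5 has \emph{two} conditions, both stated in the ambient category $A\text{-}Gr$, not in the quotient. Condition~(2) asks that $A[n]/tr_m(A[n])\in\mathcal{T}$ for $m<n$; this is immediate once you compute $tr_m(A[n])=A_{\geq n-m}[n]$, so the quotient is finitely graded. You do not mention this at all. Condition~(1) asks that, for each $n$, only finitely many $m<n$ satisfy either $\Hom_{A\text{-}Gr}(A[n],A/t(A)[m])\neq 0$ or $\Ext^1_{A\text{-}Gr}(T,A/t(A)[m])\neq 0$ for some torsion quotient $T$ of $A[n]$. The $\Hom$ part vanishes for all $m<n$ simply because $(A/t(A))_{m-n}=0$; this elementary degree reason is what you are missing. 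The $\Ext$ part is where the hypothesis enters: every such $T$ is cyclic and finitely graded, hence has a finite filtration whose subquotients are shifts of $A_0$-modules, and one reduces (via the long exact sequence and the fact that $A/t(A)\in\mathcal{T}^\perp$) to the vanishing of $\Ext^1_{A\text{-}Gr}(A_0[k],A/t(A)[m])$, which is the graded piece of $\Ext^1_A(A_0,A/t(A))=0$.

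Your description goes astray when you speak of $A_0$ as ``the relevant generator once pushed down to $A\text{-}Gr/\mathcal{T}$'': $A_0$ is torsion and becomes zero in the quotient. Its role is not as a generator but as the building block of the torsion quotients $T$ appearing in condition~(1). Correspondingly, there is no need to translate $\Ext$ between $A\text{-}Gr$ and $A\text{-}Gr/\mathcal{T}$; Proposition~2.5 is designed precisely so that everything is checked upstairs. The ``main obstacle'' you anticipate does not exist, while the actual computations (the trace identity and the filtration reduction for $\Ext^1$) are absent from your outline. Your alternative route to finite type, via finitely presented generators of $\mathcal{T}$, is legitimate but you should also note that closure under extensions (hence that $\mathcal{T}$ is a torsion class at all) already uses the finite generation of $A_{\geq n}$ for $n\gg 0$.
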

The second one, theorem \ref{teor.Qcoh(X) without flat objects}, relies on
the category of quasi--coherent sheaves on a projective scheme:
\begin{teor}
Let $A=\oplus_{n\geq 0}A_n$ be a positively graded commutative ring
which is finitely generated by $A_1$ as an $A_0$-algebra, let $I$ be
the (graded) ideal of $A$ consisting of those $a\in A$ such that
$A_1^na=0$, for some $n=n(a)\in\mathbb{N}$, and let
$X=\text{Proj}(A)$ be the associated projective scheme. If the two
following conditions hold, then $Qcoh(X)$ has no nonzero
categorical flat object:
\begin{enumerate}
\item The ideal $A_{\geq 1}$ is finitely presented. \item
$\Ext_A^1(A_0,A/I)=0$
\end{enumerate}
\end{teor}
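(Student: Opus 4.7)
The plan is to deduce this from the previous theorem (Theorem \ref{teor.A-Gr without flat objects}) via Serre's classical description of $Qcoh(\text{Proj}(A))$ as a quotient of the category of graded modules. Specifically, since $A$ is commutative and finitely generated by $A_1$ as an $A_0$-algebra, there is an equivalence $Qcoh(X)\simeq A\text{-}Gr/\mathcal{T}$, where $\mathcal{T}$ is the hereditary torsion class of graded modules $M$ such that every homogeneous $m\in M$ is annihilated by some power of $A_{\geq 1}$. The first step is to identify $\mathcal{T}$ with the class of virtually finitely graded modules considered in the previous theorem, so that the associated torsion radical $t$ satisfies $t(A)=I$.

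The second step is to verify the hypotheses of Theorem \ref{teor.A-Gr without flat objects}. Condition (1) is exactly the assumption that $A_{\geq 1}$ is finitely presented. For the second hypothesis, that $A_{\geq n}$ is finitely generated as a left ideal for all $n\gg 0$, I would use that $A$ is generated by $A_1$ over $A_0$: this implies that $A_n$ is a finitely generated $A_0$-module (spanned by degree-$n$ monomials in a finite generating set of $A_1$), and since $A_{\geq n}$ is generated as a left $A$-module by $A_n$, it is finitely generated. Combined with the identification $t(A)=I$, condition (2) then becomes precisely the remaining hypothesis $\Ext_A^1(A_0,A/t(A))=0$ needed to invoke Theorem \ref{teor.A-Gr without flat objects}.

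Applying that theorem yields that $A\text{-}Gr/\mathcal{T}$ has no nonzero flat objects, and transferring via Serre's equivalence gives the conclusion for $Qcoh(X)$. The main subtlety I foresee is the matching of the two torsion classes: one must check carefully that, under the hypothesis that $A$ is generated by $A_1$, the condition $A_1^n a=0$ coincides with $A_{\geq n}a=0$ (up to shifting $n$), so that the torsion class used in Serre's theorem really is the class of virtually finitely graded modules appearing in the previous statement; this in turn ensures $I=t(A)$ as graded ideals. Beyond this identification, the argument is essentially a bookkeeping translation between the graded and the geometric sides.
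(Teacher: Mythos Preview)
Your proposal is correct and follows essentially the same route as the paper: identify $I$ with $t(A)$ using that $A_1^n=A_n$ when $A$ is generated by $A_1$, check that each $A_{\geq n}$ is finitely generated for the same reason, apply Theorem~\ref{teor.A-Gr without flat objects}, and then transfer via Serre's equivalence (Theorem~\ref{teor.Serre}). The identification of the torsion classes that you flag as a subtlety is precisely what Lemma~\ref{lem.the class T in Serre's situation} and the opening line of the paper's proof handle.
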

Conditions (1) and (2) of the previous theorem may look rather technical at
first glance. But it is shown in corollary \ref{cor.examples} that it covers
many natural projective schemes, like $Proj(A)$ whenever $A$ is integrally
closed or Cohen-Macaulay positively graded commutative Noetherian domain.
It also includes the case $X={\bf P}^n(R)$, the projective $n$-space
over a commutative ring $R$ (corollary \ref{cor.Qcoh(P(R)) without flat
objects}).
The last of the application (theorem
\ref{teor.modules-modulo-locally-finite})
goes back to
quotient categories of categories of representations of a quiver
with relations $(Q,\rho)$ over an arbitrary commutative ring:
\begin{teor}
Let $A$ be an algebra with enough idempotents locally finitely
presented by the quiver with relations $(Q,\rho )$, where $Q$ is
connected, locally finite and has no oriented cycles. Let
$A=B\oplus J$ its canonical Wedderburn-Malcev type decomposition and
let $\mathcal{T}$ be the class of torsion $A$-modules, whose torsion
radical is denoted by $t$. The following assertions hold:
\begin{enumerate}
\item If $\rho$ consists of homogeneous relations, $t(A)=0$ and, for
each $i\in
Q_0$, there are only finitely many vertices
$j$ such that $i\preceq j$, then $\mathcal{T}$ is a locally finitely
presented Grothendieck category with no nonzero flat objects.
\item If $Q$ is downward directed and narrow and
$\Ext_A^1(Be_i,\frac{Ae_j}{t(A)e_j})=0$, for all $i,j\in Q_0$, then
$\mathcal{G}/\mathcal{T}$ is a locally finitely presented
Grothendieck category with no nonzero flat objects.
\end{enumerate}
\end{teor}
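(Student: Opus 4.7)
The plan is to derive both parts from Proposition~\ref{prop.locally coherent without flats}, mimicking the strategy used for Theorems~\ref{teor.A-Gr without flat objects} and~\ref{teor.Qcoh(X) without flat objects}. In each case the three-step scheme is: (a) identify a suitable small family of finitely presented generators of the candidate Grothendieck category; (b) check the technical hypothesis on generators required by that proposition; and (c) translate the stated $\Ext^1$-vanishing hypothesis into the Ext-vanishing condition which rules out nonzero flats in the proposition.

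For~(1), I would first show that $\mathcal{T}$ is a Grothendieck subcategory of $A$-Mod closed under direct limits. Using that $Q$ is locally finite with no oriented cycles and that every vertex has only finitely many $\preceq$-successors, one sees that each simple $Be_i$ lies in $\mathcal{T}$ and is finitely presented, and that suitable finite extensions of the $Be_i$ (built from the finite up-sets of $Q_0$) form a set of finitely presented generators of $\mathcal{T}$. The homogeneity of $\rho$ and the assumption $t(A)=0$ then make the presentation $0\to K \to \bigoplus Ae_j\to Be_i\to 0$ behave well with respect to $\mathcal{T}$, which is precisely what is needed to verify the hypothesis on generators in Proposition~\ref{prop.locally coherent without flats} and to conclude that $\mathcal{T}$ has no nonzero flats.

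For~(2) the setting parallels the graded case of Theorem~\ref{teor.A-Gr without flat objects}: downward directedness and narrowness of $Q$ play the roles of positivity of the grading and of finite generation of the ideals $A_{\geq n}$, respectively. I would verify that $\mathcal{G}/\mathcal{T}$ is locally finitely presented by lifting generators from the projectives $Ae_i$ through the Gabriel quotient functor, and check the hypothesis on generators required by the proposition using narrowness (which yields finite presentation of the analogue of $A_{\geq 1}$) together with downward directedness (which supplies the combinatorial order underlying the proposition). The assumed vanishing $\Ext_A^1(Be_i,Ae_j/t(A)e_j)=0$ is then exactly the hypothesis the proposition demands, after identifying $Be_i$ with the simple at vertex $i$ and $Ae_j/t(A)e_j$ with a $t$-torsionfree lift of a projective generator of the quotient.

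The main obstacle in both parts will be step~(b), the verification of the technical assumption on generators. This is where the combinatorial hypotheses on $Q$ (local finiteness, absence of cycles, narrowness, downward directedness) are genuinely used, and where a careful path-by-path bookkeeping of how the idempotents $e_i$ interact with $\rho$ must be carried out. Once that is in place, the stated Ext-vanishing translates directly into the hypothesis of Proposition~\ref{prop.locally coherent without flats}, and the absence of nonzero flat objects follows as in the preceding theorems.
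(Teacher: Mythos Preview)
Your plan for part~(2) is essentially the paper's: one specializes Proposition~\ref{prop.locally coherent without flats} to $\mathcal{G}=A\text{-Mod}$ with $\mathcal{S}_i=\{Ae_i\}$ (this is packaged as Proposition~\ref{prop.no flats in A-Mod/lfd}), and the hypothesis $\Ext_A^1(Be_i,Ae_j/t(A)e_j)=0$ is promoted by a short d\'evissage on the filtration $J^ke_i/J^me_i$ to $\Ext_A^1(Ae_i/J^me_i,Ae_j/t(A)e_j)=0$ for all $m$; together with $\Hom_A(Ae_i,Ae_j/t(A)e_j)\neq 0\Rightarrow i\preceq j$ this makes the finiteness set in condition~1 empty. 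One correction: narrowness is not what gives finite presentation of $Je_i$ (that comes from left local finiteness and finiteness of $\rho e_i$, see Proposition~\ref{prop.algebra with quiver-relations incomplete}); narrowness is used to verify the trace condition~2 of Proposition~\ref{prop.locally coherent without flats}, i.e.\ that $Ae_i/\sum_{k\preceq j}Ae_kAe_i$ is killed by a power of $J$.

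For part~(1) your outline has a genuine gap. Proposition~\ref{prop.locally coherent without flats} produces a statement about a \emph{quotient} $\mathcal{G}/\mathcal{T}$, whereas here the conclusion is about the torsion class $\mathcal{T}$ itself, so you cannot invoke it as written (and there is no evident auxiliary torsion class on $\mathcal{T}$ to quotient by). The paper does \emph{not} route part~(1) through that proposition. Instead it works directly in $\mathcal{T}$ with the generators $\mathcal{S}(m)=\{Ae_i/J^me_i:i\in Q_0\}$ (not the simples $Be_i$), and runs the densely-stratified contradiction by hand: given a nonzero $\bar f:Ae_i/J^ke_i\to F$ with $F$ flat, one uses the finite-successor hypothesis to bound the maximal length $t$ of any path out of $i$, picks $m>k+t$, and shows $\Hom_A(Ae_i/J^ke_i,Ae_j/J^me_j)=0$ for every $j$, contradicting purity of an epimorphism from $\bigoplus (Ae_j/J^me_j)^{(H_j)}$. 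The vanishing of these Hom groups is precisely where homogeneity of $\rho$ and $t(A)=0$ enter: a nonzero map would be represented (after passing to a graded component) by a homogeneous $x\in e_iAe_j$ of degree $\leq t$ with $J^kx\subseteq J^me_j$, forcing $J^kx=0$ by degree count and hence $x=0$ since $t(A)=0$. Your sketch, built around the simples $Be_i$ and ``the presentation $0\to K\to\bigoplus Ae_j\to Be_i\to 0$'', does not supply this step and does not explain how the two special hypotheses are used; as it stands it would not go through.
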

The previous result provides an ample range of examples
of categories with no
nonzero flat objects. For instance, it applies to the category of
comodules of
a wide class of coalgebras as it is shown in corollary \ref{cor.comodules
without flat objects} (cf. \cite[Example 2.11]{CS}), and also to certain
quotient categories built up around a Cohen-Macaulay or an integrally closed
Noetherian integral domain (see corollary \ref{cor.covering of a variety}).
\section{Construction of locally finitely presented categories without flat
objects}\label{sect.construction of locally fp without flat}
All throughout these notes, the letter $\mathcal{G}$ denotes a
Grothendieck category.
\begin{defi} \label{defi.(densely) stratified}
Let $I$ be a downward directed preordered set which has not a
minimum. A set of generators $\mathcal{S}$ of $\mathcal{G}$ will be
called \emph{densely $I$-stratified} when it can be expressed as a
union $\mathcal{S}=\bigcup_{i\in I}\mathcal{S}_i$ satisfying the
following properties:
\begin{enumerate}
\item $\mathcal{S}_i\neq\emptyset$, for each $i\in I$ \item For each
$i\in I$ and each $X\in\mathcal{S}_i$, there are only finitely many
indices $j<i$ with the property that
$\Hom_\mathcal{G}(X,?)_{|\mathcal{S}_j}\neq 0$
\item For all $i, j\in I$ such that
$j<i$ and each $X\in\mathcal{S}_i$, there is an epimorphism
$X'\twoheadrightarrow X$ in $\mathcal{G}$, where $X'$ is a coproduct
of objects in $\bigcup_{k\leq j}\mathcal{S}_j$.
\end{enumerate}
We will simply say that $\mathcal{S}$ is \emph{densely stratified}
when it is densely $I$-stratified, for some downward directed
preordered set $I$ as above.
\end{defi}
\begin{defi} \label{defi.locally coherent category}
An object $X$ of $\mathcal{G}$ is called \emph{finitely presented}
when the functor
$\Hom_\mathcal{G}(X,?):\mathcal{G}\longrightarrow\text{Ab}$
preserves direct limits. The category $\mathcal{G}$ is called
\emph{locally finitely presented} when it has a set of finitely
presented generators and each object of $\mathcal{G}$ is a direct
limit of finitely presented objects.

(See \cite{S} for the next terms). If $\mathcal{G}$ is locally
finitely presented, then an epimorphism
$M\stackrel{p}{\twoheadrightarrow} N$ in $\mathcal{G}$ is called a
\emph{pure epimorphism} when the induced map
$\Hom_\mathcal{G}(X,p):\Hom_\mathcal{G}(X,M)\longrightarrow\Hom_\mathcal{G}(X,N)$
is surjective, for each finitely presented object $X$.

An object $F$ of $\mathcal{G}$ will be called \emph{flat} when every
epimorphism $M\twoheadrightarrow F$ is pure.
\end{defi}
\begin{rem}
If $\mathcal{G}$ is locally finitely presented, then the class
$fp(\mathcal{G})$ of its finitely presented objects is skeletally
small, i.e., the isoclasses of its objects form a set (see
\cite[Remark 1.9]{AR}).
\end{rem}
\begin{prop} \label{prop.no flats in densely stratified}
Let $\mathcal{G}$ be locally finitely presented and suppose that it
has a densely stratified set $\mathcal{S}$ of finitely presented
generators. Then the only flat object in $\mathcal{G}$ is the zero
object.
\end{prop}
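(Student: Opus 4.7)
The plan is to argue by contradiction: assume $F\neq 0$ is flat and construct an epimorphism $p:M\twoheadrightarrow F$ such that $\Hom_\mathcal{G}(X,M)=0$ for a suitable generator $X\in\mathcal{S}$ admitting a nonzero morphism to $F$. Flatness of $F$ forces $p$ to be pure, so $\Hom_\mathcal{G}(X,p)$ is surjective; combined with $\Hom_\mathcal{G}(X,M)=0$ this contradicts the existence of the chosen nonzero morphism.

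Since $\mathcal{S}$ generates $\mathcal{G}$ and $F\neq 0$, first pick $i_0\in I$, $X\in\mathcal{S}_{i_0}$ and a nonzero $f:X\to F$. Condition~(2) of Definition~\ref{defi.(densely) stratified} supplies a finite set $\{j_1,\dots,j_m\}$ of indices $j<i_0$ for which $\Hom_\mathcal{G}(X,?)_{|\mathcal{S}_j}\neq 0$; for every other index $j<i_0$ one has $\Hom_\mathcal{G}(X,Y)=0$ for all $Y\in\mathcal{S}_j$. Using downward directedness of $I$ on the finite set $\{i_0,j_1,\dots,j_m\}$, together with the hypothesis that $I$ has no minimum (which forces every element of $I$ to have something strictly below it), choose $j^{*}\in I$ strictly below $i_0$ and strictly below each $j_k$. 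Any $\ell\leq j^{*}$ then satisfies $\ell<i_0$ and $\ell\notin\{j_1,\dots,j_m\}$, whence $\Hom_\mathcal{G}(X,Y)=0$ for every $Y\in\mathcal{S}_\ell$.

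The next step is to build $p$ by refining the canonical epimorphism $L:=\bigoplus_{Z\in\mathcal{S}}Z^{(\Hom_\mathcal{G}(Z,F))}\twoheadrightarrow F$ (epimorphic because $\mathcal{S}$ generates). For each summand $Z\in\mathcal{S}_{i_Z}$ with $i_Z\not\leq j^{*}$, apply downward directedness and the no-minimum hypothesis to select some $j''\in I$ with $j''<i_Z$ and $j''\leq j^{*}$; then condition~(3) furnishes an epimorphism $Z'\twoheadrightarrow Z$ with $Z'$ a coproduct of objects in $\bigcup_{k\leq j''}\mathcal{S}_k\subseteq\bigcup_{k\leq j^{*}}\mathcal{S}_k$ (when $i_Z\leq j^{*}$ take $Z'=Z$). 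Assembling these refinements yields an epimorphism $p:M\twoheadrightarrow F$ whose source $M$ is a coproduct of objects all at levels $\leq j^{*}$. Since $X$ is finitely presented (hence finitely generated), any morphism $X\to M$ factors through a finite sub-coproduct, and each of its summands $Y$ satisfies $\Hom_\mathcal{G}(X,Y)=0$; therefore $\Hom_\mathcal{G}(X,M)=0$, as required.

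The main obstacle is the preorder bookkeeping: one must locate $j^{*}$ strictly below both $i_0$ and the finitely many ``bad'' indices $j_k$ detected by condition~(2), and then repeat the same downward-directed/no-minimum argument to apply condition~(3) to each summand of $L$. Once these selections are in place, the flatness hypothesis immediately delivers purity of $p$, and the contradiction is automatic; no further homological input is needed.
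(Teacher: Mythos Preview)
Your proof is correct and follows the same strategy as the paper's: use condition~(3) to refine an epimorphism onto $F$ so that its source is a coproduct of generators at levels where condition~(2) forces $\Hom_\mathcal{G}(X,-)$ to vanish, then invoke purity for the contradiction. Your version is in fact slightly more direct---you work with a single generator $X\in\mathcal{S}$ and a uniform cutoff $j^{*}$, whereas the paper starts from an arbitrary finitely presented $M$ (covered by a finite coproduct of generators) and chooses a separate cutoff $k(j)$ for each index $j$---but the core argument is identical.
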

\begin{proof}
Let $\mathcal{S}$ be densely $I$-stratified, where $I$ is a downward
directed set without minimum. Suppose that $F$ is a nonzero flat
object in $\mathcal{G}$. We then have an epimorphism
$p:\coprod_{j\in I}X_j\twoheadrightarrow F$, where $X_j$ is a
coproduct of objects in $\mathcal{S}_j$, for each $j\in I$. This
epimorphism is pure since $F$ is flat.

Let now $f:M\longrightarrow F$ be any nonzero morphism, where
$M$ is a finitely presented object in $\mathcal{G}$. There is a
finite subset $H\subset I$ together with an epimorphism
$q:\coprod_{i\in H}Y_i\twoheadrightarrow M$, where $Y_i$ is a finite
coproduct of objects of $\mathcal{S}_i$, for each $i\in H$. By
condition 2 in definition \ref{defi.(densely) stratified}, for each
$i\in H$, we know that the set
\begin{center}
$H'_i=\{j\in I:$ $j<i\text{ and
}\Hom_\mathcal{G}(Y_i,?)_{|\mathcal{S}_j}\neq 0\}$
\end{center}
is finite. Now the downward directed condition of $I$ and the
nonexistence of minimum imply that, for each $j\in I$ we can find an
element $k(j)\in I$ such that $k(j)<k$, for all $k\in(\bigcup_{i\in
H}H'_i)\cup\{j\}$. Now condition 3 of definition \ref{defi.(densely)
stratified} implies the existence of an epimorphism
$X'_j\twoheadrightarrow X_j$, where $X'_j$ is a coproduct of objects
in $\bigcup_{k\leq k(j)}\mathcal{S}_k$, for each $j\in I$. It
follows that we have an epimorphism $v:\coprod_{j\in
I}X'_j\twoheadrightarrow\coprod_{j\in I}X_j$ and, hence, another one
$p\circ v:\coprod_{j\in I}X'_j\twoheadrightarrow F$.

The epimorphism $p\circ v$ is pure since $F$ is flat and, as
consequence, there is a morphism $g:M\longrightarrow\coprod_{j\in
I}X'_j$ such that $f=p\circ v\circ g$. But then we have the morphism
$g\circ q:\coprod_{i\in H}Y_i\longrightarrow\coprod_{j\in I}X'_j$.
This morphism is necessarily zero since each $X'_j$ is a coproduct
of objects in $\bigcup_{k\leq k(j)}\mathcal{S}_k$, for all $j\in I$,
and $\Hom_\mathcal{G}(Y_i,?)_{|\mathcal{S}_k}=0$ for all such $k$.
Therefore $g\circ q=0$, which implies that $g=0$, and so $f=0$,
since $q$ is an epimorphism. We then get a contradiction with the
fact that $f\neq 0$.
\end{proof}
We now recall certain facts about localization of Grothendieck
categories (see \cite{Gabriel}, \cite{Sten}). If $\mathcal{T}$ is a
hereditary torsion class of $\mathcal{G}$ (i.e. $\mathcal{T}$ is
closed under taking subobjects, quotients, extensions and coproducts
in $\mathcal{G}$) then the inclusion functor
$i:\mathcal{T}\hookrightarrow\mathcal{G}$ has a right adjoint
$t:\mathcal{G}\longrightarrow\mathcal{T}$, and hence the unit
$1_\mathcal{T}\longrightarrow t\circ i$ of this adjunction is an
isomorphism. It follows that $(i\circ t)^2$ is naturally isomorphic
to $i\circ t$. For simplification purposes, we will abuse of
notation and denote also by $t$ the composition $i\circ t$. Then we
interpret $t$ as an idempotent functor
$t:\mathcal{G}\longrightarrow\mathcal{G}$, actually a subfunctor of
the identity, called the \emph{torsion radical} associated to
$\mathcal{T}$. For each object $M$ of $\mathcal{G}$, the subobject
$t(M)$ is the largest one which is in $\mathcal{T}$. Then $M$ fits
into an exact sequence
\begin{center}
$0\rightarrow t(M)\stackrel{incl}{\longrightarrow} M\longrightarrow
M/t(M)\rightarrow 0$,
\end{center}
which is natural on $M$. We will denote by $\mathcal{T}^\perp$ the
class of objects $Y$ such that $\Hom_\mathcal{G}(T,Y)=0$, for all
$T\in\mathcal{T}$. Then $M/t(M)$ is in $\mathcal{T}^\perp$.

There is a Grothendieck category $\mathcal{G}/\mathcal{T}$, called
the \emph{quotient category of $\mathcal{G}$ modulo $\mathcal{T}$},
together with functor
$q:\mathcal{G}\longrightarrow\mathcal{G}/\mathcal{T}$, called the
\emph{quotient functor}, which is universal with respect to each one
of the following properties:
\begin{enumerate}
\item It is an additive functor, with domain $\mathcal{G}$ and
codomain another Grothendieck category, which vanishes on all
objects of $\mathcal{T}$. \item It is an additive functor, with
domain $\mathcal{G}$ and codomain another Grothendieck category,
which takes each morphism with kernel and cokernel in $\mathcal{T}$
to an isomorphism.
\end{enumerate}
In such case, it is well-known that $q$ is exact and has a right
adjoint $\iota :\mathcal{G}/\mathcal{T}\longrightarrow\mathcal{G}$,
usually called the \emph{section functor}, which is fully faithful
(equivalently, the counit of the adjunction, $\epsilon:
q\circ\iota\longrightarrow 1_{\mathcal{G}/\mathcal{T}}$, is an
isomorphism). Moreover, if $\mu
:1_\mathcal{G}\longrightarrow\iota\circ q$ is the unit of this
adjunction, then each object $M\in\mathcal{G}$ fits into an exact
sequence
\begin{center}
$0\rightarrow
t(M)\stackrel{incl}{\longrightarrow}M\stackrel{\mu_M}{\longrightarrow}(\iota\circ
q)(M)\longrightarrow T_M\rightarrow 0$,
\end{center}
where $T_M\in\mathcal{T}$.

From the last exact sequence it easily follows that $M$ is in the
essential image of $\iota$ if, and only if,
$\Hom_\mathcal{G}(T,M)=0=\Ext_\mathcal{G}^1(T,M)$ for all
$T\in\mathcal{T}$. In the sequel, we shall freely identify
$\mathcal{G}/\mathcal{T}$ with $\Im (\iota )$ whenever necessary,
looking at $\mathcal{G}/\mathcal{T}$ in that way as a full
subcategory of $\mathcal{G}$.

The following result will be very useful later on:
\begin{prop} \label{prop.caracterization of Giraud subcategories}
Let $j:\mathcal{G}'\longrightarrow\mathcal{G}$ be an additive
functor between Grothendieck categories. Suppose that it has left
adjoint $p:\mathcal{G}\longrightarrow\mathcal{G}'$ which is exact
and such that the counit $\epsilon :p\circ j\longrightarrow
1_{\mathcal{G}'}$ is an isomorphism. Then the following assertions
hold:
\begin{enumerate}
\item $\mathcal{T}:=\text{Ker}(p)$ is a hereditary torsion class in
$\mathcal{G}$ \item The induced additive functor
$\mathcal{G}/\mathcal{T}\longrightarrow\mathcal{G}'$ is an
equivalence of categories.
\end{enumerate}
\end{prop}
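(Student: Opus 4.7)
The plan is to establish (1) directly from the hypotheses on $p$ and then, for (2), to use the universal property of the Gabriel quotient to factor $p$ and to exhibit an explicit quasi-inverse to the induced functor.

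For (1), I would verify the four closure properties of a hereditary torsion class for $\mathcal{T}=\Ker(p)$. Closure under subobjects and quotients is immediate from exactness of $p$: given a short exact sequence $0\to M'\to M\to M''\to 0$ with $p(M)=0$, exactness forces $p(M')=0=p(M'')$. Closure under extensions follows from the same short exact sequence (if the outer terms map to $0$, so does the middle), and closure under coproducts follows from the fact that $p$, as a left adjoint, preserves coproducts.

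For (2), I would first observe that $j$ is fully faithful because the counit $\epsilon$ is an isomorphism. Since $p$ is exact and vanishes on $\mathcal{T}$, the universal property of the quotient functor $q:\mathcal{G}\longrightarrow\mathcal{G}/\mathcal{T}$ (clause (1) of the universal properties recalled before the proposition) produces a unique additive functor $\bar{p}:\mathcal{G}/\mathcal{T}\longrightarrow\mathcal{G}'$ such that $\bar{p}\circ q\cong p$. This is the functor that will be shown to be an equivalence.

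My candidate quasi-inverse is $q\circ j:\mathcal{G}'\longrightarrow\mathcal{G}/\mathcal{T}$. One direction is immediate: $\bar{p}\circ(q\circ j)\cong(\bar{p}\circ q)\circ j\cong p\circ j\cong 1_{\mathcal{G}'}$. For the other direction, let $\eta:1_\mathcal{G}\longrightarrow j\circ p$ denote the unit of the adjunction $p\dashv j$. The triangle identity $\epsilon_{p(M)}\circ p(\eta_M)=1_{p(M)}$, combined with $\epsilon$ being an isomorphism, forces $p(\eta_M)$ to be an isomorphism for every $M\in\mathcal{G}$. Exactness of $p$ then places both the kernel and the cokernel of $\eta_M:M\longrightarrow j(p(M))$ in $\mathcal{T}=\Ker(p)$, so $q(\eta_M)$ is an isomorphism. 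This yields a natural isomorphism $q\cong q\circ j\circ p\cong(q\circ j)\circ\bar{p}\circ q$, and since every object of $\mathcal{G}/\mathcal{T}$ is (isomorphic to) $q(M)$ for some $M$, the isomorphism descends to $1_{\mathcal{G}/\mathcal{T}}\cong(q\circ j)\circ\bar{p}$. The main obstacle is precisely this triangle-identity step that forces $\eta_M$ to have kernel and cokernel in $\mathcal{T}$; everything else is purely formal bookkeeping with adjunctions and the universal property of $q$.
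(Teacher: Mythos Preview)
Your proof is correct. Part (1) is handled exactly as in the paper. For part (2) you take a somewhat different route: the paper identifies $\mathcal{G}/\mathcal{T}$ with $\Im(\iota)\subseteq\mathcal{G}$ and $\mathcal{G}'$ with $\Im(j)\subseteq\mathcal{G}$, and then proves the containment $\Im(\iota)\subseteq\Im(j)$ by showing that $\eta_M$ is an actual isomorphism whenever $M\in\Im(\iota)$, which requires invoking the characterization of $\Im(\iota)$ via $\Hom_\mathcal{G}(T,M)=0=\Ext^1_\mathcal{G}(T,M)$ and a short splitting argument. You instead stay at the level of functors: from the same observation that $\Ker(\eta_M),\text{Coker}(\eta_M)\in\mathcal{T}$ you conclude only that $q(\eta_M)$ is an isomorphism for \emph{every} $M$, and deduce directly that $q\circ j$ is quasi-inverse to $\bar{p}$. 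Your argument is a bit more economical, since it never needs the $\Ext^1$ description of $\Im(\iota)$; the paper's argument, on the other hand, yields the extra information $\Im(\iota)\subseteq\Im(j)$ as a byproduct.

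One small point: your final ``descends'' step deserves one more line. Essential surjectivity of $q$ alone does not guarantee that a natural isomorphism $q\cong (q\circ j)\circ\bar{p}\circ q$ induces one between $1_{\mathcal{G}/\mathcal{T}}$ and $(q\circ j)\circ\bar{p}$; you should whisker with the section functor $\iota$ and use $q\circ\iota\cong 1_{\mathcal{G}/\mathcal{T}}$ to obtain the desired natural isomorphism. This is routine, but the phrase ``every object is $q(M)$'' is not quite the right justification.
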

\begin{proof}
1) The functor $p$ preserves coproducts since it is left adjoint. It
follows that $\mathcal{T}$ is closed under taking coproducts. On the
other hand, the exactness of $p$ implies that, given an exact
sequence $0\rightarrow L\longrightarrow M\longrightarrow
N\rightarrow 0$ in $\mathcal{G}$, one has that $M\in\mathcal{T}$ if
and only if $L$ and $N$ are in $\mathcal{T}$. Therefore
$\mathcal{T}$ is a hereditary torsion class in $\mathcal{G}$.

2) By usual properties of adjunctions, we know that $j$ is fully
faithful and,
hence, we can identify $\mathcal{G}'$ with its essential image by
$j$ and view it as a full subcategory of $\mathcal{G}$.

The universal property of the quotient category gives an induced
functor $\psi:\mathcal{G}/\mathcal{T}\longrightarrow\mathcal{G}'$
such that $\psi\circ q\cong p$, where
$q:\mathcal{G}\longrightarrow\mathcal{G}/\mathcal{T}$. Actually
$\psi$ acts on objects by taking $q(M)\rightsquigarrow p(M)$, for
each $M\in\mathcal{G}$. It immediately follows that $\psi$ is dense
(=representative), because, due to the fact that $\epsilon$ is an
isomorphism, the functor
$p$ is dense. Let us note now that if we identify $\mathcal{G}/\mathcal{T}$
with its image by the section functor $\iota
:\mathcal{G}/\mathcal{T}\longrightarrow\mathcal{G}$, then the
functor $\psi$ is just the restriction of $p$ to $Im(\iota )$.
Indeed, if $Y\in Im(\iota )$ then we have $q(Y)=Y$ and so $\psi
(Y)=p(Y)$. As a consequence, we know that the restriction of $p$ to
$Im(\iota )$ is a dense functor.

We shall prove that $Im(\iota )\subseteq\mathcal{G}'=Im(j)$, and
from this and the preceding paragraph assertion 2 will follow
since the restriction of $p$ to $Im(j)$ is the identity (recall that
$\epsilon$ is an isomorphism!).
To see
that $Im(\iota )\subseteq\mathcal{G}'$, we use the adjunction
equations. If $\eta :1_{\mathcal{G}}\longrightarrow j\circ p$ is the
unit of the adjunction $p\vdash j$, then
$p(\eta_M):p(M)\longrightarrow (p\circ j\circ p)(M)$ is an
isomorphism, for each $M\in\mathcal{G}$. Due to the exactness of $p$
this is equivalent to saying that $\Ker(\eta_M)$ and $Im(\eta_M)$ are
objects of $\mathcal{T}$. But if $M\in Im(\iota )$ then
$\Hom_\mathcal{G}(T,M)=0=\Ext_\mathcal{G}^1(T,M)$, for all
$T\in\mathcal{T}$, from which it follows $Ker(\eta_M)=0$ and, hence,
that $\eta_M$ is a section (=split monomorphism) in $\mathcal{G}$.
But $\Hom_\mathcal{G}(T,(j\circ
p)(M))\cong\Hom_{\mathcal{G}'}(p(T),p(M))=\Hom_{\mathcal{G}'}(0,p(M))=0$,
for all $T\in\mathcal{T}$. It follows that any morphism
$\text{Coker}(\eta_M)\longrightarrow (j\circ p)(M)$ is the zero
morphism, and therefore $\eta_M$ is an isomorphism. It follows that
$M\in Im(j)=\mathcal{G}'$, and hence $Im(\iota
)\subseteq\mathcal{G}'$.
\end{proof}
Recall that if $\mathcal{G}$ is locally finitely presented, then the
hereditary torsion class $\mathcal{T}$ is called of \emph{finite
type} when the section functor $\iota
:\mathcal{G}/\mathcal{T}\longrightarrow\mathcal{G}$ preserves direct
limits or, equivalently, when $\text{Im}(\iota )$ is closed under
taking direct limits in $\mathcal{G}$.

The following result seems to be folklore and it can be derived from
\cite{Rump2} (see also \cite[Proposition
A5]{Krause} for a particular case). We include here a short proof for the sake
of completeness.
\begin{prop} \label{prop.quotient of locally fp is locally fp}
Let $\mathcal{G}$ be a locally finitely presented Grothendieck
category, let $\mathcal{S}$ be a set of finitely presented
generators and let $\mathcal{T}$ be a hereditary torsion class of
finite type. Then $\mathcal{G}/\mathcal{T}$ is locally finitely
presented, the quotient functor
$q:\mathcal{G}\longrightarrow\mathcal{G}/\mathcal{T}$ preserves
finitely presented objects and $q(\mathcal{S})$ is a set of
(finitely presented) generators of $\mathcal{G}/\mathcal{T}$.
\end{prop}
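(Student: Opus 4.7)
The plan is to unpack the three assertions in the order: (a) $q$ preserves finitely presented objects, (b) every object of $\mathcal{G}/\mathcal{T}$ is a direct limit of finitely presented objects, (c) $q(\mathcal{S})$ is a set of generators. The engine behind everything is the adjunction $q\dashv\iota$ together with the finite type hypothesis on $\mathcal{T}$, which precisely says that $\iota$ commutes with direct limits.

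For (a), let $X\in\mathcal{G}$ be finitely presented and let $(Y_i)_{i\in I}$ be a direct system in $\mathcal{G}/\mathcal{T}$. Using the counit isomorphism $q\circ\iota\cong 1_{\mathcal{G}/\mathcal{T}}$, the adjunction, the hypothesis that $\iota$ preserves direct limits, and finally the finite presentation of $X$ in $\mathcal{G}$, one computes
\begin{align*}
\Hom_{\mathcal{G}/\mathcal{T}}(q(X),\varinjlim Y_i) &\cong \Hom_\mathcal{G}(X,\iota\varinjlim Y_i) \cong \Hom_\mathcal{G}(X,\varinjlim\iota Y_i) \\
&\cong \varinjlim\Hom_\mathcal{G}(X,\iota Y_i) \cong \varinjlim\Hom_{\mathcal{G}/\mathcal{T}}(q(X),Y_i),
\end{align*}
so $q(X)$ is finitely presented in $\mathcal{G}/\mathcal{T}$. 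This is the step where the finite type hypothesis is essential, and it is the one I expect to be the real content of the proof; everything else is formal.

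For (b), given $Y\in\mathcal{G}/\mathcal{T}$, write $\iota(Y)=\varinjlim M_\lambda$ with each $M_\lambda$ finitely presented in $\mathcal{G}$ (possible since $\mathcal{G}$ is locally finitely presented). Apply $q$; since $q$ is a left adjoint it preserves direct limits, and since the counit is an isomorphism one has $Y\cong q(\iota Y)=q(\varinjlim M_\lambda)=\varinjlim q(M_\lambda)$. By (a) each $q(M_\lambda)$ is finitely presented in $\mathcal{G}/\mathcal{T}$, which exhibits $Y$ as a direct limit of finitely presented objects.

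For (c), pick any $N\in\mathcal{G}/\mathcal{T}$. Since $\mathcal{S}$ generates $\mathcal{G}$, there is an epimorphism $\coprod_\alpha X_\alpha\twoheadrightarrow\iota(N)$ in $\mathcal{G}$ with $X_\alpha\in\mathcal{S}$. Applying the exact, coproduct-preserving functor $q$ yields an epimorphism $\coprod_\alpha q(X_\alpha)\twoheadrightarrow q(\iota(N))\cong N$. Hence $q(\mathcal{S})$ generates $\mathcal{G}/\mathcal{T}$. Combining (a), (b), (c) with the obvious fact that $q(\mathcal{S})$ is a set (being the image of a set) gives all three conclusions.
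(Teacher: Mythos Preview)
Your proof is correct and follows essentially the same route as the paper's own argument: the same chain of adjunction isomorphisms for (a), the same use of $q$ preserving direct limits together with the counit isomorphism for (b), and the same application of $q$ to an epimorphism from a coproduct of generators for (c). The only cosmetic difference is that you take $\iota(Y)$ as the explicit preimage of $Y$ in parts (b) and (c), whereas the paper simply says every object is of the form $q(M)$; these are equivalent via the counit.
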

\begin{proof}
Let $X$ be a finitely presented object of $\mathcal{G}$ and let
$(Y_i)_{i\in I}$ a direct system in $\mathcal{G}/\mathcal{T}$. Using
the adjunction $q\vdash\iota$ and the fact that $\mathcal{T}$ is of
finite type, we get a sequence of isomorphisms
\begin{center}
$\varinjlim\Hom_{\mathcal{G}/\mathcal{T}}(q(X),Y_i)\cong
\varinjlim\Hom_{\mathcal{G}}(X,\iota
(Y_i))\cong\Hom_{\mathcal{G}}(X,\varinjlim\iota
(Y_i))\cong\Hom_{\mathcal{G}}(X,\iota (\varinjlim
(Y_i)))\cong\Hom_{\mathcal{G}/\mathcal{T}}(q(X),\varinjlim Y_i)$,
\end{center}
whose composition coincides with the canonical morphism
$$\varinjlim\Hom_{\mathcal{G}/\mathcal{T}}(q(X),Y_i)\longrightarrow
\Hom_{\mathcal{G}/\mathcal{T}}(q(X),\varinjlim Y_i).$$ It follows
that $q(X)$ is a finitely presented object of
$\mathcal{G}/\mathcal{T}$. Therefore $q$ preserves finitely
presented objects.

On the other hand, each object of $\mathcal{G}/\mathcal{T}$ is of
the form $q(M)$, with $M\in\mathcal{G}$. If we put $M=\varinjlim
X_i$, with the $X_i$ finitely presented, then we get
$q(M)=\varinjlim q(X_i)$ since the functor $q$ preserves direct
limits. It follows that each object of $\mathcal{G}/\mathcal{T}$ is
a direct limit of finitely presented objects.

Finally, for $M$ as above, we have an epimorphism $\coprod_{j\in
J}S_j\twoheadrightarrow M$ in $\mathcal{G}$, where the $S_j$ are in
$\mathcal{S}$. Then we get an epimorphism $\coprod_{j\in
J}q(S_j)\twoheadrightarrow q(M)$ in $\mathcal{G}/\mathcal{T}$. This
shows that $q(\mathcal{S})$ is a set of finitely presented
generators of $\mathcal{G}/\mathcal{T}$ and, hence, that
$\mathcal{G}/\mathcal{T}$ is locally finitely presented.
\end{proof}
Recall that if $\mathcal{X}$ is any set of objects in $\mathcal{G}$
and $M$ is another object, then the \emph{trace} of $\mathcal{X}$ on
$M$ is the sum of all the images of morphisms $f:X\longrightarrow
M$, with $X\in\mathcal{X}$. Given a hereditary torsion class
$\mathcal{T}$ in $\mathcal{G}$, with torsion radical $t$, and a
class $\mathcal{X}$ of objects in $\mathcal{G}$, we shall denote by
$\bar{\mathcal{X}}$ the class of objects $X/t(X)$, with
$X\in\mathcal{X}$. Also, given an object $X\in\mathcal{G}$, we
denote by $\mathcal{T}_X$ the class of objects $T\in\mathcal{T}$
which are epimorphic image of $X$.
\begin{prop} \label{prop.locally coherent without flats}
Let $I$ be a downward directed preordered set without minimum and
let $\mathcal{G}$ be a locally finitely presented Grothendieck
category admitting a set $\mathcal{S}$ of finitely presented
generators decomposed as a union $\mathcal{S}=\bigcup_{i\in
I}\mathcal{S}_i$ of nonempty subsets. Let us denote by $tr_j(M)$ the
trace of $\bigcup_{k\leq j}\mathcal{S}_k$ in $M$, for each
$M\in\mathcal{G}$ and each $j\in I$. Suppose that $\mathcal{T}$ is a
hereditary torsion class of finite type, with torsion radical $t$,
satisfying the following conditions:
\begin{enumerate}
\item For each $i\in I$ and each $X\in\mathcal{S}_i$, the set of
indices $j<i$ such that either
$\Hom_A(X,?)_{|\bar{\mathcal{S}}_j}\neq 0$ or
$\Ext_\mathcal{G}^1(?,?)_{|\mathcal{T}_X\times\bar{\mathcal{S}}_j}\neq
0$
is finite.
\item For each pair of indices $j<i$ in $I$ and each
$X\in\mathcal{S}_i$, one has that $X/tr_j(X)\in\mathcal{T}$.
\end{enumerate}
Then $\mathcal{G}/\mathcal{T}$ is locally finitely presented and
contains no nonzero flat object.
\end{prop}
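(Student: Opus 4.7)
The plan is to apply Proposition \ref{prop.no flats in densely stratified} to $\mathcal{G}/\mathcal{T}$, taking the set of generators $q(\mathcal{S})=\bigcup_{i\in I}q(\mathcal{S}_i)$ supplied by Proposition \ref{prop.quotient of locally fp is locally fp}. That proposition already delivers both that $\mathcal{G}/\mathcal{T}$ is locally finitely presented and that $q(\mathcal{S})$ consists of finitely presented generators; condition~(1) of Definition \ref{defi.(densely) stratified} for the stratification $q(\mathcal{S})=\bigcup_i q(\mathcal{S}_i)$ is immediate from the nonemptiness of each $\mathcal{S}_i$.

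Condition~(3) of the densely stratified definition falls out of hypothesis~(2). For $j<i$ and $X\in\mathcal{S}_i$, the trace $tr_j(X)$ is, by construction, the image of an epimorphism $\coprod_\alpha X_\alpha\twoheadrightarrow tr_j(X)$ with each $X_\alpha\in\bigcup_{k\leq j}\mathcal{S}_k$; and since $X/tr_j(X)\in\mathcal{T}$, the induced map $q(tr_j(X))\to q(X)$ is an isomorphism in $\mathcal{G}/\mathcal{T}$. Composing gives the required epimorphism onto $q(X)$ from a coproduct of objects of $\bigcup_{k\leq j}q(\mathcal{S}_k)$.

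The main obstacle is condition~(2), which must be extracted from hypothesis~(1). Using the adjunction isomorphism $\Hom_{\mathcal{G}/\mathcal{T}}(q(X),q(Y))\cong\Hom_\mathcal{G}(X,\iota q(Y))$ together with the canonical short exact sequence $0\to Y/t(Y)\to\iota q(Y)\to T_Y\to 0$ (the second half of the four-term sequence recalled earlier in the paper, with $T_Y\in\mathcal{T}$), I would argue that for $j<i$ lying outside the finite exceptional set of hypothesis~(1) and any $Y\in\mathcal{S}_j$, one has $\Hom_\mathcal{G}(X,\iota q(Y))=0$. To this end, take an arbitrary $f\colon X\to\iota q(Y)$; the composite $X\to\iota q(Y)\to T_Y$ has image $T$, which is torsion (as a subobject of $T_Y$) and an epimorphic image of $X$, so $T\in\mathcal{T}_X$. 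Pull back the short exact sequence along $T\hookrightarrow T_Y$ to obtain $0\to Y/t(Y)\to E\to T\to 0$. By hypothesis~(1), $\Ext_\mathcal{G}^1(T,Y/t(Y))=0$, so this pullback splits, giving $E\cong Y/t(Y)\oplus T$; the morphism $f$ factors through $E$, and its $Y/t(Y)$-component vanishes because $\Hom_\mathcal{G}(X,Y/t(Y))=0$ (again by hypothesis~(1)). Thus $f$ factors as $X\to T\to E\hookrightarrow\iota q(Y)$. But $\iota q(Y)\in\mathcal{T}^\perp$, so any morphism from the torsion object $T$ into $\iota q(Y)$ is zero, and therefore $f=0$. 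With all three conditions of Definition \ref{defi.(densely) stratified} verified for $q(\mathcal{S})$, Proposition \ref{prop.no flats in densely stratified} closes the argument.
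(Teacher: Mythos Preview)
Your proposal is correct and follows essentially the same approach as the paper: reduce to Proposition~\ref{prop.no flats in densely stratified} by showing that $q(\mathcal{S})=\bigcup_i q(\mathcal{S}_i)$ is densely $I$-stratified, handling condition~(3) via the trace and hypothesis~(2), and condition~(2) via the exact sequence $0\to Y/t(Y)\to\iota q(Y)\to T_Y\to 0$ together with hypothesis~(1). The only cosmetic difference is in the verification of condition~(2): the paper argues directly that $\Hom_\mathcal{G}(X,T_Y)=0$ by lifting the inclusion $\mathrm{Im}(f)\hookrightarrow T_Y$ along $\iota q(Y)\twoheadrightarrow T_Y$ (using the long exact sequence and the $\Ext^1$-vanishing) and then invoking $\iota q(Y)\in\mathcal{T}^\perp$, whereas you phrase the same step as pulling back along $T\hookrightarrow T_Y$ and splitting; these are equivalent formulations of the same idea.
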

\begin{proof}
By proposition \ref{prop.quotient of locally fp is locally fp},
$\mathcal{G}/\mathcal{T}$ is locally finitely presented and
$q(\mathcal{S})$ is a set of finitely presented generators of
$\mathcal{G}/\mathcal{T}$.

We shall prove that $q(\mathcal{S})=\bigcup_{i\in
I}q(\mathcal{S}_i)$ is a densely $I$-stratified set of generators
of $\mathcal{G}/\mathcal{T}$ and the result will follow from
proposition \ref{prop.no flats in densely stratified}. By condition
$1$, given an index $i\in I$ and an object $X\in\mathcal{S}_i$, the
set $I_X$ of indices $j<i$ such that
$\Hom_\mathcal{G}(X,?)_{|\bar{\mathcal{S}}_j}=
0=\Ext_\mathcal{G}^1(?,?)_{|\mathcal{T}_X\times\bar{\mathcal{S}}_j}$
is cofinite within the set of indices $j\in I$ such that $j<i$. We
claim that if $j\in I_X$, then
$\Hom_{\mathcal{G}/\mathcal{T}}(q(X),q(Y))=0$, for all
$Y\in\mathcal{S}_j$. To see that, take $Y\in\mathcal{S}_j$ and apply
the functor $\Hom_\mathcal{G}(X,?)$ to the exact sequence
\begin{center}
$0\rightarrow Y/t(Y)\stackrel{\mu}{\longrightarrow} (\iota\circ
q)(Y)\stackrel{p}{\longrightarrow}T\rightarrow 0$
\end{center}
where $\mu$ comes from the unit map of the adjunction $q\vdash\iota
$. We know that $T\in\mathcal{T}$. If now $f:X\longrightarrow T$ is
any morphism, then $Im(f)\in\mathcal{T}_X$ and hence
$\Ext_\mathcal{G}^1(Im(f),Y/t(Y))=0$. But then the inclusion
$\text{Im}(f)\hookrightarrow T$ factors through $p$. This implies
that $\text{Im}(f)=0$ since
$\text{Hom}_\mathcal{G}(\text{Im}(f),(\iota\circ q)(Y))=0$.
Therefore we have $\Hom_\mathcal{G}(X,T)=0$ and, hence, we also have
$0=\Hom_\mathcal{G}(X,(\iota\circ
q)(Y))\cong\Hom_{\mathcal{G}/\mathcal{T}}(q(X),q(Y))$ because
$\Hom_\mathcal{G}(X,Y/t(Y))=0$ by hypothesis.

The last paragraph shows that, given $i\in I$ and
$X\in\mathcal{S}_i$, the set of indices $j<i$ such that
$\Hom_{\mathcal{G}/\mathcal{T}}(q(X),?)_{|q(\mathcal{S}_j)}\neq 0$
is finite. Therefore $q(\mathcal{S})$ satisfies condition $2$ of
definition \ref{defi.(densely) stratified}.

We now prove that $q(\mathcal{S})$ satisfies condition 3 of in that
definition. Indeed if $j<i$ then, for each $X\in\mathcal{S}_i$, we
consider the canonical morphism in $\mathcal{G}$
\begin{center}
$f:\coprod_{S\in\bigcup_{k\leq
j}\mathcal{S}_k}S^{(\Hom_\mathcal{G}(S,X))}\longrightarrow X$,
\end{center}
whose image is precisely $tr_j(X)$. By condition 2, we know that
$Coker(f)\in\mathcal{T}$. But then the induced morphism in
$\mathcal{G}/\mathcal{T}$
\begin{center}
$\coprod_{S\in\bigcup_{k\leq
j}\mathcal{S}_k}q(S)^{(\Hom_\mathcal{G}(S,X))}\cong
q(\coprod_{S\in\bigcup_{k\leq
j}\mathcal{S}_k}S^{(\Hom_\mathcal{G}(S,X))})\stackrel{q(f)}{\longrightarrow}
q(X)$,
\end{center}
is an epimorphism, because $q$ is exact and vanishes on
$\mathcal{T}$.
\end{proof}
\section{Quotient categories of graded modules without flat
objects}\label{section.Quotient categories of graded modules without flat
objects}
All throughout this section $A=\oplus_{n\geq 0}A_n$ is a positively
graded ring. Further hypotheses will be imposed on it when stating
the main results. We denote by $A-Gr$ the category of
($\mathbb{Z}$-)graded $A$-modules. It is a Grothendieck category. For
each $n\in\mathbb{Z}$, there is a canonical automophism
$?[n]:A-Gr\stackrel{\cong}{\longrightarrow}A-Gr$, called the
\emph{$n$-th shift}. For each $M\in A-Gr$, $M[n]$ is the graded
$A$-module with the same underlying (ungraded) $A$-module as $M$,
but with grading given by $M[n]_i=M_{n+i}$. It is well-known (cf.
\cite{NasFred}) that $A-Gr$ is locally finitely presented, with
$\{A[n]:$ $n\in\mathbb{Z}\}$ as a set of finitely generated
projective generators.

We will use some standard facts about $\Hom$ and $\Ext$ for graded modules
(see, e.g., \cite[Chapter I]{NasFred}). If $M$ and $N$ are graded
$A$-modules and we put
$\text{HOM}_A(M,N)=\oplus_{n\in\mathbb{Z}}\Hom_{A-Gr}(M,N[n])$, then
$\text{HOM}_A(M,N)$ is a graded abelian group with
$\text{HOM}_A(M,N)_n=\Hom_{A-Gr}(M,N[n])$, for each
$n\in\mathbb{Z}$. Then one gets a covariant functor
$\text{HOM}_A(M,?):A-Gr\longrightarrow\mathbb{Z}-Gr$. Its right
derived functors are denoted by $\text{EXT}_R^i(M,?)$ ($i\geq 0$).
It is immediate to see that
$\text{EXT}_A^i(M,N)=\oplus_{n\in\mathbb{Z}}\Ext_{A-Gr}^i(M,N[n])\cong\oplus_{n\in\mathbb{Z}}\Ext_{A-Gr}^i(M[-n],N)$,
which is a graded abelian group with
$\text{EXT}_A^i(M,N)_n=\Ext_{A-Gr}^i(M,N[n])\cong\Ext_{A-Gr}^i(M[-n],N)$,
for each $n\in\mathbb{Z}$.

On the other hand, when one forgets the grading, $\text{HOM}_A(M,N)$
is a subgroup of $\Hom_A(M,N)$ which coincides with it in case $M$
is finitely generated (see \cite[Corollary I.2.11]{NasFred}). If, in
addition, $M$ has a projective resolution
\begin{center}
$\ldots\to P^{-n}\rightarrow \ldots\rightarrow P^{-1}\rightarrow
P^0\rightarrow M\rightarrow 0$
\end{center}
such that $P^{-i}$ is finitely generated for $i=0,1,\ldots,r$, then
the two compositions of functors
\begin{center}
$A-Gr\xrightarrow{\text{forgetful}}A-Mod\xrightarrow{\Ext_A^i(M,?)}\mathbb{Z}
-\text{Mod}$
$A-Gr\xrightarrow{\text{EXT}_A^i(M,?)}\mathbb{Z}-Gr\xrightarrow{
\text { forgetful}}\mathbb{Z}-\text{Mod}$
\end{center}
are naturally isomorphic, for all $0\leq i<r$.

If $A=\oplus_{n\geq 0}A_n$ is any positively graded ring, then we
have a homomorphism of graded rings $\pi:A\longrightarrow A_0$
mapping each $a\in A$ onto its $0$-homogeneous component $a_0$ (here
we are viewing $A_0$ as a graded ring concentrated in degree $0$).
Then each $A_0$-module, and in particular $A_0$ itself, becomes
canonically a graded $A$-module concentrated in degree $0$.

Recall that a $T\in A-Gr$ is called \emph{finitely graded} when
$T_n=0$, for all but finitely many integers $n$. We shall say that
$T$ is \emph{virtually finitely graded} when each finitely generated
(or cyclic) graded submodule of $T$ is finitely graded. All
throughout this section $\mathcal{T}$ will denote the class of
virtually finitely graded modules. Recall (cf. \cite{NasFred}) that a
hereditary torsion theory in $A-Gr$ is called \emph{rigid} when
$T[n]\in\mathcal{T}$, for all $T\in\mathcal{T}$ and
$n\in\mathbb{Z}$.
\begin{lema} \label{lem.description of Im(i)}
Suppose that there is an integer $k>0$ such that $A_{\geq
m}:=\oplus_{n\geq m}A_n$ is finitely generated as left ideal, for
all $m\geq k$. The class $\mathcal{T}$ is a rigid hereditary
torsion class in $A-Gr$. Moreover, if $A_{\geq 1}:=\oplus_{n\geq
m}A_1$ is finitely presented as a left ideal and $\iota
:A-Gr/\mathcal{T}\longrightarrow A-Gr$ is the section functor, then:
\begin{enumerate}
\item $\Im (\iota )$ consists of the graded
$A$-modules $Y$ such that $\Hom_A(A_0,Y)=0=\Ext_A^1(A_0,Y)$
\item $\mathcal{T}$ is of finite type.
\end{enumerate}
\end{lema}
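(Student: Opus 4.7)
The plan is to first verify the basic torsion-theoretic properties of $\mathcal{T}$, then prove (1) by reducing the full vanishing condition that characterizes $\Im(\iota)$ to the single generator $A_0$, and finally derive (2) from (1) combined with the $2$-presentability of $A_0$ in $A-Gr$.

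Closure of $\mathcal{T}$ under subobjects, quotients, coproducts, and shifts is routine: for each such operation a cyclic graded submodule of the output arises from a cyclic graded submodule (or a finite sum of such) of a module in $\mathcal{T}$, and shifting does not change the cardinality of the homogeneous support. The one step requiring the hypothesis is closure under extensions $0\to T'\to T\to T''\to 0$. Given $x\in T$ homogeneous of degree $d$, pick $N\geq k$ with $A_{\geq N}\bar{x}=0$ in $T''$; then $A_{\geq N}=Aa_1+\cdots+Aa_r$ with $a_i$ homogeneous, and the cyclic modules $Aa_ix\subseteq T'$ are all finitely graded. Hence the finite sum $A_{\geq N}x=\sum Aa_ix$ is finitely graded, so $A_{\geq L}x=0$ for some $L\geq N$, making $Ax$ finitely graded.

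For (1), I use the standard characterization $\Im(\iota)=\{Y:\Hom(T,Y)=0=\Ext^1(T,Y)\text{ for every }T\in\mathcal{T}\}$. Since $A_0$ and $A_{\geq 1}$ are finitely presented, $\Hom_A(A_0,Y)=\oplus_n\Hom_{A-Gr}(A_0,Y[n])$ and $\Ext_A^1(A_0,Y)=\oplus_n\Ext_{A-Gr}^1(A_0,Y[n])$, so the condition in (1) reads $\Hom_{A-Gr}(A_0[n],Y)=0=\Ext_{A-Gr}^1(A_0[n],Y)$ for every $n\in\mathbb{Z}$; necessity is immediate because $A_0[n]\in\mathcal{T}$. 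For sufficiency, I first handle $\Hom$: given $f:T\to Y$ and a homogeneous $x\in T$, I induct on the minimal $M$ with $A_{\geq M}x=0$. The base $M=1$ places $f(x)$ in the $A_{\geq 1}$-torsion of $Y$, which is zero; for $M\geq 2$, applying the induction hypothesis to every $ax$ with $a\in A_{\geq 1}$ (where $A_{\geq M-1}(ax)\subseteq A_{\geq M}x=0$) gives $A_{\geq 1}f(x)=0$ and reduces to the base. For $\Ext^1$, I first show $\Ext_{A-Gr}^1(A/A_{\geq M}[n],Y)=0$ by induction on $M$: the step uses the short exact sequence $0\to A_{M-1}\to A/A_{\geq M}\to A/A_{\geq M-1}\to 0$ and the vanishing of $\Ext^1(A_{M-1},Y)$, which follows by presenting the $A_0$-module $A_{M-1}$ (viewed in $A-Gr$ as concentrated in degree $M-1$) as a quotient of a coproduct of shifted copies of $A_0$ and invoking the already proven $\Hom$-vanishing from $\mathcal{T}$. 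Finally, given an arbitrary extension $0\to Y\to E\to T\to 0$ with $T\in\mathcal{T}$, I argue that $t(E)\to T$ is an isomorphism: injectivity comes from $t(Y)=0$, and for surjectivity, a homogeneous lift $x\in E$ of $\bar{x}\in T$ with $A_{\geq M}\bar{x}=0$ defines a graded morphism $A_{\geq M}[-d]\to Y$, $a\mapsto ax$, which by the just-proven $\Ext^1$-vanishing extends to a graded morphism $A[-d]\to Y$ given by right action on some $y\in Y$; then $x-y\in t(E)$ lifts $\bar{x}$. The resulting isomorphism $t(E)\cong T$ splits the extension.

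For (2), the hypothesis that $A_{\geq 1}$ is finitely presented makes each $A_0[n]$ a $2$-presented object of $A-Gr$, so both $\Hom_{A-Gr}(A_0[n],-)$ and $\Ext_{A-Gr}^1(A_0[n],-)$ commute with direct limits. By (1), $\Im(\iota)$ is the common kernel of these functors, hence closed under direct limits in $A-Gr$, which is exactly $\mathcal{T}$ being of finite type. I expect the main obstacle to be the $\Ext^1$ step in (1): organizing the two nested inductions (first on $M$ to get $\Ext_{A-Gr}^1(A/A_{\geq M}[n],Y)=0$, then the $t(E)$-argument to transport this vanishing to arbitrary $T\in\mathcal{T}$) and carefully tracking degree shifts so that all constructed morphisms remain graded.
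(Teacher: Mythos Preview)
Your proof is correct and follows the paper's overall strategy: closure under extensions via the finite generation of $A_{\geq m}$, reduction of the $\Hom$/$\Ext^1$ vanishing on all of $\mathcal{T}$ to the single module $A_0$ through degree filtrations, and finite type from the $2$-presentability of $A_0$. The execution differs in two places. For the $\Hom$-vanishing, the paper locates a nonzero element of $\Im(f)$ annihilated by $A_{\geq 1}$ (a socle argument), whereas you induct on the minimal $M$ with $A_{\geq M}x=0$; these are interchangeable. For the $\Ext^1$-vanishing on arbitrary $T\in\mathcal{T}$, the paper first reduces to finitely generated $T$ by writing $T$ as a directed union and using that $\Ext^1(-,Y)$ takes coproducts to products together with $\Hom(T',Y)=0$, and only then filters $T$ by degree; you instead establish $\Ext^1_{A-Gr}(A/A_{\geq M}[n],Y)=0$ first (the same degree filtration, applied to $A/A_{\geq M}$) and then treat a general extension $0\to Y\to E\to T\to 0$ directly by proving $t(E)\to T$ is an isomorphism, so that $E=Y\oplus t(E)$ splits in $A\text{-}Gr$. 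Your route avoids the coproduct reduction and is more element-wise; the paper's route is more modular, making the passage from finitely generated to arbitrary $T$ a formal step. Both are valid and of comparable length.
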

\begin{proof}
The class $\mathcal{T}$ is clearly closed under taking subobjects,
quotients and arbitrary coproducts, even without the existence of
the integer $k$ of the statement. Note that an alternative
description of a $T\in\mathcal{T}$ is that it is a graded module
such that each of its homogeneous elements is annihilated by
$A_{\geq m}$, for some $m\geq 0$. Suppose now that $0\rightarrow
T'\longrightarrow M\longrightarrow T\rightarrow 0$ is an exact
sequence in $A-Gr$, with $T,T'\in\mathcal{T}$. For each homogeneous
element $x\in M$, the cyclic graded module $Ax+T'/T'$ is isomorphic
to a graded submodule of $T$. Then there exists an integer $m>0$
such that $A_{\geq m}x\subset T'$. If we choose $m\geq k$, where $k$
is the integer in the statement, then $A_{\geq m}x$ is a finitely
generated graded submodule of $T'$. We get that $A_{\geq m}x$ is
finitely graded and $Ax/A_{\geq m}x$ is obviously finitely graded.
It follows that $Ax$ is finitely graded, and hence $\mathcal{T}$ is
a hereditary torsion class in $A-Gr$, which is clearly rigid.

Suppose now that $A_{\geq 1}$ is finitely presented as a left ideal
and let us prove assertions 1 and 2.
If $Y\in\Im(\iota )$ then
$\Hom_{A-Gr}(A_0[n],Y)=0=\Ext_{A-Gr}^1(A_0[n],Y)$ since $A_0$ is in
$\mathcal{T}$ when we view it as a graded module concentrated in
degree zero. But the comments preceding this lemma show that we have
equalities $\Hom_A(A_0,Y)=\text{HOM}_A(A_0,Y)$ and
$\Ext_A^1(A_0,Y)=\text{EXT}_A^1(A_0,Y)$ and, hence, we get that
$\Hom_A(A_0,Y)=0=\Ext_A^1(A_0,Y)$.

Conversely, suppose that $\Hom_A(A_0,Y)=0=\Ext_A^1(A_0,Y)$. Then
$\Hom_A(T,Y)=0$, for all $T\in\mathcal{T}$ (i.e.
$Y\in\mathcal{T}^\perp$). Indeed, if $f:T\longrightarrow Y$ is a
nonzero morphism in $A-Gr$, then $Im(f)\in\mathcal{T}$. Then each
finitely generated submodule of $Im(f)$ is finitely graded and,
hence, it contains a nonzero homogeneous element annihilated by
$A_{\geq 1}$. We then obtain a nonzero morphism
$A_0[n]\cong\frac{A}{A_{\geq 1}}[n]\longrightarrow
Im(f)\hookrightarrow Y$, which contradicts the fact that
$\Hom_A(A_0,Y)=0$.

By last paragraph, the task reduces to prove that if
$Y\in\mathcal{T}^\perp$ is such that $\Ext_A^1(A_0,Y)=0$, then
$\Ext_{A-Gr}^1(T,Y)=0$ holds for all $T\in\mathcal{T}$. Let
$T\in\mathcal{T}$ be any object, which we express as a direct union
$\bigcup_{i\in I}T_i$ of its finitely generated graded submodules.
Then we have an exact sequence in $A-Gr$
\begin{center}
$0\rightarrow T' \longrightarrow\coprod_{i\in
I}T_i\stackrel{\pi}{\longrightarrow} T\rightarrow 0$,
\end{center}
where $\pi$ is the canonical projection onto the direct limit. Then
we get an exact sequence of abelian groups
\begin{center}
$0=\Hom_{A-Gr}(T',Y)\longrightarrow\Ext_{A-Gr}^1(T,Y)\longrightarrow\Ext_{A-Gr}^1(\coprod_{i\in
I}T_i,Y)$.
\end{center}
Bearing in mind that $\Ext_{A-Gr}^1(?,Y):A-Gr\longrightarrow Ab$
takes coproducts to products, our task is reduced to check that
$\Ext_{A-Gr}^1(T,Y)=0$, whenever $T\in\mathcal{T}$ is finitely
generated. By definition of $\mathcal{T}$, if $T\in\mathcal{T}$ is
finitely generated, then it is concentrated in degrees
$\{r,r+1,...,s\}$, for some integers $r<s$. Then the chain
\begin{center}
$0\subset T_s\subset (T_{s-1}\oplus T_s)\subset ...\subset
(T_{r+1}\oplus...\oplus T_s)\subset \oplus_{r\leq k\leq s}T_k=T$
\end{center}
is a finite ascending chain in $A-Gr$ on which each quotient of a
member by the previous one is annihilated by $A_{\geq 1}$ and is
concentrated in just one degree. The proof then reduces to the case
when $T=X[n]$, for some $A_0$-module $X$ (viewed as a graded
$A$-module concentrated in degree $0$) and some $n\in\mathbb{Z}$.
Fix such $X$ and $n$ and consider an exact sequence of $A_0$-modules
\begin{center}
$0\rightarrow Z\longrightarrow A_0^{(H)}\longrightarrow X\rightarrow
0$,
\end{center}
which we view as a sequence of graded $A$-modules which are
annihilated by $A_{\geq 1}$. We then get an exact sequence of
abelian groups
\begin{center}
$0=\Hom_{A-Gr}(Z[n],Y)\longrightarrow\Ext_{A-Gr}^1(X[n],Y)\longrightarrow\Ext_{A-Gr}^1(A_0^{(H)}[n],Y)$.
\end{center}
The proof of assertion 1 will be finished if we prove that
$\Ext_{A-Gr}^1(A_0[n],Y)=0$, for all $n\in\mathbb{Z}$, since
$\Ext_{A-Gr}^1(?,Y)$ takes coproducts to products. But
$\Ext_{A-Gr}^1(A_0[n],Y)$ is just the $-n$-th homogeneous component
of $\text{EXT}_A^1(A_0,Y)=\Ext_A^1(A_0,Y)$, which is zero by
hypothesis.

In order to prove assertion 2, note that $\mathcal{T}$ is of finite
type if, and only if, the class $\Im(\iota )$ is closed under taking
direct limits in $A-Gr$. Bearing in mind that $A_0$ admits a
projective resolution $$\ldots\rightarrow P^{-2}\rightarrow
P^{-1}\rightarrow P^0\rightarrow A_0\rightarrow 0$$ in $A-Gr$, where
$P^{-i}$ is finitely generated for $i=0,1,2$, we get that the
composition
\begin{center}
$A-Gr\xrightarrow{\text{forgetful}}A-Mod\xrightarrow{\Ext_A^1(A_0,
?) }\mathbb{Z}-\text{Mod}$
\end{center}
preserves direct limits. This together with assertion 1 show that
$\Im(\iota )$ is closed under direct limits.
\end{proof}
We are now ready to prove the main result of this section.
\begin{teor} \label{teor.A-Gr without flat objects}
Let $A=\oplus_{n\geq 0}A_n$ be a positively graded ring with the
property that $A_{\geq 1}$ is finitely presented and $A_{\geq n}$ is
finitely generated, as left ideals, for all $n>>0$. Let
$\mathcal{T}$ be the class of virtually finitely graded left
$A$-modules and $t$ the associated torsion radical. The category
$A-Gr/\mathcal{T}$ is locally finitely presented and, in case
$\Ext_A^1(A_0,A/t(A))=0$, it has no nonzero flat objects.
\end{teor}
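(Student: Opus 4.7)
The plan is to invoke Proposition \ref{prop.locally coherent without flats} for $\mathcal{G}=A-Gr$, taking $I=\mathbb{Z}$ with its usual total order (so $I$ is trivially downward directed and has no minimum) and the canonical set $\mathcal{S}=\{A[n]:n\in\mathbb{Z}\}$ of finitely generated projective generators, stratified by $\mathcal{S}_i=\{A[i]\}$. Lemma \ref{lem.description of Im(i)} supplies that $\mathcal{T}$ is a rigid hereditary torsion class of finite type with $\Im(\iota)=\{Y\in A-Gr:\Hom_A(A_0,Y)=0=\Ext_A^1(A_0,Y)\}$, so Proposition \ref{prop.quotient of locally fp is locally fp} immediately yields that $A-Gr/\mathcal{T}$ is locally finitely presented. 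The task is thus reduced to verifying the two technical conditions of Proposition \ref{prop.locally coherent without flats}.

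Condition 2 is a direct computation: a graded morphism $A[k]\to A[i]$ is determined by an element $a\in A[i]_{-k}=A_{i-k}$ with image $Aa$. Letting $k$ range over $\{k\leq j\}$ with $j<i$ shows that $tr_j(A[i])=A\cdot A_{\geq i-j}=A_{\geq i-j}$, viewed as a graded submodule of $A[i]$. Hence $A[i]/tr_j(A[i])\cong(A/A_{\geq i-j})[i]$ is concentrated in finitely many degrees, so it lies in $\mathcal{T}$.

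Condition 1 is the core of the argument. By rigidity of $\mathcal{T}$, $\bar{\mathcal{S}}_j=\{(A/t(A))[j]\}$. The $\Hom$ half is immediate: positivity of the grading gives $\Hom_{A-Gr}(A[i],(A/t(A))[j])\cong(A/t(A))_{j-i}=0$ whenever $j<i$. The main obstacle is upgrading the single hypothesis $\Ext_A^1(A_0,A/t(A))=0$ to the required $\Ext^1$ vanishing for every $T\in\mathcal{T}_{A[i]}$ against every shift $(A/t(A))[j]$. For this, the finite presentation of $A_{\geq 1}$ furnishes $A_0=A/A_{\geq 1}$ with a graded projective presentation whose first two terms are finitely generated, so that (as in the remarks preceding Lemma \ref{lem.description of Im(i)}) $\Ext_A^1(A_0,-)$ coincides with $\text{EXT}_A^1(A_0,-)$ after forgetting the grading. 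The hypothesis then translates into $\Ext_{A-Gr}^1(A_0,(A/t(A))[j])=0$ for every $j\in\mathbb{Z}$, and since $(A/t(A))[j]\in\mathcal{T}^{\perp}$ by rigidity of $\mathcal{T}$ together with $t(A/t(A))=0$, Lemma \ref{lem.description of Im(i)}(1) places $(A/t(A))[j]$ in $\Im(\iota)$ for every $j$. Consequently $\Ext_{A-Gr}^1(T,(A/t(A))[j])=0$ for every $T\in\mathcal{T}$, in particular for every $T\in\mathcal{T}_{A[i]}$, so the set of ``bad'' indices $j<i$ in condition 1 is in fact empty, and Proposition \ref{prop.locally coherent without flats} delivers the desired absence of nonzero flat objects.
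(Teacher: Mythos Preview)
Your proposal is correct and follows the same strategy as the paper: apply Proposition~\ref{prop.locally coherent without flats} with $I=\mathbb{Z}$ and $\mathcal{S}_n=\{A[n]\}$, checking conditions 1 and 2 directly. Your treatment of the $\Ext$ half of condition~1---showing each $(A/t(A))[j]$ lies in $\Im(\iota)$ via Lemma~\ref{lem.description of Im(i)}(1) and hence kills $\Ext^1_{A-Gr}(T,-)$ for every $T\in\mathcal{T}$---is in fact more explicit than the paper's, which declares the bad-index set empty after displaying only the $\Hom$ vanishing and leaves the role of the hypothesis $\Ext_A^1(A_0,A/t(A))=0$ to the reader.
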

\begin{proof}
By lemma \ref{lem.description of Im(i)}, we know that $\mathcal{T}$
is a hereditary torsion class of finite type, and then, by
proposition \ref{prop.quotient of locally fp is locally fp}, we know
that $A-Gr/\mathcal{T}$ is locally finitely presented.

Let us put $I=\mathbb{Z}$ and $\mathcal{S}_n=\{A[n]\}$, for each
$n\in\mathbb{Z}$. We shall prove that the two conditions of
proposition \ref{prop.locally coherent without flats} hold for
$\mathcal{S}=\bigcup_{n\in\mathbb{Z}}\mathcal{S}_n$. Note that
$\bar{\mathcal{S}}_n=\{\frac{A}{t(A)}[n]\}$, for each
$n\in\mathbb{Z}$. We have that
$\Hom_{A-Gr}(A[m],\frac{A}{t(A)}[n])\cong (\frac{A}{t(A)})_{n-m}$,
which is zero when $n<m$. Then the set of indices mentioned in
condition 1 of proposition \ref{prop.locally coherent without flats}
is empty in this case.

On the other hand, if $m<n$ are integers then
we have $tr_m(A[n])=tr_{m-n}(A)[n]$ and hence
$\frac{A[n]}{tr_m(A[n])}\cong\frac{A}{tr_{m-n}(A)}[n]$ and, in order
to check condition 2 of proposition \ref{prop.locally coherent
without flats}, it is enough to check that
$A/tr_{-r}(A)\in\mathcal{T}$, for all $r>0$. But the trace of
$\bigcup_{k\leq -r}\mathcal{S}_k=\{A[k]:\text{ }k\leq -r\}$ in $A$
is precisely $A_{\geq r}=\oplus_{n\geq r}A_n$, and hence
$A/tr_{-r}(A)=A/A_{\geq r}$. This is a finitely graded $A$-module
and, hence, it is in $\mathcal{T}$.
\end{proof}
\section{The category of quasicoherent sheaves on a projective scheme has no
nonzero flat objects}
\label{section.The category of quasicoherent sheaves on a projective
scheme has
no
nonzero flat objects}
Recall that if $A=\oplus_{n\geq 0}A_n$ is a positively graded
commutative ring,
then $\text{Proj}(A)$ is the set of graded prime ideals $\mathbf{p}$
of $A$ such that $A_{\geq 1}\not\subset\mathbf{p}$. It is a scheme
(cf. \cite[Chapter II]{H}) whose underlying topology is the Zariski
one. The (Grothendieck) category of quasicoherent sheaves on
$\text{Proj}(A)$ is our next objective. When $M$ is a graded
$A$-module and $\mathbf{p}\in\text{Proj}(A)$, we shall denote by
$M_{(\mathbf{p})}$ the graded localization of $M$ at $\mathbf{p}$.
It consists of the fractions $\frac{x}{s}$, where $x\in M$ and $s\in
A\setminus\mathbf{p}$ are homogeneous elements with the same degree.
\begin{lema} \label{lem.the class T in Serre's situation}
Let $A=\oplus_{n\geq 0}A_n$ be a commutative positively graded ring
which is finitely generated by $A_1$ as an $A_0$-algebra. The
class $\mathcal{T}$ of virtually finitely graded
$A$-modules is a rigid hereditary torsion class in $A-Gr$ and it
consits of those $M\in A-Gr$ such that $M_{(\mathbf{p})}=0$, for all
$\mathbf{p}\in\text{Proj} (A)$.
\end{lema}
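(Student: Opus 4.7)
The plan is to deduce the rigid-hereditary-torsion-class assertion from Lemma \ref{lem.description of Im(i)} and then prove the geometric characterization of $\mathcal{T}$ directly. The hard point will be passing from the condition that $\text{ann}_A(x)$ is not contained in any $\mathbf{p}\in\text{Proj}(A)$ to the stronger conclusion that some $A_{\geq N}$ annihilates $x$.

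For the first assertion I would fix finitely many $a_1,\dots,a_r\in A_1$ generating $A$ as an $A_0$-algebra. Then each $A_n$ is spanned over $A_0$ by the degree-$n$ monomials in the $a_i$, and $A_{\geq n}$ equals the left ideal $A\cdot A_n$ (since $A_m=A_{m-n}\cdot A_n$ whenever $m\geq n$ under generation in degree one). Consequently every $A_{\geq n}$ with $n\geq 1$ is a finitely generated left ideal, so Lemma \ref{lem.description of Im(i)} applies and gives that $\mathcal{T}$ is a rigid hereditary torsion class in $A-Gr$.

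For the forward direction of the characterization, pick $M\in\mathcal{T}$, $\mathbf{p}\in\text{Proj}(A)$, and a homogeneous fraction $\frac{x}{s}\in M_{(\mathbf{p})}$. The cyclic graded submodule $Ax$ is finitely graded, so $A_{\geq m}x=0$ for some $m\geq 0$. Since $A_{\geq 1}\not\subset\mathbf{p}$ and $A_1$ generates $A_{\geq 1}$ as an ideal, there must exist $t\in A_1\setminus\mathbf{p}$; then $t^m\in A_m\subset A_{\geq m}$ is a homogeneous element outside the prime $\mathbf{p}$ that annihilates $x$, forcing $\frac{x}{s}=0$ in $M_{(\mathbf{p})}$.

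For the converse, fix homogeneous $x\in M$ and put $I:=\text{ann}_A(x)$, a graded ideal. The vanishing $\frac{x}{1}=0$ in $M_{(\mathbf{p})}$ supplies a homogeneous element of $A\setminus\mathbf{p}$ sitting inside $I$, so $I\not\subset\mathbf{p}$ for every $\mathbf{p}\in\text{Proj}(A)$. The crucial step is then to upgrade this to $A_{\geq 1}\subset\sqrt{I}$: if some homogeneous $t\in A_{\geq 1}$ escaped $\sqrt{I}$, a standard Zorn argument on graded ideals containing $I$ and disjoint from the multiplicative set $\{t^n:n\geq 0\}$ would yield a graded prime $\mathbf{p}\supset I$ with $t\notin\mathbf{p}$, hence $\mathbf{p}\in\text{Proj}(A)$, contradicting $I\not\subset\mathbf{p}$. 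Applying $A_{\geq 1}\subset\sqrt{I}$ to each generator $a_i\in A_1$ produces exponents $k_i$ with $a_i^{k_i}\in I$, and a pigeonhole argument on monomials of large total degree then forces $A_{\geq N}\subset I$ for $N=\sum_i(k_i-1)+1$. Thus $Ax$ is finitely graded and $M\in\mathcal{T}$, completing the proof.
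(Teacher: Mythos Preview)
Your argument is correct and follows essentially the same route as the paper: both invoke Lemma~\ref{lem.description of Im(i)} after noting that each $A_{\geq m}$ is finitely generated by degree-$m$ monomials, and both handle the geometric characterization by showing that $A_{\geq 1}$ lies in the graded radical of the annihilator of any homogeneous element (the paper phrases this as $\text{Proj}(A/I)=\emptyset$ forcing $(A/I)_{\geq 1}$ nilpotent, while you run the Zorn argument directly). One small point to clean up: since $M_{(\mathbf{p})}$ consists only of fractions $\frac{x}{s}$ with $\deg x=\deg s$, the expression $\frac{x}{1}$ is not literally an element of $M_{(\mathbf{p})}$ when $\deg x\neq 0$; you should either shift to degree~$0$ as the paper does, or replace $\frac{x}{1}$ by $\frac{x}{t^{\deg x}}$ (respectively $\frac{t^{-\deg x}x}{1}$) using the element $t\in A_1\setminus\mathbf{p}$ you already produced.
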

\begin{proof}
If $\{x_0,...,x_n\}$ is a set of elements of $A_1$ which generates
$A$ as an $A_0$-algebra, then the graded ideal $A_{\geq m}$ is
generated by all monomials of degree $m$ in the $x_i$. It follows
that $A_{\geq m}$ is a finitely generated ideal, for each $m>0$. By
lemma \ref{lem.description of Im(i)}, we get that $\mathcal{T}$ is a
rigid hereditary torsion class in $A-Gr$.

To prove the final statement, due to the exactness of localization,
we just need to check that a cyclic graded $A$-module $M$ is
finitely graded if, and only if, $M_{(\mathbf{p})}=0$ for all
$\mathbf{p}\in\text{Proj}(A)$. For the 'only if' part note that if
$M$ is finitely graded, then there is a power $(A_{\geq 1})^m$ which
annihilates $M$. Choosing an element $s_\mathbf{p}\in A_{\geq
1}\setminus\mathbf{p}$, we get that $s_\mathbf{p}^mM=0$ and
therefore $M_{(\mathbf{p})}=0$, for each
$\mathbf{p}\in\text{Proj}(A)$.

For the 'if' part, suppose $M=Ax$, where $x$ is a homogenous element
of degree $r$. Then, applying the shift $?[-r]:A-Gr\longrightarrow
A-Gr$, we can and shall assume, without loss of generality, that
$deg(x)=0$. Then $M$ is isomorphic to $A/I$, for some graded ideal
$I$ of $A$. Saying that $M_{(\mathbf{p})}=0$, for all $\mathbf{p}\in
\text{Proj}(A)$ is equivalent to saying that the graded ring $A/I$
has $\text{Proj}(A/I)=\emptyset$. This means that $(A/I)_{\geq
1}=\frac{A_{\geq 1}+I}{I}$ is contained in all the graded prime
ideals of $A/I$. That is, the homogeneous elements of $(A/I)_{\geq
1}$ are all nilpotent, and then $(A/I)_{\geq 1}$ is a nilpotent
ideal of $A/I$ since $(A/I)_{\geq 1}$ is a finitely generated ideal
of $A/I$. This means that there is a large enough integer $m>0$ such
that $(A_{\geq 1})^n\subseteq I$, for all $n\geq m$. But, due to the
fact that $A_1$ generates $A$ as an $A_0$-algebra, we have an
equality $A_n=A_1^n$, for each $n>0$. It follows that $A_n\subset
I$, and hence $(A/I)_n=0$, for all $n\geq m$. Then $M\cong A/I$ is
finitely graded.
\end{proof}
For any (algebraic) scheme $X$, we denote by $Qcoh(X)$ the
category of quasi-coherent sheaves on $X$. The following result,
essentially due to Serre, is standard. For the convenience of the
reader, we give a summary of the proof essentially derived from
Murfet's notes (see \cite{Mur} and \cite{Mur2}).
\begin{teor}[Serre] \label{teor.Serre}
Let $A=\oplus_{n\geq 0}A_n$ be a positively graded commutative
algebra which is finitely generated by $A_1$ as an $A_0$-algebra,
let $X=\text{Proj}(A)$ be the associated projective scheme and let
$\mathcal{T}$ be the class of virtually finitely graded $A$-modules.
There is an equivalence of categories
$A-Gr/\mathcal{T}\stackrel{\cong}{\longleftrightarrow}Qcoh(X)$.
\end{teor}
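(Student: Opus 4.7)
The plan is to realize the equivalence as a Gabriel-type localization by exhibiting an exact functor from $A-Gr$ to $Qcoh(X)$ with a fully faithful right adjoint and kernel exactly $\mathcal{T}$, and then invoking Proposition \ref{prop.caracterization of Giraud subcategories} together with Lemma \ref{lem.the class T in Serre's situation}.

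First I would introduce the sheafification (or ``tilde'') functor $\widetilde{(-)}\colon A-Gr\longrightarrow Qcoh(X)$ which, on a basic open $D_+(f)$ with $f\in A_+$ homogeneous, takes a graded $A$-module $M$ to the degree-zero part of the homogeneous localization $M_f$. This functor is $A_0$-linear, it preserves direct limits and coproducts, and, because homogeneous localization at each relevant prime is exact, the functor $\widetilde{(-)}$ is exact. In particular its stalk at $\mathbf{p}\in\text{Proj}(A)$ is the graded localization $M_{(\mathbf{p})}$, so by Lemma \ref{lem.the class T in Serre's situation} we have $\ker(\widetilde{(-)})=\mathcal{T}$.

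Next I would construct the right adjoint, namely the total graded sections functor $\Gamma_*\colon Qcoh(X)\longrightarrow A-Gr$ given by $\Gamma_*(\mathcal{F})=\bigoplus_{n\in\mathbb{Z}}\Gamma(X,\mathcal{F}(n))$, where $\mathcal{F}(n)=\mathcal{F}\otimes_{\mathcal{O}_X}\mathcal{O}_X(n)$. Using the standard adjunction between restriction of scalars and the tilde construction on the basic affines $D_+(f)=\text{Spec}(A_{(f)})$, and gluing, one checks the hom--tensor adjunction $\Hom_{Qcoh(X)}(\widetilde{M},\mathcal{F})\cong\Hom_{A-Gr}(M,\Gamma_*(\mathcal{F}))$. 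This step is the routine but unavoidable bookkeeping part of the argument.

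The hard part, which is the geometric heart of Serre's theorem, is showing that the counit $\varepsilon\colon\widetilde{\Gamma_*(\mathcal{F})}\longrightarrow\mathcal{F}$ is a natural isomorphism for every quasi-coherent sheaf $\mathcal{F}$ on $X$. The hypothesis that $A$ is generated in degree $1$ by a finite set $\{x_0,\dots,x_n\}\subset A_1$ is essential: it ensures that $X$ is covered by the finitely many affines $D_+(x_i)$, and one verifies over each $D_+(x_i)$ that every section of $\mathcal{F}$ is of the form $m/x_i^k$ for some homogeneous $m\in\Gamma_*(\mathcal{F})$, by clearing denominators using the Noetherian-style argument that works because the $x_i$ generate $A_+$. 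Once this is established, the identification of $\mathcal{F}$ with $\widetilde{\Gamma_*(\mathcal{F})}$ gives that $\varepsilon$ is an isomorphism.

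Having established (i) exactness of $\widetilde{(-)}$, (ii) the adjunction $\widetilde{(-)}\dashv\Gamma_*$, and (iii) that the counit is an isomorphism, I can apply Proposition \ref{prop.caracterization of Giraud subcategories} with $\mathcal{G}=A-Gr$, $\mathcal{G}'=Qcoh(X)$, $p=\widetilde{(-)}$ and $j=\Gamma_*$. That proposition yields that $\ker(\widetilde{(-)})$ is a hereditary torsion class and that the induced functor $A-Gr/\ker(\widetilde{(-)})\longrightarrow Qcoh(X)$ is an equivalence. Combined with the identification $\ker(\widetilde{(-)})=\mathcal{T}$ from Lemma \ref{lem.the class T in Serre's situation}, this gives the desired equivalence $A-Gr/\mathcal{T}\cong Qcoh(X)$.
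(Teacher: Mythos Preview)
Your proposal is correct and follows essentially the same route as the paper: exhibit the tilde functor as an exact left adjoint with right adjoint $\Gamma_*$, check that the counit is an isomorphism, apply Proposition~\ref{prop.caracterization of Giraud subcategories}, and then identify the kernel with $\mathcal{T}$ via Lemma~\ref{lem.the class T in Serre's situation}. The paper differs only cosmetically, citing Murfet's notes for the adjunction and \cite[Proposition II.5.15]{H} for the counit isomorphism rather than sketching those steps, and postponing the identification of the kernel to the end.
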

\begin{proof}
Using the notation of \cite[Chapter II, Section 5]{H}, the
assignment $M\rightsquigarrow\tilde{M}$ gives a functor
$(-)^\thicksim :A-Gr\longrightarrow O_X-\text{Mod}$ which is exact
(see, e.g., \cite[Lemma 2]{Mur2}) and, by definition of
quasicoherent sheaf, this functor has $Qcoh(X)$ as its
essential image. Then the induced functor
$p:A-Gr\longrightarrow Qcoh(X)$ is also exact because
$Qcoh(X)$ is a full subcategory of $O_X-\text{Mod}$.

On the other hand, there is an
additive functor $\Gamma_*:O_X-Mod\longrightarrow A-Gr$ which is
right adjoint to $(-)^\thicksim $ (see \cite[Proposition 2]{Mur}).
We shall denote by $\epsilon
:(-)^\thicksim\circ\Gamma_*\longrightarrow 1_{O_X-MOd}$ and $\eta
:1_{A-Gr}\longrightarrow\Gamma_*\circ (-)^\thicksim$ the counit and
the unit of this adjunction, respectively. It follows that $p$ is
left adjoint to the restriction
$j:=\Gamma_{*|Qcoh(X)}:Qcoh(X)\longrightarrow A-Gr$. By
\cite[Proposition II.5.15]{H}, we know that the counit of this
adjunction is the identity.

Now proposition \ref{prop.caracterization of Giraud subcategories}
tells us that $Qcoh(X)$ is equivalent to $A-Gr/\Ker(p)$ and our
task reduces to check that $\Ker (p)=\mathcal{T}$. But, by the
properties of sheaves, we know that $\tilde{M}=0$ if, and only if,
each stalk $\tilde{M}_\mathbf{p}$ is zero. By \cite[Proposition
II.5.11]{H}, this is equivalent to
saying that $M_{(\mathbf{p})}=0$,
for all $\mathbf{p}\in\text{Proj}(A)$. The preceding lemma states
that this is equivalent to saying that $M\in\mathcal{T}$.
\end{proof}
\begin{rem}
In the theory of (algebraic schemes) there is a classical concept of
flat sheaf, which we call here \emph{geometric flat}. Namely, if $X$
is a scheme with structural sheaf of rings $O_X$, then a sheaf
$\mathcal{F}$ of $O_X$-modules is geometrically flat when the stalk
$\mathcal{F}_x$ is a flat $O_{X,x}$-module, for each point $x\in X$.
For this reason, a flat object in a Grothendieck category
$\mathcal{G}$ will be called \emph{categorical flat} in the
particular case when $\mathcal{G}=Qcoh(X)$, for a scheme $X$.
\end{rem}
The main result of this section is the following.
\begin{teor} \label{teor.Qcoh(X) without flat objects}
Let $A=\oplus_{n\geq 0}A_n$ be a positively graded commutative ring
which is finitely generated by $A_1$ as an $A_0$-algebra, let $I$ be
the (graded) ideal of $A$ consisting of those $a\in A$ such that
$A_1^na=0$, for some $n=n(a)\in\mathbb{N}$, and let
$X=\text{Proj}(A)$ be the associated projective scheme. If the two
following conditions hold, then $Qcoh(X)$ has no nonzero
categorical flat object:
\begin{enumerate}
\item The ideal $A_{\geq 1}$ is finitely presented. \item
$\Ext_A^1(A_0,A/I)=0$
\end{enumerate}
\end{teor}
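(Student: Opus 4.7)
The plan is to reduce the statement to Theorem \ref{teor.A-Gr without flat objects} via Serre's equivalence. By Theorem \ref{teor.Serre}, since $A$ is finitely generated by $A_1$ as an $A_0$-algebra, there is an equivalence of categories $Qcoh(X)\cong A\text{-}Gr/\mathcal{T}$, where $\mathcal{T}$ is the class of virtually finitely graded $A$-modules. Since flatness in the Stenstr\"om sense is a categorical notion (preserved by equivalences of locally finitely presented categories), it suffices to show that $A\text{-}Gr/\mathcal{T}$ has no nonzero flat object, which in turn reduces to checking the hypotheses of Theorem \ref{teor.A-Gr without flat objects} for the graded ring $A$.

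First I would verify the two ring-theoretic finiteness assumptions. The hypothesis on $A_{\geq 1}$ being finitely presented is assumed directly in condition (1). For the requirement that $A_{\geq n}$ be finitely generated as a left ideal for all $n\gg 0$, I would reuse the argument already given in the proof of Lemma \ref{lem.the class T in Serre's situation}: if $\{x_0,\dots,x_n\}\subseteq A_1$ generates $A$ as an $A_0$-algebra, then $A_{\geq m}$ is generated as a left ideal by the (finitely many) monomials of degree $m$ in the $x_i$'s, for every $m>0$.

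The main step is to identify the torsion radical $t$ associated to $\mathcal{T}$ evaluated at $A$ with the ideal $I$ defined in the statement; this is what translates condition (2) into the $\Ext$ hypothesis needed in Theorem \ref{teor.A-Gr without flat objects}. A homogeneous element $a\in A$ lies in $t(A)$ if and only if the cyclic graded submodule $Aa$ is finitely graded, equivalently $A_{\geq m}a=0$ for some $m>0$. Using commutativity and the fact that $A_1$ generates $A$ over $A_0$, one has $A_k=A_1^k$ for every $k>0$, so $A_{\geq m}$ is generated as a left ideal by $A_1^m$, and therefore $A_{\geq m}a=0$ if and only if $A_1^m a=0$. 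Thus a homogeneous $a$ lies in $t(A)$ precisely when it lies in $I$, and by decomposing an arbitrary element of $A$ into homogeneous components this yields $t(A)=I$. Here is where the main (small) subtlety sits: one must verify that the reduction from $A_{\geq m}a=0$ to $A_1^m a=0$ really uses only commutativity and the algebra generation hypothesis, and that the equality can be checked degreewise.

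With $t(A)=I$ established, condition (2) reads $\Ext_A^1(A_0,A/t(A))=0$, which together with the finitely presented/finitely generated assumptions is exactly what is required by Theorem \ref{teor.A-Gr without flat objects}. Applying that theorem yields that $A\text{-}Gr/\mathcal{T}$ is locally finitely presented and has no nonzero flat object. Transferring this along the Serre equivalence of Theorem \ref{teor.Serre} gives the desired conclusion for $Qcoh(X)$.
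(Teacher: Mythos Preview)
Your proposal is correct and follows essentially the same route as the paper: identify $I$ with $t(A)$ using that $A_n=A_1^n$, invoke the finiteness of $A_{\geq m}$ from the proof of Lemma~\ref{lem.the class T in Serre's situation}, apply Theorem~\ref{teor.A-Gr without flat objects}, and transfer via Serre's equivalence. The paper's proof is terser but the structure and the ingredients are the same.
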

\begin{proof}
Note that a graded $A$-module is finitely graded if, and only if, it
is annihilated by $A_n=A_1^n$, for some $n\geq 0$. This shows that
$I=t(A)$, where $t$ is the torsion radical associated to
$\mathcal{T}$.

By the proof of lemma \ref{lem.the class T in
Serre's situation}, we know that $A_{\geq m}$ is finitely generated,
for all $m\geq 0$. Then, by theorem \ref{teor.A-Gr without flat
objects}, we know that $A-Gr/\mathcal{T}$ does not have nonzero flat
objects. Finally, by Serre's theorem \ref{teor.Serre}, we get the
result.
\end{proof}
We refer to \cite{mat} or \cite{Kunz} for the terminology used in
the following result.
\begin{cor} \label{cor.examples}
Let $A=\oplus_{n\geq 0}A_n$ be a positively graded commutative
Noetherian domain which is finitely generated by $A_1$ as an
$A_0$-algebra, suppose that $A_{\geq 1}$ has height $\geq 2$ and let
$X=\text{Proj}(A)$ be the associated projective scheme. If $A$
satisfies one of the following two conditions (as an ungraded ring),
then $Qcoh(X)$ has no nonzero categorical flat objects.
\begin{enumerate}
\item $A$ is integrally closed \item $A$ is Cohen-Macaulay
\end{enumerate}
\end{cor}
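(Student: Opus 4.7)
The plan is to apply Theorem \ref{teor.Qcoh(X) without flat objects}, for which I must verify its two hypotheses in each of the cases (1) and (2) of the corollary. Since $A$ is Noetherian, the ideal $A_{\geq 1}$ is finitely generated and therefore finitely presented, so hypothesis (1) holds for free in both cases.

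To check hypothesis (2), I first identify the ideal $I$ of that theorem. Because $A_{\geq 1}$ has height $\geq 2$ it is nonzero; since $A$ is generated by $A_1$ over $A_0$, this forces $A_1\neq 0$. Picking a nonzero $x\in A_1$, any $a\in A$ with $A_1^na=0$ satisfies $x^na=0$, and as $A$ is a domain this gives $a=0$. Hence $I=0$ and $A/I=A$, so condition (2) of Theorem \ref{teor.Qcoh(X) without flat objects} becomes $\Ext_A^1(A_0,A)=0$.

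Because $A_0=A/A_{\geq 1}$ is a finitely generated module over the Noetherian ring $A$, the standard identity $\text{grade}(A_{\geq 1},A)=\inf\{i\geq 0:\Ext_A^i(A_0,A)\neq 0\}$ reduces matters to showing that $\text{grade}(A_{\geq 1},A)\geq 2$. In case (2), where $A$ is Cohen-Macaulay, grade equals height for every ideal, so $\text{grade}(A_{\geq 1},A)=\text{ht}(A_{\geq 1})\geq 2$ and we are done. In case (1), where $A$ is integrally closed (equivalently, by Serre's criterion, satisfies $R_1$ and $S_2$), I will produce an $A$-regular sequence of length $2$ inside $A_{\geq 1}$: pick any nonzero homogeneous $x\in A_{\geq 1}$, which is regular because $A$ is a domain; use $(S_2)$ to deduce that every associated prime of $A/xA$ has height $1$ in $A$ (since such a prime $\mathfrak{q}$ satisfies $\text{depth}(A/xA)_\mathfrak{q}=0$, whereas $\text{ht}(\mathfrak{q})\geq 2$ would force $\text{depth}(A_\mathfrak{q})\geq 2$ and hence $\text{depth}(A/xA)_\mathfrak{q}\geq 1$); since $A_{\geq 1}$ has height $\geq 2$ it is contained in none of these finitely many primes, so prime avoidance produces $y\in A_{\geq 1}$ which is regular on $A/xA$.

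I expect the main obstacle to be the integrally closed case: bridging the hypothesis ``$A_{\geq 1}$ has height $\geq 2$'' to the conclusion ``$A_{\geq 1}$ contains a regular sequence of length $2$'' requires both Serre's normality criterion and the local depth analysis of $A/xA$ sketched above. The Cohen-Macaulay case, by contrast, is immediate from the grade-equals-height identity, and the rest of the argument is just bookkeeping to feed everything into Theorem \ref{teor.Qcoh(X) without flat objects}.
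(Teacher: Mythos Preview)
Your proof is correct and follows the same overall strategy as the paper: both verify the hypotheses of Theorem \ref{teor.Qcoh(X) without flat objects} by observing that $I=0$ (since $A$ is a domain) and then establishing $\Ext_A^1(A_0,A)=0$; for the Cohen--Macaulay case, both invoke the equality of grade and height. The only noteworthy difference is in the integrally closed case: the paper quotes a result from Stenstr\"om's book (Proposition VII.6.8 in \cite{Sten}) to the effect that $S_2$ forces $\Hom_A(A_0,E^0(A)\oplus E^1(A))=0$ for the first two terms of the minimal injective resolution of $A$, whereas you give a direct, self-contained construction of a regular sequence of length $2$ in $A_{\geq 1}$ via the $S_2$ condition, associated primes of $A/xA$, and prime avoidance. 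Your route is more elementary and avoids the external citation; the paper's is shorter but black-boxes the same underlying fact.
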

\begin{proof}
In both cases the ideal $I$ of theorem \ref{teor.Qcoh(X) without
flat objects} is zero.

1) If $A$ is integrally closed, then $A$ satisfies Serre condition
$S_2$ (see \cite[Chapter VII, Section 6]{Sten}) for the definition
and the result). Now proposition VII.6.8 in \cite{Sten} says that
$A/A_{\geq 1}=A_0$ satisfies the property that
$\text{Hom}_A(A_0,E^0(A)\oplus E^1(A))=0$, where
$0\rightarrow
A\longrightarrow E^0(A)\longrightarrow E^1(A)$ is the initial part
of the minimal injective resolution of $A$ in $A-\text{Mod}$. It
follows immediately that $\Ext_A^1(A_0,A)=0$ and, hence, theorem
\ref{teor.Qcoh(X) without flat objects} applies.

2) If $A$ is Cohen-Macaulay, then $\text{depth}(A_{\geq 1},A)$
coincides with the height $\text{ht}(A_{\geq 1})$ (see
\cite[Theorem VI.3.14]{Kunz}). By definition of the depth, this
means that $0=\Ext_A^i(A/A_{\geq 1},A)=\Ext_A^i(A_0,A)$, for all
$i<\text{ht}(A_{\geq 1})$. In particular $\Ext_A^1(A_0,A)=0$ and
theorem \ref{teor.Qcoh(X) without flat objects} applies again.
\end{proof}
Recall that if $R$ is a commutative ring and $n>0$ is an integer,
then the projective $n$-space over $R$ is
$\mathbf{P}^n(R)=\text{Proj}(R[x_0,...,x_n])$, where the $x_i$ are
variables over $R$ of degree $1$.
\begin{cor} \label{cor.Qcoh(P(R)) without flat objects}
$Qcoh(\mathbf{P}^n(R))$ does not have nonzero categorical
flat objects, for any commutative ring $R$ and any integer $n>0$.
\end{cor}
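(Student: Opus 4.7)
The plan is to specialize Theorem \ref{teor.Qcoh(X) without flat objects} to the graded ring $A = R[x_0, \ldots, x_n]$ (each $x_i$ in degree one), which presents $\mathbf{P}^n(R) = \text{Proj}(A)$. This $A$ is positively graded commutative with $A_0 = R$ and is finitely generated by $A_1 = \bigoplus_i Rx_i$ as an $A_0$-algebra, so the standing hypotheses of Theorem \ref{teor.Qcoh(X) without flat objects} are immediate; the task reduces to verifying its conditions (1) and (2) and identifying the ideal $I$.

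First I would show that $I = 0$. Each $x_i$ is a non-zero-divisor on $A$ (since $A$ is a free $R$-module on the monomials and multiplication by $x_i$ is injective on this basis), so if $A_1^k a = 0$ then in particular $x_0^k a = 0$, forcing $a = 0$. Hence $A/I = A$, and condition (2) becomes $\Ext_A^1(A_0, A) = 0$.

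Both remaining conditions would then follow from a single input: the Koszul complex on $x_0, \ldots, x_n$, which provides a finite free resolution of $A_0 = A/A_{\geq 1}$ by finitely generated graded free $A$-modules. Regularity of the sequence holds over an arbitrary commutative $R$ because successively quotienting by $x_0, \ldots, x_k$ yields $R[x_{k+1}, \ldots, x_n]$, in which $x_{k+1}$ is clearly regular. Truncating the Koszul resolution at its first step exhibits $A_{\geq 1}$ as finitely presented (the generators of the relations on $x_0, \ldots, x_n$ are the finitely many Koszul relations $x_i e_j - x_j e_i$), giving condition (1). For condition (2), the standard depth computation for a regular sequence of length $n+1$ gives $\Ext_A^i(A_0, A) = 0$ for every $i < n+1$; since $n \geq 1$ we have $n+1 \geq 2$, so in particular $\Ext_A^1(A_0, A) = 0$.

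I do not expect a substantive obstacle; the only methodological choice is to bypass Corollary \ref{cor.examples} (which needs $A$ to be a Noetherian domain) and instead invoke the Koszul resolution on $x_0, \ldots, x_n$ directly, because this resolution behaves uniformly over an arbitrary commutative ring $R$ and simultaneously yields both the finite presentation of $A_{\geq 1}$ and the vanishing of $\Ext_A^1(A_0, A)$.
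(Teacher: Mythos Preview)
Your proposal is correct and follows essentially the same route as the paper: both specialize Theorem~\ref{teor.Qcoh(X) without flat objects} to $A=R[x_0,\dots,x_n]$, observe that the ideal $I$ is zero, and reduce everything to the vanishing of $\Ext_A^1(R,A)$. The only difference is presentational: the paper dispatches $\Ext_A^1(R,A)=0$ by citing Cartan--Eilenberg and Rotman, whereas you spell out the Koszul resolution on the regular sequence $x_0,\dots,x_n$ explicitly (and use it simultaneously to check the finite presentation of $A_{\geq 1}$, which the paper leaves implicit); the underlying computation is the same.
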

\begin{proof}
Put $A:=R[x_0,x_1,...,x_n]$ with its canonical grading. Here
$A_{\geq 1}=\sum_{0\leq i\leq n}Ax_i$ is the ideal generated by
$\{x_0,x_1,...,x_n\}$ and the ideal $I$ of theorem \ref{teor.Qcoh(X)
without flat objects} is zero. Moreover, we have $A_0=R$, viewed as
$A$-module in the usual way and $A_1=\sum_{0\leq i\leq n}Rx_i$.
Therefore $A$ is generated by $A_1$ as an $A_0$-algebra. The result
will follow from theorem \ref{teor.Qcoh(X) without flat objects}
once we check that $\Ext_A^1(R,A)=0$. But this is well-known. It is
a straightforward consequence of \cite[Chapter VIII, exercise 7]{CE}
or, by finite induction, from \cite[Theorem 9.37]{Rot}.
\end{proof}
\section{Module categories modulo locally finite dimensional
ones}\label{section.Module categories modulo locally finite dimensional
ones}
In this section we fix a commutative ring $R$ with $1$ and the term
'algebra' will mean (not necessarily unital, but always associative)
'$R$-algebra'. An \emph{algebra with enough idempotents} is an
associative algebra $A$ on which a distinguished family of nonzero
orthogonal idempotents $(e_i)_{i\in I}$ has been fixed, satisfying
that $\oplus_{i\in I}e_iA=A=\oplus_{i\in I}Ae_i$ as an $R$-module.
We assume all the time that $Re_i=e_iR$ is free of rank $1$ over $R$
generated by $e_i$ and put $B=\oplus_{i\in I}Re_i$, which is a
commutative subalgebra of $A$. We will assume that $A$ is
\emph{augmented} (with respect to $B$). This means that the
inclusion $B\hookrightarrow A$ is a section in the category of
algebras or, equivalently, that there exists a two-sided ideal $J$
of $A$ such that $A=B\oplus J$ as $B$-bimodules. Although $J$ is not
unique, we will fix one and the corresponding decomposition
$A=B\oplus J$ will be called the \emph{Wedderburn-Malcev type
decomposition} of $A$.
Over such an algebra $A$, a (left) module $M$ will be always
considered to be unitary, i.e. $AM=M$ or, equivalently, it admits a
decomposition $M=\oplus_{i\in I}e_iM$ as an $R$-module. The
corresponding category is denoted by $A-\text{Mod}$ and is a
Grothendieck category with $\{Ae_i:$ $i\in I\}$ as a set of finitely
generated projective generators.
We call
an $A$-modulo $M$ \emph{torsion} (with respect to the given
Wedderburn-Malcev decomposition) when each element of $M$ is
annihilated by some power $J^n$. In this section, unless explicitly
said otherwise, we denote by $\mathcal{T}$ the full subcategory of
$A-\text{Mod}$ consisting of torsion $A$-modules.

The following is the analogous of lemma \ref{lem.description of
Im(i)} in this context.
\begin{lema} \label{lem.Mod-modulo-lfd has no flats}
Let $A$ an algebra with enough idempotents as above, with
$(e_i)_{i\in I}$ as a distinguished family of nonzero orthogonal
idempotents, let $A=B\oplus J$ be a fixed Wedderburn-Malcev type
decomposition, where $B=\oplus_{i\in I}Re_i$, and let $\mathcal{T}$
be the corresponding class of torsion $A$-modules. Suppose that, for
each $i\in I$, there is an integer $k(i)>0$ such that $J^me_i$ is a
finitely generated left ideal, for all $m\geq k(i)$. Then
$\mathcal{T}$ is a hereditary torsion class in $A-\text{Mod}$.
Moreover, if $Je_i$ is finitely presented as a left ideal, for each
$i\in I$ and $\iota :A-\text{Mod}/\mathcal{T}\longrightarrow
A-\text{Mod}$ is the section functor, then:
\begin{enumerate}
\item $\text{Im}(\iota )$ consists of the $A$-modules $Y$ such that
$\Hom_A(Be_i,Y)=0=\Ext_A^1(Be_i,Y)$, for all $i\in I$ \item
$\mathcal{T}$ is of finite type.
\end{enumerate}
\end{lema}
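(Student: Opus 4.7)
The plan is to follow closely the proof of Lemma \ref{lem.description of Im(i)}, with the role played there by $A_0$ (viewed as a graded $A$-module concentrated in degree zero) played here by the modules $Be_i$, $i\in I$, and the role of the shifts $?[n]$ replaced by the tensor products $Be_i\otimes_R-$. Closure of $\mathcal{T}$ under subobjects, quotients and arbitrary coproducts is immediate from the definitions. The substantive step is closure under extensions: given an exact sequence $0\to T'\to M\to T\to 0$ with $T,T'\in\mathcal{T}$ and $x\in M$, one writes $x=\sum_{i\in F}e_ix$ with $F$ finite. For each $i\in F$ there is $m_i\geq k(i)$ with $J^{m_i}e_ix\subseteq T'$; by the hypothesis on $k(i)$ the left ideal $J^{m_i}e_i$ is finitely generated, so $J^{m_i}e_ix$ is a finitely generated submodule of $T'\in\mathcal{T}$ and is therefore annihilated by some $J^{n_i}$. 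Taking $N=\max_{i\in F}(n_i+m_i)$ yields $J^Nx=0$, so $M\in\mathcal{T}$.

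For assertion (1), the implication $Y\in\Im(\iota)\Rightarrow\Hom_A(Be_i,Y)=0=\Ext_A^1(Be_i,Y)$ is immediate, since $Be_i$ is annihilated by $J$ and hence lies in $\mathcal{T}$. For the converse, assume the vanishing for all $i$. I would first prove $Y\in\mathcal{T}^\perp$: given a nonzero morphism $f:T\to Y$ with $T\in\mathcal{T}$, choose $0\neq z\in\Im(f)$ and let $n\geq 1$ be minimal with $J^nz=0$; pick $0\neq z'\in J^{n-1}z$, so $Jz'=0$, and set $w=e_iz'\neq 0$ for some $i\in I$. Then $e_iw=w$ and $Aw=Bw$, and the assignment $e_i\mapsto w$ defines a nonzero $A$-homomorphism $Be_i\to Y$, a contradiction. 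Next, I would deduce $\Ext_A^1(T,Y)=0$ for every $T\in\mathcal{T}$ by a chain of reductions parallel to the graded case: expressing $T$ as the directed union of its finitely generated submodules reduces to $T$ finitely generated; the finite filtration by $J^kT$ reduces to $JT=0$; and $JT=0$ together with finite generation gives $T=\bigoplus_{i\in F}e_iT$ with $F$ finite, reducing to $T=e_iT$ for a single $i$. Viewing such a $T$ as an $R$-module $X$ via $Re_i\cong R$, one obtains $T\cong Be_i\otimes_RX$ as $A$-modules, where the $A$-action on the tensor product factors through the projection $A\twoheadrightarrow A/J=B$. A free $R$-presentation $0\to K\to R^{(H)}\to X\to 0$ tensors to an exact sequence $0\to Be_i\otimes_RK\to(Be_i)^{(H)}\to T\to 0$ of $A$-modules, and applying $\Hom_A(-,Y)$ gives $\Ext_A^1(T,Y)=0$ using the hypothesis on $Be_i$ and the fact that $\Hom_A(Be_i\otimes_RK,Y)=0$ (since $Be_i\otimes_RK\in\mathcal{T}$ and $Y\in\mathcal{T}^\perp$).

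For assertion (2), by (1) it suffices to show that both $\Hom_A(Be_i,-)$ and $\Ext_A^1(Be_i,-)$ commute with direct limits. The short exact sequence $0\to Je_i\to Ae_i\to Be_i\to 0$ together with the finite presentation of $Je_i$ supplies a projective resolution of $Be_i$ whose terms in positions $0,-1,-2$ are finitely generated, so both functors preserve direct limits. Hence $\Im(\iota)$ is closed under direct limits in $A\text{-Mod}$ and $\mathcal{T}$ is of finite type. The main technical obstacle I anticipate is the setup of the $A$-module isomorphism $T\cong Be_i\otimes_RX$ and the verification of $A$-linearity for the action on $Be_i\otimes_RX$ coming from the Wedderburn--Malcev projection; once this identification is in place, the remainder is a straightforward translation of the graded argument.
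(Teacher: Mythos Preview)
Your proposal is correct and follows essentially the same path as the paper's proof, which explicitly refers back to Lemma~\ref{lem.description of Im(i)} and only sketches the needed changes. The one place you take a slightly longer route is the reduction to $T\cong Be_i\otimes_RX$ for a single $i$: the paper simply observes that any $B$-module $X$ (i.e.\ any $T$ with $JT=0$) fits into an exact sequence $0\to Z\to\bigoplus_{i\in I}Be_i^{(H_i)}\to X\to 0$ of $A$-modules, which immediately yields the $\Ext^1$-vanishing and sidesteps the tensor identification you flag as the main technical obstacle.
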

\begin{proof}
It is entirely similar to that of lemma \ref{lem.description of
Im(i)}. So we just mention the little changes to be made. To see
that $\mathcal{T}$ is a hereditary torsion class, only the closure
under extensions need a proof. Choosing an exact sequence
\begin{center}
$0\rightarrow T'\longrightarrow M\longrightarrow T\rightarrow 0$
\end{center}
and an element $x\in M$, as in that lemma, we get that
$J^mx\subseteq T'$ for some $m>0$. But there are $i_1,...,i_r\in I$
such that $x=\sum_{1\leq t\leq r}e_{i_t}x$. Choosing $m\geq
\text{max}\{k(i_t):$ $t=1,...,r\}$, we have that $J^me_{i_t}x$ is
finitely generated and, hence, also $J^mx$ is finitely generated.
But then there is a power $J^p$ which annihilates $J^mx$, so that
$J^{p+m}x=0$. Then $M$ is a torsion $A$-module.

As in lemma \ref{lem.description of Im(i)} one readily sees that an
$A$-module $Y$ is in $\mathcal{T}^\perp$ if, and only if,
$\Hom_A(Be_i,M)=0$ for all $i\in I$. This is because each finitely
generated module in $\mathcal{T}$ admits a finite filtration
\begin{center}
$0\subsetneq J^nx\subsetneq J^{n-1}x\subsetneq ...\subsetneq
Jx\subsetneq Ax$
\end{center}
where $J^tx/J^{t+1}x$ is a $B$-module, for all $t=0,1,...,n$.

Then, again, the proof of assertion 1 reduces to check that if
$Y\in\mathcal{T}^\perp$ and $\Ext_A^1(Be_i,Y)=0$, for all $i\in I$,
then $\Ext_A^1(T,Y)=0$, for all $T\in\mathcal{T}$. The corresponding
proof of lemma \ref{lem.description of Im(i)} can be 'copied', just
taking into account that each $B$-module $X$ fills into an exact
sequence
\begin{center}
$0\rightarrow Z\longrightarrow\oplus_{i\in
I}Be_i^{(H_i)}\longrightarrow X\rightarrow 0$,
\end{center}
for some sets $H_i$.

Finally, in order to prove assertion 2, it is enough to observe
that, due to the fact that each $Je_i$ is assumed to be a finitely
presented left ideal, we have a projective resolution of $Be_i$ as
an $A$-module
\begin{center}
$\ldots\to P^{-2}\rightarrow P^{-1}\rightarrow P^{0}\rightarrow
Be_i\rightarrow 0$,
\end{center}
where $P^{-i}$ is a finitely generated $A$-module, for $i=0,1,2$. As
a consequence, the functors $\Hom_A(Be_i,?)$ and $\Ext_A^1(Be_i,?)$
preserve direct limits and, hence, the class $\text{Im}(\iota )$ is
closed under taking direct limits in $A-\text{Mod}$.
\end{proof}
We now get a wide class of locally finitely presented categories
without nonzero flat objects.
\begin{prop} \label{prop.no flats in A-Mod/lfd}
Let $A$ be an augmented algebra with enough idempotents, with
$(e_i)_{i\in I}$ as a distinguished family of nonzero orthogonal
idempotents, let
$A=B\oplus J$ be a fixed Wedderburn-Malcev type decomposition, where
$B=\oplus_{i\in I}Re_i$, and let
$\mathcal{T}$ be the corresponding class of torsion $A$-modules.
Assume that, for each $i\in I$, $Je_i$ is finitely presented and
$J^me_i$ is finitely generated, as left ideals, for all $m>>0$. If
the following conditions hold, then $A-Mod/\mathcal{T}$ is a
locally finitely presented category without nonzero flat objects:
\begin{enumerate}
\item If $\preceq$ the smallest preorder relation in $I$
containing all the pairs $(i,j)$ such that $e_iAe_j\neq 0$, then
$(I,\preceq )$ is downward directed and does not have a minimum.
\item For each $i\in I$, the set of indices $j\prec i$
such that either $\Hom_A(Ae_i,\frac{Ae_j}{t(A)e_j}))\neq 0$ or
$$\Ext_A^1(\frac{Ae_i}{J^me_i},\frac{Ae_j}{t(A)e_j})\neq 0,$$ for some
integer $m\geq 0$, is a finite set. \item For all indices $j\preceq
i$, the left $A$-module $\frac{Ae_i}{\sum_{k\preceq j}Ae_kAe_i}$ is
annihilated by some $J^n$, with $n>0$.
\end{enumerate}
\end{prop}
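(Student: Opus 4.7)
The plan is to apply Proposition \ref{prop.locally coherent without flats} to the canonical set of generators $\mathcal{S}=\{Ae_i:\; i\in I\}$ of $A-\text{Mod}$, stratified by the preorder $(I,\preceq)$ of hypothesis~(1) via $\mathcal{S}_i=\{Ae_i\}$. Each $Ae_i$ is finitely generated projective, hence finitely presented, and the assumptions on $Je_i$ and $J^me_i$ together with Lemma \ref{lem.Mod-modulo-lfd has no flats} give that $\mathcal{T}$ is a hereditary torsion class of finite type. Then Proposition \ref{prop.quotient of locally fp is locally fp} already yields that $A-\text{Mod}/\mathcal{T}$ is locally finitely presented, so it remains to verify the two hypotheses of Proposition \ref{prop.locally coherent without flats}.

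For the trace condition, I would first observe that morphisms $Ae_k\to Ae_i$ are parametrised by $e_kAe_i$, and the union of their images in $Ae_i$ generates $Ae_kAe_i$. Summing over $k\preceq j$ gives $tr_j(Ae_i)=\sum_{k\preceq j}Ae_kAe_i$, so hypothesis~(3) states exactly that $Ae_i/tr_j(Ae_i)$ is annihilated by some $J^n$ and hence lies in $\mathcal{T}$. This establishes condition~(2) of Proposition \ref{prop.locally coherent without flats}.

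The Hom-clause of condition~(1) of that proposition matches verbatim the Hom-clause of our hypothesis~(2). For the Ext-clause, I claim that $\Ext_A^1(?,\,Ae_j/t(A)e_j)$ vanishes on the entire class $\mathcal{T}_{Ae_i}$ precisely when it vanishes on each $Ae_i/J^me_i$. The only non-obvious direction is the following: given $T\in\mathcal{T}_{Ae_i}$, write $T=Ae_i/K$ and note that torsionness of $T$ forces $J^me_i\subseteq K$ for some $m$, yielding a short exact sequence $0\to K/J^me_i\to Ae_i/J^me_i\to T\to 0$ whose left term is torsion. A short calculation shows $t(Ae_j)=t(A)e_j$, so $Ae_j/t(A)e_j$ belongs to $\mathcal{T}^\perp$ and therefore $\Hom_A(K/J^me_i,\,Ae_j/t(A)e_j)=0$; the associated long exact sequence then embeds $\Ext_A^1(T,\,Ae_j/t(A)e_j)$ into $\Ext_A^1(Ae_i/J^me_i,\,Ae_j/t(A)e_j)$, giving the equivalence. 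Hence the finiteness statement in our hypothesis~(2) becomes exactly condition~(1) of Proposition \ref{prop.locally coherent without flats}.

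The main obstacle is this last equivalence, since $\mathcal{T}_{Ae_i}$ is a priori much larger than the family $\{Ae_i/J^me_i:\, m\geq 0\}$; the bridge between them is that the test objects $Ae_j/t(A)e_j$ are torsion-free, which kills the kernel contribution in the long exact sequence. The remaining verifications (the trace formula and the downward-directedness of $(I,\preceq)$ without minimum) are routine translations of the hypotheses.
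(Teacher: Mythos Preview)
Your proposal is correct and follows the same route as the paper: you stratify by $\mathcal{S}_i=\{Ae_i\}$, identify $tr_j(Ae_i)=\sum_{k\preceq j}Ae_kAe_i$ to match hypothesis~(3) with the trace condition, and for the $\Ext$-clause you use the exact sequence $0\to K/J^me_i\to Ae_i/J^me_i\to T\to 0$ together with $Ae_j/t(A)e_j\in\mathcal{T}^\perp$ to reduce $\mathcal{T}_{Ae_i}$ to the modules $Ae_i/J^me_i$, exactly as the paper does. The only cosmetic difference is that you phrase the $\Ext$-reduction as an equivalence and make explicit the identity $t(Ae_j)=t(A)e_j$, both harmless elaborations.
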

\begin{proof}
By lemma \ref{lem.Mod-modulo-lfd has no flats} and proposition
\ref{prop.quotient of locally fp is locally fp}, we know that
$A-\text{Mod}/\mathcal{T}$ is locally finitely presented.

We put $\mathcal{S}_i=\{Ae_i\}$, for each $i\in I$, and will prove
that conditions 1 and 2 of proposition \ref{prop.locally coherent
without flats} hold. Indeed, if $j<i$ are any indices in $I$, then
the trace of $\bigcup_{k\leq j}\mathcal{S}_k=\{Ae_k:$ $k\leq j \}$
in $Ae_i$ is precisely $\sum_{k\leq j}Ae_kAe_i$. Then condition 2 of
proposition \ref{prop.locally coherent without flats} follows from
condition 3 in the statement.

Let us take $i\in I$ and take $X=Ae_i$ to check condition 1 of
proposition \ref{prop.locally coherent without flats}. On one side,
condition 2 in the statement gives that the set of indices $j\prec i$
such that $\Hom_A(Ae_i,\frac{Ae_j}{t(A)e_j})\neq 0$ is finite. On the other
hand suppose that $j\prec i$ and
$\Ext_A^1(T,\frac{Ae_j}{t(A)e_j})\neq 0$, for some $T\in\mathcal{T}$
which is epimorphic image of $Ae_i$. Since $T$ is cyclic we have
that $J^mT=0$, for some $m>0$, so that we have an exact sequence
\begin{center}
$0\rightarrow T'\longrightarrow\frac{Ae_i}{J^me_i}\longrightarrow
T\rightarrow 0$.
\end{center}
By applying the long exact sequence of
$\Ext(?,\frac{Ae_j}{t(A)e_j})$ and taking into account that
$\Hom_A(T',\frac{Ae_j}{t(A)e_j})=0$, we get that
$\Ext_A^1(\frac{Ae_i}{J^me_i},\frac{Ae_j}{t(A)e_j})\neq 0$. By
condition 2, we know that there are only finitely many such indices
$j$. It follows that also condition 1 in proposition
\ref{prop.locally coherent without flats} holds. Therefore
$A-\text{Mod}/\mathcal{T}$ does not have nonzero flat objects.
\end{proof}
The conditions in last proposition look quite technical. We will see
now that the representation theory of quivers with relations
provides several examples where the conditions are satisfied. Recall
that a \emph{quiver} (equivalently \emph{oriented graph}) is a
quadruple $Q=(Q_0,Q_1,o,t)$, where $Q_0$ and $Q_1$ are sets, whose
elements are calles \emph{vertices} and \emph{arrows} respectively,
and $o,t:Q_1\longrightarrow Q_0$ are maps. Given an arrow
$\alpha\in Q_1$, the vertex $o(\alpha )$ (resp $t(\alpha )$) is
called the \emph{origin} (resp. \emph{terminus}) of $\alpha$.

If $n$ is a natural number, then a \emph{path of length $n$} is just
a vertex, when $n=0$, or a concatenation
$p=\alpha_1\alpha_2...\alpha_n$ of arrows, with
$t(\alpha_i)=o(\alpha_{i+1})$ for $i=1,...,n-1$. In this later case
$o(p):=o(\alpha_1)$ and $t(p):=t(\alpha_n)$ are called the origin
and terminus of $p$, respectively. A vertex $i\in Q_0$ is considered
to be a path of length $0$ with origin and terminus equal to $i$. A
\emph{walk} or nonoriented path between two vertices $i$ and $j$ is
a sequence $i=i_0\leftrightarrow i_1\leftrightarrow
...\leftrightarrow i_r=j$, where each edge $i_k\leftrightarrow
i_{k+1}$ is eihter an arrow $i_k\rightarrow i_{k+1}$ or an arrow
$i_k\leftarrow i_{k+1}$. The quiver is \emph{connected} when,
between any two vertices of $Q$, there is a walk.

The \emph{path $($R$-)$algebra} of $Q$ is the free $R$ module $RQ$
with basis the set of paths in $Q$ on which one defines a
multiplication of paths by the rule that $p\cdot q=0$, in case
$t(p)\neq o(q)$, and $p\cdot q=pq$ is the concatenation of $p$ and
$q$, provided that $t(p)=o(q)$. The multiplication on $RQ$ extends
by $R$-linearity this multiplication of paths. If $i\in Q_0$ then
one commonly uses the symbol $e_i$ to denote the vertex $i$, when
viewed as an element of $RQ$. Note that $RQ$ is an algebra with
enough idempotents, where $(e_i)_{i\in Q_0}$ is a distinguished
family of nonzero orthogonal idempotents.

We denote by $RQ_{\geq n}$ the ideal of $RQ$ consisting of the
$R$-linear combinations of paths of length $\geq n$. An
\emph{admissible set of relations} in $RQ$ is just a subset
$\rho\subset\bigcup_{i,j\in Q_0} e_iRQe_j$ such that each $r\in\rho$
is an $R$-linear combination of paths of length $\geq 2$. A
(two-sided) ideal $I$ of $RQ$ is called \emph{admissible} when it is
generated by an admissible set of relations. An algebra $A$ with
enough idempotents is said to be \emph{given by quivers and
relations} when there is a quiver $Q$ and an admissible set of
relations $\rho$ such that $A=RQ/I$, where $I=<\rho >$ is the ideal
of $RQ$ generated by $\rho$. We still denote by $e_i$ its image by
the projection $RQ\twoheadrightarrow RQ/I=A$. Then $(e_i)_{i\in I}$
is also a distinguished family of nonzero orthogonal idempotents of
$A$ and a Wedderburn-Malcev type decomposition of $A$ is given by
$A=B\oplus J$, where $B=\oplus_{i\in Q_0}Re_i$ and $J=RQ_{\geq
1}/I$. It will be the only one that we use in the rest of the paper
and will be called the \emph{canonical Wedderburn-Malcev type
decomposition} of $A$.

We shall say that an algebra with enough idempotents is a
\emph{locally finitely presented algebra} when it is given by quiver
and relations $(Q,\rho )$, and the set $\rho e_i=\rho\cap RQe_i$ is
finite, for all $i\in Q_0$.

The quiver $Q$ is called \emph{left (resp. right) locally finite}
when each vertex is the terminus (resp. origin) of, at most, a
finite number of arrows. We say that $Q$ is locally finite when it
is left and right locally finite. The set $Q_0$ admits a canonical
preorder $\preceq$, where $i\preceq j$ if there is a path $p$ in $Q$
with $o(p)=i$ and $t(p)=j$. Note that $\preceq$ is an order relation
exactly when $Q$ does not have oriented cycles. We shall say that
$Q$ is a \emph{downward directed quiver (without minimum)} when the
preordered set $(Q_0,\preceq )$ is downward directed (without
minimum). We will say that $Q$ is a \emph{narrow quiver} if, given
vertices $j\preceq i$, the set
\begin{center}
$\{k\in Q_0:$ $k\preceq i\text{ and }k\not\preceq j\}$
\end{center}
is finite.

Recall (see \cite{R}) that if $\Delta$ is any quiver, then the
associated translation quiver $\mathbb{Z}\Delta$ has
$(\mathbb{Z}\Delta)_0=\mathbb{Z}\times\Delta_0$ as its set of
vertices and, for each arrow $\alpha :o(\alpha )\longrightarrow
t(\alpha )$ in $\Delta$, there are two arrows $(k,\alpha
):(k,o(\alpha ))\rightarrow (k,t(\alpha ))$ and
$(k,\alpha^*):(k,t(\alpha ))\longrightarrow (k+1,o(\alpha ))$ in
$\mathbb{Z}\Delta$.
\begin{ejems}
\begin{enumerate}
\item If $\Delta$ is a finite connected quiver, then
$\mathbb{Z}\Delta$ is a connected quiver which is locally finite,
downward directed and narrow.
\item If $\Delta=\mathbf{A}_\propto$: \hspace*{0.5cm} $1\rightarrow
2\rightarrow ....\rightarrow n\rightarrow
...$, then $\mathbb{Z}\Delta$ is connected, locally finite and
downward directed, but it is not narrow.
\end{enumerate}
\end{ejems}
If $A$ is given by the quiver with relations $(Q,\rho )$, then, for
every vertex $i\in Q_0$, each relation $r\in\rho e_i$ can be written
in the form $\sum_{\alpha\in Q_1, t(\alpha )=i}x(r,\alpha )\alpha$,
for uniquely determined elements $x(r,\alpha )\in RQ_{\geq
1}e_{o(\alpha )}$. Note that, since $\rho\subset\bigcup_{j,k\in
Q_0}e_jRQe_k$, there is a unique $j=j(r)\in Q_0$ such that
$e_jr\neq 0$. Then we can assume that $x(r,\alpha )=e_jx(r,\alpha
)e_{o(\alpha )}$, for each $\alpha\in Q_1$ such that $t(\alpha )=i$.
We put $o(r)=j$. The following result is well-known when $R=K$ is a
field. We give a proof in our more general setting.
\begin{lema} \label{lem.canonical projective resolution}
Let $A$ be given by a quiver with relations $(Q,\rho )$. For each
$i\in Q_0$, there is a projective resolution of $Be_i=Re_i$ in
$A-\text{Mod}$
\begin{center}
$...\rightarrow\oplus_{r\in\rho
e_i}Ae_{o(r)}\stackrel{f}{\longrightarrow}\oplus_{\alpha\in
Q_1,t(\alpha )=i}Ae_{o(\alpha
)}\stackrel{g}{\longrightarrow}Ae_i\longrightarrow Re_i\rightarrow
0$,
\end{center}
where the maps $f$ and $g$ are determined by the equalities:
\begin{center}
$f(e_{o(r)})=\sum_{\alpha\in Q_1,t(\alpha )=i}x(r,\alpha
)e_{o(\alpha )}$ \hspace*{1cm} $g(e_{o(\alpha )})=\alpha$,
\end{center}
for each $r\in\rho e_i$ and each $\alpha\in Q_1$ with $t(\alpha
)=i$.
\end{lema}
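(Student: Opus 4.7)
My plan is to adapt the classical proof for $R$ a field by working directly with the path algebra $RQ$ and exploiting the fact that $RQ$ is a \emph{free} $R$-module on the paths, which is the key feature that makes the field-case argument go through unchanged. First, the displayed sequence is a complex of projective $A$-modules: each $Ae_j$ is a direct summand of $A=\oplus_{j\in Q_0}Ae_j$, hence projective; the composition $\oplus_\alpha Ae_{o(\alpha)}\to Ae_i\twoheadrightarrow Re_i$ vanishes because $g(e_{o(\alpha)})=\alpha\in Je_i$; and $(g\circ f)(e_{o(r)})=\sum_\alpha x(r,\alpha)\alpha=r=0$ in $A$, since $r\in\rho\subseteq I$.

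Exactness at $Re_i$ and at $Ae_i$ is direct. The augmentation $Ae_i\twoheadrightarrow Re_i$ is the canonical projection coming from the decomposition $Ae_i=Re_i\oplus Je_i$, so its kernel is $Je_i$; and by the definition of $J=RQ_{\geq 1}/I$, the left $A$-module $Je_i$ is generated by (the images of) the arrows $\alpha$ with $t(\alpha)=i$, which are precisely the $g(e_{o(\alpha)})$. Hence $\Im(g)=Je_i$, giving exactness at both positions simultaneously.

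The decisive step is exactness in the middle, i.e.\ $\Ker(g)=\Im(f)$. I plan to lift $f$ and $g$ to maps $\tilde{f}$ and $\tilde{g}$ defined by the same formulas but on the corresponding free left $RQ$-modules. The key observation is that every path of length $\geq 1$ ending at $i$ admits a unique factorization $p'\alpha$ with $\alpha$ its final arrow and $p'$ a path ending at $o(\alpha)$; therefore $\tilde{g}\colon\oplus_{\alpha}RQe_{o(\alpha)}\to RQe_i$ is an isomorphism of left $RQ$-modules onto $RQ_{\geq 1}e_i$. Passing to the quotient by $I$ and using $Ae_j=RQe_j/Ie_j$, one obtains $\Ker(g)\cong\tilde{g}^{-1}(Ie_i)/\oplus_{\alpha}Ie_{o(\alpha)}$ and $\Im(f)\cong(\Im(\tilde{f})+\oplus_{\alpha}Ie_{o(\alpha)})/\oplus_{\alpha}Ie_{o(\alpha)}$, so exactness at the middle reduces to the equality, inside $RQe_i$,
\[
Ie_i=\sum_{r\in\rho e_i}RQ\cdot r\;+\;\sum_{\alpha:\,t(\alpha)=i}I\cdot\alpha.
\]

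The main obstacle is proving this last equality. I would argue as follows: every element of the two-sided ideal $Ie_i$ is a finite $R$-linear combination of monomials $y\cdot r\cdot z$ with $y,z\in RQ$ and $r\in\rho$, and after expanding $z$ on the path basis one may assume $z$ is a single path such that $yrz\in RQe_i$, in particular a path ending at $i$. If $z$ is the trivial path $e_{t(r)}$, then necessarily $t(r)=i$, so $r\in\rho e_i$ and $yrz=yr\in\sum_{r\in\rho e_i}RQ\cdot r$; otherwise $z=z'\alpha$ for a unique final arrow $\alpha$ with $t(\alpha)=i$, and then $yrz=(yrz')\alpha\in I\cdot\alpha$, since $yrz'\in RQ\cdot\rho\cdot RQ=I$. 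The reverse inclusion is immediate. Combined with the isomorphism $\tilde{g}$, this yields the required exactness at the middle term and completes the verification.
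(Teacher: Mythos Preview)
Your argument is correct. Both you and the paper start from the same observation that the lifted map $\tilde g\colon\oplus_\alpha RQe_{o(\alpha)}\to RQ_{\geq 1}e_i$ is an isomorphism of left $RQ$-modules, and both ultimately reduce middle exactness to the same combinatorial fact: every generator $p\,r\,q$ of $Ie_i$ with $q$ a nontrivial path lies in $I\cdot RQ_{\geq 1}e_i$, while those with $q$ trivial force $r\in\rho e_i$. The difference is in how the reduction is carried out. The paper tensors the $RQ$-level resolution with $A$, invokes the long exact sequence of $\mathrm{Tor}^{RQ}$, identifies $\mathrm{Tor}_1^{RQ}(A,Ae_i)\cong Ie_i/I^2e_i$, and then computes the kernel of the induced map $h$ to obtain $\Ker(g)\cong Ie_i/(I^2e_i+I\cdot RQ_{\geq 1}e_i)$. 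You instead perform a direct diagram chase on the quotient maps $RQe_j\twoheadrightarrow Ae_j$, obtaining $\Ker(g)\cong Ie_i/\sum_\alpha I\alpha$ and $\Im(f)\cong\bigl(\sum_{r\in\rho e_i}RQ\cdot r+\sum_\alpha I\alpha\bigr)/\sum_\alpha I\alpha$ without any derived-functor machinery. Since $I^2e_i\subseteq I\cdot RQ_{\geq 1}e_i=\sum_\alpha I\alpha$, the two descriptions of $\Ker(g)$ coincide, and your route is the more elementary one; the paper's Tor approach is more conceptual and connects to the classical description of the second syzygy via $I/I^2$, but is not needed for the bare statement.
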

\begin{proof}
We follow standard ideas which work for unital algebras over a field
(see, e.g., \cite{BK}). The map $g':\oplus_{\alpha\in Q_1,t(\alpha
)=i}RQe_{o(\alpha )}\longrightarrow RQ_{\geq 1}$ mapping $\sum
f_\alpha\rightsquigarrow \sum f_\alpha\alpha$ is an isomorphism of
left $RQ$-modules, thus showing that the sequence
\begin{center}
$0\rightarrow \oplus_{\alpha\in Q_1,t(\alpha )=i}RQe_{o(\alpha
)}\stackrel{g'}{\longrightarrow}RQe_i\stackrel{p}{\longrightarrow}Re_i\rightarrow
0$
\end{center}
is a projective resolution of $Re_i$ as a left $RQ$-module. Applying
the functor $A\otimes_{RQ}?$ to this sequence, we obtain an exact
sequence in $A-\text{Mod}$
\begin{center}
$0\rightarrow
\text{Tor}_1^{RQ}(A,Re_i)\longrightarrow\oplus_{\alpha\in
Q_1,t(\alpha )=i}Ae_{o(\alpha
)}\stackrel{g}{\longrightarrow}Ae_i\stackrel{\bar{p}}{\longrightarrow}Re_i\rightarrow
0$.
\end{center}
The functor $A\otimes_{RQ}?$ leaves unaltered the exact sequence
$0\rightarrow\text{Ker}(\bar{p})\stackrel{incl}{\longrightarrow}
Ae_i\stackrel{\bar{p}}{\longrightarrow}Re_i\rightarrow 0$, which we
view as a sequence of left $RQ$-modules annihilated by $I=<\rho >$.
The long exact sequence of $\text{Tor}$ then gives an exact sequence
in $A-\text{Mod}$
\begin{center}
$\text{Tor}_1^{RQ}(A,Ae_i)\stackrel{u}{\longrightarrow}\text{Tor}_1^{RQ}(A,Re_i)\stackrel{0}{\longrightarrow}Ker(\bar{p})\stackrel{incl}{\longrightarrow}
Ae_i$,
\end{center}
which shows that we have an epimorphism
$\text{Tor}_1^{RQ}(A,Ae_i)\stackrel{u}{\twoheadrightarrow}\text{Tor}_1^{RQ}(A,Re_i)$.
Moreover, by a classical argument as in the case of unital algebras
(see, e.g., \cite[Chapter VIII, Section 1]{CE}), we have an
isomorphism
$\frac{Ie_i}{I^2e_i}\stackrel{\cong}{\longrightarrow}\text{Tor}_1^{RQ}(A,Ae_i)$.
We now consider the composition of morphisms in $A-\text{Mod}$
\begin{center}
$h:\frac{Ie_i}{I^2e_i}\stackrel{\cong}{\longrightarrow}\text{Tor}_1^{RQ}(A,Ae_i)\stackrel{u}{\longrightarrow}\text{Tor}_1^{RQ}(A,Re_i)\longrightarrow
\oplus_{\alpha\in Q_1,t(\alpha )=i}Ae_{o(\alpha )}=\oplus_{\alpha\in
Q_1,t(\alpha )=i}\frac{RQe_{o(\alpha )}}{Ie_{o(\alpha )}}$.
\end{center}
It is easy to check that this map is induced by the restriction to
$Ie_i$ of the canonical map $\bar{h}:RQe_i\longrightarrow
\oplus_{\alpha\in Q_1,t(\alpha )=i}RQe_{o(\alpha )}$ which takes a
path $p=q\alpha$ to $q$, where $\alpha$ is the last arrow of $p$.

If now $x\in Ie_i$ and we write it as $x=\oplus_{\alpha\in
Q_1,t(\alpha )=i}x(\alpha )\alpha$, for uniquely determined
$x(\alpha )\in RQe_{o(\alpha )}$, we have that
$h(x+I^2e_i)=\sum_{\alpha\in Q_1,t(\alpha )=i}(x(\alpha )
+Ie_{o(\alpha )})$. Then $h(x+I^2e_i)=0$ if, and only if, $x(\alpha
)\in Ie_{o(\alpha )}$ for each $\alpha\in Q_1$ with $t(\alpha )=i$.
It follows that $\text{Ker}(h)=\frac{I^2e_i+(\oplus_{\alpha\in
Q_1,t(\alpha )=i}I\alpha )}{I^2e_i}=\frac{I^2e_i+I\cdot RQ_{\geq
1}e_i}{I^2e_i}$.

The above paragraphs show that we have an exact sequence in
$A-\text{Mod}$
\begin{center}
$0\rightarrow\frac{I^2e_i+I\cdot RQ_{\geq
1}e_i}{I^2e_i}\hookrightarrow\frac{Ie_i}{I^2e_i}\stackrel{h}{\longrightarrow}\oplus_{\alpha\in
Q_1,t(\alpha )=i}Ae_{o(\alpha
)}\stackrel{g}{\longrightarrow}Ae_i\stackrel{\bar{p}}{\longrightarrow}Re_i\rightarrow
0$.
\end{center}
This shows that we have an isomorphism
$\tilde{h}:\frac{Ie_i}{I^2e_i+I\cdot RQ_{\geq
1}e_i}\stackrel{\cong}{\longrightarrow}\text{Ker}(g)$ which maps
$\sum_{\alpha\in Q_1,t(\alpha )=i}x(\alpha )\alpha +(I^2e_i+I\cdot
RQ_{\geq 1})$ onto $\sum_{\alpha\in Q_1,t(\alpha )=i}x(\alpha
)e_{o(\alpha )}$. We just need to check that
$\frac{Ie_i}{I^2e_i+I\cdot RQ_{\geq 1}}$ is generated, as a left
$A$-module, by the elements $\bar{r}=r+(I^2e_i+I\cdot RQ_{\geq 1})$,
where $r$ varies in the set $\rho e_i$. Indeed, if this is the case
then we will have a canonical epimorphism of left $A$-modules
$\oplus_{r\in\rho
e_i}Ae_{o(r)}\twoheadrightarrow\frac{Ie_i}{I^2e_i+I\cdot RQ_{\geq
1}e_i}$ mapping $e_{o(r)}\rightarrow\bar{r}$, for each $r\in\rho
e_i$. The composition of this epimorphism followed by $\tilde{h}$
and by the inclusion $\text{Ker}(g)\rightarrow\oplus_{\alpha\in
Q_1,t(\alpha )=i}Ae_{o(\alpha )}$ is just the map $f$ in the
statement and the proof will be finished.

To check what is needed, take $x\in Ie_i$. Then we can express it in
the form $x=\sum_{1\leq j\leq m}\lambda_jp_jr_jq_j$, where $0\neq
\lambda_j\in R$, $r_j\in\rho$ is a relation and $p_j$ and $q_j$ are
paths with $t(p_j)=o(r_j)$ and $o(q_j)=t(r_j)$, $t(q_j)=i$. Note
that if $\text{length}(q_j)>0$ then $p_jr_jq_j$ is in $I\cdot
RQ_{\geq 1}e_i$ and, hence, it becomes zero in
$\frac{Ie_i}{I^2e_i+I\cdot RQ_{\geq 1}e_i}$. Therefore we can assume
that $\text{length}(q_j)=0$ (i.e. $q_j=e_i$), for all $j=1,...,m$.
But then all the $r_j$ are in $\rho e_i$ and we can rewrite $x$ as
$\sum_{r\in\rho e_i}a(r)r$, for some elements $a(r)\in RQe_{o(r)}$.
Then $\frac{Ie_i}{I^2e_i+I\cdot RQ_{\geq 1}}$ is generated by the
$\bar{r}$ as a left $A$-module, as desired.
\end{proof}
\begin{prop} \label{prop.algebra with quiver-relations incomplete}
Let $Q$ be a quiver which is connected, left locally finite,
downward directed, narrow and has no oriented cycles, and let $A$
be an algebra locally finite presented by the quiver with relations
$(Q,\rho )$. Then the following assertions hold:
\begin{enumerate}
\item $Je_i$ is finitely presented and $J^me_i$ is finitely
generated, as left ideals, for all $i\in Q_0$ and all $m>0$
\item The order relation $\preceq$ is the smallest preorder relation in
$Q_0$
containing all the pairs $(i,j)$ such that $e_iAe_j\neq 0$
\item For all vertices $j\preceq i$, the left $A$-module
$\frac{Ae_i}{\sum_{k\preceq j}Ae_kAe_i}$ is annihilated by some
$J^n$, with $n>0$
\end{enumerate}
\end{prop}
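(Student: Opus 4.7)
My plan is to prove the three assertions separately, combining the canonical projective resolution of Lemma \ref{lem.canonical projective resolution} with the structural hypotheses on $Q$ and the standard convention that $e_iAe_j$ is spanned by residues of paths from $i$ to $j$ in $Q$ (where for a path $p$ we have $e_ipe_j=p$ iff $o(p)=i$ and $t(p)=j$).

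For assertion (1), I would read the finite presentation of $Je_i$ directly off Lemma \ref{lem.canonical projective resolution}. Left local finiteness of $Q$ makes the set $\{\alpha\in Q_1:t(\alpha)=i\}$ finite, and the hypothesis that $A$ is locally finitely presented by $(Q,\rho)$ forces $\rho e_i$ to be finite; hence both of the leftmost projectives in that resolution are finitely generated. Since $Je_i=\Ker(Ae_i\twoheadrightarrow Be_i)$ coincides with the image of the map $g$ from the resolution, the initial portion of the sequence gives the required finite presentation of $Je_i$. For $J^me_i$ I would observe that, as a left ideal, it is generated by residues of paths of length exactly $m$ ending at $i$; iterating left local finiteness $m$ times from the vertex $i$ shows that the set of such paths is finite. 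This part is essentially routine.

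For assertion (2), I will prove the two inclusions between $\preceq$ and the preorder $\preceq_E$ generated by $E:=\{(i,j):e_iAe_j\neq 0\}$. If $e_iAe_j\neq 0$, then at least one path from $i$ to $j$ must exist in $Q$, so $E\subseteq\preceq$, and since $\preceq$ is already a preorder this gives $\preceq_E\subseteq\preceq$. In the other direction I would use that every element of $\rho$ is an $R$-linear combination of paths of length $\geq 2$, so no individual arrow lies in $I=\langle\rho\rangle$; thus, traversing a path $\alpha_1\cdots\alpha_n$ witnessing $i\preceq j$, each consecutive pair of vertices belongs to $E$, and transitivity produces $(i,j)\in\preceq_E$. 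Reflexivity is automatic because $e_i\in e_iAe_i$.

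Assertion (3) is the main step. Since the quotient $Ae_i/\sum_{k\preceq j}Ae_kAe_i$ is cyclic, generated by the residue of $e_i$, it suffices to produce $n>0$ with $J^ne_i\subseteq\sum_{k\preceq j}Ae_kAe_i$. The left ideal $J^ne_i$ is spanned by residues of paths of length $\geq n$ ending at $i$, so I would argue vertex by vertex on such a path $p=\alpha_1\cdots\alpha_\ell$: if $p$ visits some vertex $k\preceq j$, a factorization $p=xy$ with $x\in Ae_k$ and $y\in e_kAe_i$ displays $p\in Ae_kAe_i$. The main obstacle is to bound $\ell$ in the opposite case, when no visited vertex is $\preceq j$; then every vertex of $p$ lies in $V':=\{v\in Q_0:v\preceq i\text{ and }v\not\preceq j\}$, which is finite by the narrowness of $Q$, while the hypothesis of no oriented cycles forces the $\ell+1$ visited vertices of $p$ to be pairwise distinct. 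These two facts together give $\ell+1\leq|V'|$, so taking $n:=|V'|$ does the job. The delicate point is precisely the interplay between narrowness and acyclicity: either hypothesis alone would fail to bound the path length.
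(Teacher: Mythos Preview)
Your proof is correct and follows essentially the same route as the paper's: the same use of Lemma~\ref{lem.canonical projective resolution} and left local finiteness for (1), the same observation that arrows survive in $A$ for (2), and the same narrowness-plus-acyclicity pigeonhole for (3). Your version of (3) is in fact slightly cleaner, since you bound the path length directly by $|V'|$ via distinctness of vertices, whereas the paper additionally invokes left local finiteness there (which is not really needed once one notices, as you implicitly do, that any path from a vertex of $V'$ to $i$ stays entirely inside $V'$).
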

\begin{proof}
A path $p$ will be called a 'nonzero path' when its image by the
projection $RQ\longrightarrow RQ/I=A$ is nonzero, where $I=<\rho >$.

1) $J^me_i$ is generated, as a left ideal,
by all nonzero paths of length $m$ with terminus $i\in Q_0$. The
left locally finite condition of $Q$ guarantees that there are only
a finite number of such paths.

With the notation of the preceding lemma, we have
$Im(g)=Ker(p)=Je_i$. The fact that $Je_i$ is finitely presented as a
left ideal follows from lemma \ref{lem.canonical projective
resolution} and the finiteness of $\rho e_i$.

2) $\preceq$ is the smallest preorder relation in $Q_0$ containing
all the pairs $(i,j)\in Q_0\times Q_0$ such that there is an arrow
$i\rightarrow j$ in $Q$. For such a pair it always happens that
$e_iAe_j\neq 0$ because a linear combination of arrows is never in
$I=<\rho >$. Conversely, if $e_iAe_j\neq 0$ then there is a nonzero
path $p$ such that $o(p)=i$ and $t(p)=j$, which implies that
$i\preceq j$.

3) Let $j\prec i$ be any vertices. By the narrow condition of $Q$,
there are $k_1,...,k_r\in Q_0$ such that
\begin{center}
$\{k\in Q_0:$ $k\preceq i\text{ and }k\not\preceq
j\}=\{k_1,...,k_r\}$.
\end{center}
Now the left locally finite condition of $Q$ and the fact that $Q$
has no oriented cycles imply that the paths with origin a $k_t$ and
terminus $i$ have a maximal length $n$. Then any path $p$ in $Q$
with $t(p)=i$ and length $m>n$ has the property that $o(p)\preceq
j$. Then we have $p\in\sum_{k\preceq j}Ae_kAe_i$, and hence
$J^me_i\subseteq \sum_{k\preceq j}Ae_kAe_i$.
\end{proof}
Recall that a hereditary torsion class $\mathcal{T}'$ in a
Grothendieck category $\mathcal{G}$ is itself a Grothendieck
category. Indeed, it is clearly abelian, with kernels and cokernels
of morphisms calculated as in $\mathcal{G}$, and it is cocomplete
with coproducts also calculated as in $\mathcal{G}$. Moreover, the
inclusion functor $j:\mathcal{T}'\hookrightarrow\mathcal{G}$
preserves colimits, since it is a left adjoint functor, and it is
exact. Then direct limits in $\mathcal{T}'$ are exact because they
are so in $\mathcal{G}$. Finally, if $G$ is a generator of
$\mathcal{G}$, then the epimorphic images of $G$ which are in
$\mathcal{T}'$ form a set of generators of $\mathcal{T}'$.

We are now ready to prove the following result, where the grading
considered in $RQ$ is the length grading, i.e., the one for which
each path is homogeneous of degree equal to its length.
\begin{teor} \label{teor.modules-modulo-locally-finite}
Let $A$ be an algebra with enough idempotents locally finitely
presented by the quiver with relations $(Q,\rho )$, where $Q$ is
connected, locally finite and has no oriented cycles. Let
$A=B\oplus J$ its canonical Wedderburn-Malcev type decomposition and
let $\mathcal{T}$ be the class of torsion $A$-modules, whose torsion
radical is denoted by $t$. The following assertions hold:
\begin{enumerate}
\item If $\rho$ consists of homogeneous relations, $t(A)=0$ and, for
each $i\in Q_0$, there are only finitely many vertices
$j$ such that $i\preceq j$, then $\mathcal{T}$ is a locally finitely
presented Grothendieck category with no nonzero flat objects.
\item If $Q$ is downward directed and narrow and
$\Ext_A^1(Be_i,\frac{Ae_j}{t(A)e_j})=0$, for all $i,j\in Q_0$, then
$\mathcal{G}/\mathcal{T}$ is a locally finitely presented
Grothendieck category with no nonzero flat objects.
\end{enumerate}
\end{teor}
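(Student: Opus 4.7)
The overall strategy is to reduce Part 2 to Proposition \ref{prop.no flats in A-Mod/lfd} and Part 1 to a direct application of Proposition \ref{prop.no flats in densely stratified} to $\mathcal{T}$ itself. In both cases, the preliminary ingredient is that each $Je_i$ is finitely presented and each $J^me_i$ is finitely generated as left ideals (by Proposition \ref{prop.algebra with quiver-relations incomplete}(1), whose hypotheses on $Q$ are verified in both parts), so Lemma \ref{lem.Mod-modulo-lfd has no flats} applies: $\mathcal{T}$ is a hereditary torsion class of finite type in $A-\text{Mod}$, and $\Im(\iota)$ consists of those $Y$ satisfying $\Hom_A(Be_i,Y)=\Ext_A^1(Be_i,Y)=0$ for every $i\in Q_0$.

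For Part 2, the hypothesis $\Ext_A^1(Be_i,Ae_j/t(A)e_j)=0$ together with the automatic vanishing $\Hom_A(Be_i,Ae_j/t(A)e_j)=0$ (since $Ae_j/t(A)e_j\in\mathcal{T}^\perp$) places $Ae_j/t(A)e_j$ in $\Im(\iota)$, whence $\Ext_A^1(T,Ae_j/t(A)e_j)=0$ for every $T\in\mathcal{T}$ and in particular for $T=Ae_i/J^me_i$. I then verify the three conditions of Proposition \ref{prop.no flats in A-Mod/lfd}. Condition (1) follows from Proposition \ref{prop.algebra with quiver-relations incomplete}(2) together with the hypothesis that $Q$ is downward directed (the no-minimum condition being understood as implicit in the statement). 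Condition (2) is vacuous: a nonzero morphism $Ae_i\to Ae_j/t(A)e_j$ forces $e_iAe_j\neq 0$ and hence $i\preceq j$, contradicting $j\prec i$ since $\preceq$ is a partial order in the absence of cycles; and the Ext-part vanishes universally as just observed. Condition (3) is exactly Proposition \ref{prop.algebra with quiver-relations incomplete}(3).

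For Part 1, I take $\mathcal{S}=\bigcup_{n\geq 1}\mathcal{S}_n$ with $\mathcal{S}_n=\{Ae_i/J^ne_i:i\in Q_0\}$, indexed by $I=\mathbb{Z}_{\geq 1}$ endowed with the reverse order, which is downward directed without minimum. Each $Ae_i/J^ne_i$ is finitely presented in $A-\text{Mod}$ (since $Ae_i$ is finitely generated projective and $J^ne_i$ is finitely generated) and, by closure of $\mathcal{T}$ under filtered colimits, also in $\mathcal{T}$; a standard argument shows that $\mathcal{T}$ is locally finitely presented with $\mathcal{S}$ as a set of generators. Condition 3 of Definition \ref{defi.(densely) stratified} is immediate via the quotient map $Ae_i/J^je_i\twoheadrightarrow Ae_i/J^ne_i$ available whenever $j>n$. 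For condition 2, one computes that $\Hom_A(Ae_i/J^ne_i,Ae_l/J^je_l)\neq 0$ (with $j>n$) requires a nonzero path $i\to l$ of length in $[j-n,j-1]$; the hypothesis that $\{j\in Q_0:i\preceq j\}$ is finite, combined with the absence of cycles and right local finiteness of $Q$, bounds the maximum length of a path starting at $i$, so only finitely many indices $j>n$ can produce a nonzero Hom.

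The main technical point lies in Part 1: exhibiting every torsion module as a filtered colimit of finitely presented objects of $\mathcal{T}$ (to secure the locally-finitely-presented conclusion) is a careful but routine colimit argument that uses the finite presentation of $Ae_i/J^me_i$ in $A-\text{Mod}$ and the finite-type property of $\mathcal{T}$. The hypothesis $t(A)=0$ plays a dual role: it ensures both that no $Ae_i$ itself lies in $\mathcal{T}$ (otherwise this projective $A$-module would remain projective, and hence flat, in $\mathcal{T}$, contradicting the conclusion) and that $J^n$ acts faithfully on each $Ae_i$, so the Hom computation in Part 1 really is governed by path lengths as claimed.
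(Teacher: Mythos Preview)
Your proposal is correct and close in spirit to the paper's argument, but with two organizational differences worth noting.

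For Part 2, you take a slightly cleaner route than the paper: you observe that the hypothesis $\Ext_A^1(Be_i,Ae_j/t(A)e_j)=0$ together with $Ae_j/t(A)e_j\in\mathcal{T}^\perp$ places $Ae_j/t(A)e_j$ in $\Im(\iota)$ via Lemma~\ref{lem.Mod-modulo-lfd has no flats}, whence $\Ext_A^1(T,Ae_j/t(A)e_j)=0$ for \emph{all} $T\in\mathcal{T}$ at once. The paper instead argues by reverse induction on $k$ that $\Ext_A^1(J^ke_i/J^me_i,Ae_j/t(A)e_j)=0$, which amounts to re-proving a special case of the $\Im(\iota)$ characterization. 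Your shortcut is legitimate and arguably preferable.

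For Part 1, you invoke Proposition~\ref{prop.no flats in densely stratified} with the stratification $\mathcal{S}_n=\{Ae_i/J^ne_i:i\in Q_0\}$ indexed by $\mathbb{Z}_{\geq 1}$ under reverse order, whereas the paper carries out a direct argument: it fixes a nonzero $\bar f:Ae_i/J^ke_i\to F$, chooses $m>k+t$ (with $t$ the maximal length of a path out of $i$), and shows $\Hom_A(Ae_i/J^ke_i,Ae_j/J^me_j)=0$ for all $j$. This is precisely the proof of Proposition~\ref{prop.no flats in densely stratified} unrolled, so the two arguments are the same in substance. Your Hom computation (nonzero Hom forces a nonzero element of $e_iAe_l$ in degrees $[j-n,j-1]$) is the graded version of the paper's computation and uses both the homogeneity of $\rho$ and $t(A)=0$ exactly as you indicate.

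One citation needs adjustment: in Part 1 the quiver $Q$ is \emph{not} assumed downward directed or narrow, so the hypotheses of Proposition~\ref{prop.algebra with quiver-relations incomplete} are not all met. The conclusion you want (assertion (1) of that proposition) only uses left local finiteness and finiteness of each $\rho e_i$, so the argument goes through, but you should cite the \emph{proof} of Proposition~\ref{prop.algebra with quiver-relations incomplete}(1) rather than the proposition itself---which is exactly what the paper does.
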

\begin{proof}
1) By the proof of the preceding proposition, we know that $Je_i$ is
finitely presented and $J^me_i$ is finitely generated, as left
ideals, for all $i\in Q_0$ and all $m>0$. Then, by lemma
\ref{lem.Mod-modulo-lfd has no flats}, we know that $\mathcal{T}$ is
a hereditary torsion class of finite type in $A-\text{Mod}$.
Moreover $\mathcal{S}:=\{\frac{Ae_i}{J^me_i}:$ $i\in Q_0\text{,
}m>0\}$ is a set of finitely presented generators of $\mathcal{T}$.
On the other, hand each finitely generated $A$-module $T$ in
$\mathcal{T}$ is annihilated by some power $J^m$ and, hence, it is
canonically a $A/J^m$-module. It follows that it is a direct limit
of finitely presented $A/J^m$-modules, all of which are also
finitely presented objects of $\mathcal{T}$. The fact that each
object of $\mathcal{T}$ is a direct union of its finitely generated
submodules implies, by suitable arrangement of direct systems, that
each object of $\mathcal{T}$ is a direct limit of finitely presented
objects of $\mathcal{T}$. Therefore $\mathcal{T}$ is a locally
finitely presented Grothendieck category.

Note that, since we have epimorphisms
$\frac{Ae_i}{J^{n+1}e_i}\twoheadrightarrow\frac{Ae_i}{J^ne_i}$ for
all $i\in Q_0$ and $n>0$, it follows that the set
$\mathcal{S}(m):=\{\frac{Ae_i}{J^me_i}:$ $i\in Q_0\}$ is a set of
generators of $\mathcal{T}$, for each integer $m>0$. Suppose now
that there exists a flat object $F\neq 0$ in $\mathcal{T}$. Let us
fix any nonzero morphism $f:Ae_i\longrightarrow F$ in
$A-\text{Mod}$, which exists for some $i\in Q_0$. Then we can fix
an integer $k>0$ such that $J^ke_i\subseteq Ker(f)$ because
$\text{Im}(f)$ is cyclic and all cyclic modules in $\mathcal{T}$ are
annihilated by some power of $J$. We then consider the induced
morphism $\bar{f}:\frac{Ae_i}{J^ke_i}\longrightarrow F$. By
hypothesis, there are only finitely many vertices $j\in Q_0$ such
that $i\preceq j$. This together with the locally finite condition
of $Q$ imply that there are only finitely many paths in $Q$ with
origin $i$. Let $t$ be the maximal length of these paths. We now fix
any integer $m>k+t$ and consider an epimorphism $p:\oplus_{j\in
Q_0}\frac{Ae_j}{J^me_j}^{(H_j)}\twoheadrightarrow F$. Due to the
flatness of $F$, this epimorphism is pure and, hence, $\bar{f}$
should factor through $p$. But this is impossible since
$\Hom_A(\frac{Ae_i}{J^ke_i},\frac{Ae_j}{J^me_j})=0$, for all $j\in
Q_0$. Indeed, the existence of a nonzero morphism
$g:\frac{Ae_i}{J^ke_i}\longrightarrow\frac{Ae_j}{J^me_j}$ implies
the existence of a morphism
$\frac{Ae_i}{J^ke_i}\longrightarrow\frac{Ae_j}{J^me_j}[r]$ in
$A-Gr$, for a suitable integer $r$. Then we can assume that $g$ is a
graded morphism of some degree. It is then determined by an element
$0\neq x\in e_iAe_j$ which is an $R$-linear combination of paths
$i\rightarrow ...\rightarrow j$ of the same length, say $u$, such
that $px\in J^me_j$, for all paths $p$ of length $k$ with $t(p)=i$.
Then we have $i\preceq j$ and, hence, necessarily $u\leq t$. But if
$px\neq 0$, then $deg(px)=deg(p)+deg(x)=k+u\leq k+t<m$. This is
impossible since $px\in J^me_j$ and all nonzero homogeneous elements
of $J^me_j$ have degree $\geq m$. It follows that $px=0$ in $A$, for
all paths $p$ of length $k$ with $t(p)=i$, or, equivalently, that
$J^kx=0$. The fact that $t(A)=0$ implies that this can only happen
when $x=0$, which is a contradiction.

2) If $\Ext_A^1(Be_i,\frac{Ae_j}{t(A)e_j})=0$, for all $i,j\in Q_0$,
then, for any integer $m>0$, one easily shows by reverse induction
that $\Ext_A^1(\frac{J^ke_i}{J^me_i},\frac{Ae_j}{t(A)e_j})=0$ for
$k=m,m-1,...,0$. It follows that
$\Ext_A^1(\frac{Ae_i}{J^me_i},\frac{Ae_j}{t(A)e_j})=0$, for all
$i,j\in Q_0$. On the other hand, if
$\Hom_A(Ae_i,\frac{Ae_j}{t(A)e_j})\neq 0$ then $e_iAe_j\cong
\Hom_A(Ae_i,Ae_j)$ is nonzero, due to the projective condition of
$Ae_i$. This implies that $i\preceq j$. Then the set of indices
$j\prec i$ for which either $\Hom_A(Ae_i,\frac{Ae_j}{t(A)e_j})\neq
0$ or $\Ext_A^1(\frac{Ae_i}{J^me_i},\frac{Ae_j}{t(A)e_j})\neq 0$,
for some integer $m>0$, is the empty set. It follows that condition
2 in proposition \ref{prop.no flats in A-Mod/lfd} holds. Then, by
using proposition \ref{prop.algebra with quiver-relations
incomplete}, we see that all the hypotheses of proposition
\ref{prop.no flats in A-Mod/lfd} hold. Therefore
$A-\text{Mod}/\mathcal{T}$ is a locally finitely presented
Grothendieck category without nonzero flat objects.
\end{proof}
If $Q$ is a quiver and $K$ is a field, then the path algebra $KQ$
has a canonical structure of coalgebra, which we denote $K^\square
Q$, where the counit $\epsilon :K^\square Q\longrightarrow K$ is the
$K$-linear map which vanishes on all paths of positive length and
takes $e_i\rightsquigarrow 1$, for all $i\in Q_0$, and the
comultiplication $\Delta :K^\square Q\longrightarrow K^\square
Q\otimes K^\square Q$ maps $e_i\rightsquigarrow e_i\otimes e_i$, for
each $i\in Q_0$, and
\begin{center}
$\Delta (p)=e_{o(p)}\otimes p+\sum_{1\leq s\leq
m-1}\beta_1...\beta_s\otimes\beta_{s+1}...\beta_m+p\otimes
e_{t(p)}$,
\end{center}
for each
path of positive length $p=\beta_1\cdot ...\cdot\beta_m$.

If $K$ is a field, $(Q,\rho )$ is a quiver with relations over $K$
is a field and $I=<\rho >$ is the ideal of $KQ$ generated by $\rho$,
then
the associated ($K$-)coalgebra $C (Q,\rho )$ is the subcoalgebra
$I^\perp$ of
$K^\square Q$, where $I^\perp$ denotes the right orthogonal of $I$
with respect to the nondegenerate bilinear form $(-,-):KQ\times
K^\square Q\longrightarrow K$ which maps a pair $(p,q)$ of paths to
the Kronecker element $\delta_{pq}$. Obviously, if $\rho$ consists
of homogeneous elements, then $I$ is a graded ideal and the
coalgebra $C(Q,\rho )$ is a graded coalgebra (i.e. $\Delta
(C_n)\subseteq\sum_{r+s=n}C_r\otimes C_s$). Example 2.11 in
\cite{CS} gives a path coalgebra whose category of comodules has no
nonzero flat objects. Our next result shows that this is not an
exceptional fact for coalgebras. The reader is referred to
\cite{Sim} for the terminology and results that we use.
\begin{cor} \label{cor.comodules without flat objects}
Let $C(Q,\rho )$ be the $K$-coalgebra associated to the quiver with
relations $(Q,\rho )$. Suppose that the following conditions hold:
\begin{enumerate}
\item $Q$ is connected, locally finite and has no
oriented cycles. Moreover, for each $i\in Q_0$, there are only
finitely many $j\in Q_0$ such that $i\preceq j$ \item $\rho$
consists of homogeneous elements with respect to the length grading
on $KQ$ and, for each $i\in Q_0$, the set $\rho e_i$ is finite.
\item The algebra $A=KQ/<\rho >$ has no nonzero element $x$ such
that $A_1x=0$ (equivalently,
$C)$ contains maximal right subcomodule)
\end{enumerate}
Then the category $\text{Comod}-C$ of right $C$-comodules contains
no nonzero flat objects.
\end{cor}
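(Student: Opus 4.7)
The plan is to identify $\text{Comod}-C$ with the Grothendieck category $\mathcal{T}$ of torsion left $A$-modules, where $A=KQ/\langle\rho\rangle$ with its canonical Wedderburn-Malcev type decomposition $A=B\oplus J$, and then to invoke Theorem \ref{teor.modules-modulo-locally-finite}(1). The first step rests on the classical equivalence between right $C$-comodules and rational left $C^*$-modules; under the embedding $A\hookrightarrow C^*$ induced by the nondegenerate pairing $(-,-):KQ\times K^{\square}Q\longrightarrow K$, a rational $C^*$-module corresponds exactly to an $A$-module each of whose elements is annihilated by some power of $J=KQ_{\geq 1}/\langle\rho\rangle$. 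The fundamental theorem of comodules together with the local finiteness of $Q$ provide the technical backbone of this equivalence; the finite quiver case is treated in \cite{Sim}, and the present situation is obtained by passing to directed unions of finite-dimensional subcomodules (on one side) and of finitely generated submodules killed by a power of $J$ (on the other).

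Granting the equivalence $\text{Comod}-C\simeq\mathcal{T}$, I would check that the hypotheses of Theorem \ref{teor.modules-modulo-locally-finite}(1) are met. Conditions (1) and (2) of the corollary directly supply the quiver-theoretic hypotheses of the theorem ($Q$ connected, locally finite, acyclic, with $\{j\in Q_0:i\preceq j\}$ finite for every $i$), the homogeneity of $\rho$, and the local finite presentation of $A$ by $(Q,\rho)$. The only remaining requirement is $t(A)=0$. Assuming for contradiction that $t(A)\neq 0$, pick a nonzero $x\in t(A)$ and let $n\geq 1$ be minimal with $J^nx=0$. By minimality there exists $a\in J^{n-1}$ with $y:=ax\neq 0$, and then $Jy\subseteq J^nx=0$, so in particular $A_1y=0$. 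This contradicts condition (3) of the corollary, so $t(A)=0$. Theorem \ref{teor.modules-modulo-locally-finite}(1) therefore yields that $\mathcal{T}$ is a locally finitely presented Grothendieck category with no nonzero flat object, and the same conclusion transfers to $\text{Comod}-C$ via the equivalence, since flatness is defined purely through finitely presented objects and pure epimorphisms.

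The principal difficulty lies in the first step: making the equivalence $\text{Comod}-C\simeq\mathcal{T}$ rigorous when $Q$ is infinite, as the coalgebra $C$ and the algebra $A$ are then both infinite-dimensional. Once this piece of folklore is in place, the rest of the argument is a routine verification of hypotheses together with the short contradiction that pins down $t(A)=0$.
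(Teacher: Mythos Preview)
Your proposal is correct and follows the same overall strategy as the paper: identify $\text{Comod}-C$ with the torsion class $\mathcal{T}\subseteq A\text{-Mod}$ and then apply Theorem~\ref{teor.modules-modulo-locally-finite}(1). The only noteworthy difference is in how the equivalence $\text{Comod}-C\simeq\mathcal{T}$ is established. The paper routes it through the opposite quiver: left $A$-modules are identified with $\text{Rep}_K(Q^{op},\rho^{op})$, so that $\mathcal{T}\cong\text{Rep}_K^{lnlf}(Q^{op},\rho^{op})$, and then \cite[Theorem~4.5]{Sim} is invoked to obtain $\text{Rep}_K^{lnlf}(Q^{op},\rho^{op})\cong C^{op}\text{-Comod}\cong\text{Comod}-C$. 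Your route via rational left $C^*$-modules and the embedding $A\hookrightarrow C^*$ is an alternative path to the same equivalence; it is more self-contained but, as you correctly flag, requires care in the infinite-dimensional setting, which is precisely what the citation to Simson sidesteps. Your explicit derivation of $t(A)=0$ from condition~(3) is a detail the paper leaves to the reader.
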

\begin{proof}
Note that if $A=KQ/<\rho >$, then the category of left $A$-modules
is equivalent to the category $Rep_K(Q^{op},\rho^{op})$ of
$K$-representations of the opposite quiver with relations
$(Q^{op},\rho^{op})$. Under this equivalence, due to the properties
of our quiver, the class $\mathcal{T}$ of torsion $A$-modules
corresponds to the class $\text{Rep}_K^{lnlf}(Q^{op},\rho^{op})$ of
locally finite nilpotent representations of $(Q^{op},\rho^{op})$ in
the terminology of \cite{Sim}. Clearly $C(Q^{op},\rho^{op})$ is the
opposite coalgebra $C:=C(Q,\rho )$, i.e., if we use the classical
$\Sigma$-notation of Sweedler \cite{Swee} and $\Delta
:C\longrightarrow C\otimes C$ is the comultiplication of $C$, then
$\Delta^o:C^{op}\longrightarrow C^{op}\otimes C^{op}$ is given by
$\Delta^o(c^o)=\sum c_2^o\otimes c_1^o$, provided $\Delta (c)=\sum
c_1\otimes c_2$, and viceversa.

Then, using \cite[Theorem 4.5]{Sim}, we have equivalences of
categories
\begin{center}
$\mathcal{T}\stackrel{\cong}{\longleftrightarrow}\text{Rep}_K^{lnlf}(Q^{op},\rho^{op})\stackrel{\cong}{\longleftrightarrow}C^{op}-\text{Comod}
\stackrel{\cong}{\longleftrightarrow}\text{Comod}-C$,
\end{center}
and the result follows from theorem
\ref{teor.modules-modulo-locally-finite}.
\end{proof}
We finally give an example, derived from corollary
\ref{cor.examples}, where assertion 2 of theorem
\ref{teor.modules-modulo-locally-finite} applies. To any $n+1$
variables $x_0,...,x_n$, we associate the quiver
\medskip\par\noindent
$$\xymatrix{ \cdots -1 \ar @{->} @< 9pt> [rr]^(.6){x_{-1,0}}
\ar @{->} @<-9pt> [rr]_(.6){x_{-1,n}}^(.6){\vdots} & & 0 \ar @{->} @< 9pt>
[rr]^(.5){x_{0,0}}
\ar @{->} @<-9pt> [rr]_(.5){x_{0,n}}^(.5){\vdots} & & 1\ar @{->} @< 9pt>
[rr]^(.45){x_{1,0}}
\ar @{->} @<-9pt> [rr]_(.45){x_{1,n}}^(.45){\vdots} & & 2 \cdots } $$
\medskip\par\noindent
That is, we have $Q_0=\mathbb{Z}$ and, for each $k\in\mathbb{Z}$, we
have $n+1$ arrows $x_{k,j}:k\rightarrow k+1$, where $j=0,1,...,n$.
For each $k\in\mathbb{Z}$ and each (noncommutative) monomial $\mu
=x_{j_1}...x_{j_d}$ of degree $d$ in the $x_j$, we have a path
$\mu_k:k\rightarrow k+1\rightarrow ...\rightarrow k+d$ of length $d$
given by $\mu_k :x_{k,j_1}...x_{k+d-1,j_d}$. If now $h\in
R<x_0,x_1,...,x_n>$ is any homogeneous element of degree $d$ in the
free $R$-algebra on the $x_j$, written as $h=\sum\lambda_\mu\mu$,
where the sum ranges over the set of monomials $\mu$ of degree $d$
and $\lambda_\mu\in R$, then we put $h_k:=\sum\lambda_\mu\mu_k$.
This is a well-defined element of $RQ$.
\begin{cor} \label{cor.covering of a variety}
Let $R$ be a commutative Noetherian domain, let $\mathbf{p}$ be a prime
homogeneous ideal of $A=R[x_0,x_1,...,x_n]$
contained in $(x_0,x_1,...,x_n)^2$ and let $I_\mathbf{p}$ be the
homogeneous ideal of $RQ$ generated by all the congruencies
$x_{k,i}x_{k+1,j}-x_{k,j}x_{k+1,i}$ and all the $h_k$, where
$k\in\mathbb{Z}$ and $h$ is a homogeneous polynomial in
$\mathbf{p}$. Put $\tilde{A}=RQ/I_\mathbf{p}$ and let $\mathcal{T}$ be
the category of torsion left $\tilde{A}$-modules.

If the height of $(x_0,...,x_n)/\mathbf{p}$ in $A$ is $\geq 2$
and $A$ is either integrally closed or
Cohen-Macaulay, then $\tilde{A}-\text{Mod}/\mathcal{T}$ is a locally
finitely presented Grothendieck category with no nonzero flat
objects.
\end{cor}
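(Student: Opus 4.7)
The plan is to apply Theorem \ref{teor.modules-modulo-locally-finite}(2) to $\tilde{A}=RQ/I_\mathbf{p}$, so first I would verify the purely combinatorial hypotheses on the quiver $Q$ and the presentation $\rho$. By construction $Q_0=\mathbb{Z}$ with $n+1$ arrows $k\to k+1$ for each $k$, so $Q$ is connected, locally finite and has no oriented cycles; the induced preorder $\preceq$ therefore coincides with $\leq$ on $\mathbb{Z}$, which is a downward directed total order without minimum and is narrow because $\{k\in\mathbb{Z}:j<k\leq i\}$ is finite for every $j\leq i$. The generators of $I_\mathbf{p}$ are homogeneous in the length grading of $RQ$, and since $A$ is Noetherian the graded ideal $\mathbf{p}$ is finitely generated; hence each $\rho e_i$ is finite and $\tilde{A}$ is locally finitely presented by $(Q,\rho)$.

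The heart of the argument is an equivalence of categories $\tilde{A}\text{-Mod}\simeq S\text{-Gr}$, where $S:=A/\mathbf{p}$. The commutativity relations $x_{k,i}x_{k+1,j}-x_{k,j}x_{k+1,i}$ force the product of any path $k\to k+1\to\cdots\to k+d$ to depend only on the multiset of arrows used, and the relations $h_k$ (for homogeneous $h\in\mathbf{p}$) kill exactly the defining relations of $S$, yielding canonical isomorphisms $e_k\tilde{A}e_{k+d}\cong S_d$ compatible with multiplication. Consequently a unital $\tilde{A}$-module $M=\bigoplus_k e_kM$ is precisely a $\mathbb{Z}$-graded $S$-module via $N_d:=e_{-d}M$, and under this equivalence $\tilde{A}e_j$ corresponds to a shift of ${}_SS$, $Be_j=Re_j$ corresponds to $S_0=R$ placed in a single degree, and the torsion class $\mathcal{T}$ of $\tilde{A}\text{-Mod}$ corresponds to the class of virtually finitely graded $S$-modules of Section \ref{section.Quotient categories of graded modules without flat objects}. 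Since $S$ is a graded integral domain with $S_{\geq 1}\neq 0$, its torsion radical satisfies $t(S)=0$, and therefore $t(\tilde{A})e_j=0$ and $\tilde{A}e_j/t(\tilde{A})e_j\cong\tilde{A}e_j$ for every $j$.

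What remains is to check $\Ext^1_{\tilde{A}}(Be_i,\tilde{A}e_j)=0$ for all $i,j\in\mathbb{Z}$. Under the equivalence above this group is a homogeneous component of $\text{EXT}^1_S(R,S)$, and since $R=S/S_{\geq 1}$ is finitely presented over the Noetherian ring $S$, the forgetful functor identifies $\text{EXT}^1_S(R,S)$ with the ungraded $\Ext^1_S(R,S)$. Thus it suffices to prove $\Ext^1_S(R,S)=0$; but $S=A/\mathbf{p}$ is a positively graded commutative Noetherian domain, finitely generated by $S_1$ as an $R$-algebra (because $\mathbf{p}\subset(x_0,\ldots,x_n)^2$), with $S_{\geq 1}=(x_0,\ldots,x_n)/\mathbf{p}$ of height $\geq 2$, so the integrally closed or Cohen--Macaulay hypothesis yields $\Ext^1_S(R,S)=0$ by precisely the two arguments already given in the proof of Corollary \ref{cor.examples} (Serre's condition $S_2$ in the first case, the equality $\mathrm{depth}(S_{\geq 1},S)=\mathrm{ht}(S_{\geq 1})$ in the second). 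Applying Theorem \ref{teor.modules-modulo-locally-finite}(2) then concludes the proof. The hardest step is the equivalence $\tilde{A}\text{-Mod}\simeq S\text{-Gr}$: one must verify that the commutativity and polynomial relations collapse each slice $e_k\tilde{A}e_{k+d}$ exactly onto $S_d$ with no further identifications, and that the two torsion classes correspond under this identification; everything else is either combinatorial bookkeeping on $Q$ or a direct invocation of results already assembled in the preceding sections.
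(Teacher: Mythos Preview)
Your proposal is correct and rests on the same underlying observation as the paper---namely that $\tilde{A}$ is the $\mathbb{Z}$-Galois cover of $S:=R[x_0,\dots,x_n]/\mathbf{p}$ viewed as a one-vertex quiver algebra---but you exploit this fact differently. The paper invokes the push-down functor $F:\tilde{A}\text{-Mod}\to S\text{-Mod}$ from covering theory (citing \cite{Gabriel2,MVdlP,CM,As}), applies $F$ to the canonical projective resolution of $Re_k$ from Lemma~\ref{lem.canonical projective resolution}, and then lifts a splitting obtained from $\Ext_S^1(R,S)=0$ back along $F$ to conclude $\Ext_{\tilde{A}}^1(Re_k,\tilde{A}e_j)=0$. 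You instead assert the full equivalence $\tilde{A}\text{-Mod}\simeq S\text{-Gr}$ (which is precisely what the covering theory gives for a $\mathbb{Z}$-cover of a one-vertex quiver) and read off the Ext-vanishing as a homogeneous component of $\text{EXT}_S^1(R,S)$. Your route is cleaner and in fact allows a further shortcut you do not take: once the equivalence and the identification of torsion classes are in place, $\tilde{A}\text{-Mod}/\mathcal{T}\simeq S\text{-Gr}/\mathcal{T}'$ and you could appeal directly to Theorem~\ref{teor.A-Gr without flat objects} (or even to Corollary~\ref{cor.examples} via Serre's theorem) rather than re-entering through Theorem~\ref{teor.modules-modulo-locally-finite}(2). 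The only point requiring care, as you yourself flag, is the verification that $e_k\tilde{A}e_{k+d}\cong S_d$ with no spurious identifications; this is standard but should be stated as the covering/smash-product equivalence rather than left implicit.
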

\begin{proof}
By the proof of corollary \ref{cor.examples}, we know that
$\Ext_A^1(R,A)=0$. We look now at $A$ as a factor of the path
algebra of the quiver $Q'$ which has one vertex and $n+1$ loops at
it, i.e., as a factor of the free (noncommutative) $R$-algebra on
$n+1$ variables. Then $A=RQ'/I$, where $I$ is generated by a finite
set $\rho$ of homogeneous elements of $RQ'$, consisting of the
commutativity relations $x_ix_j-x_jx_i$ ($i,j=0,1,...,n$) and a
finite set of homogeneous generators of $\mathbf{p}$, which are
viewed as elements of $R<x_0,x_1,...,x_n>$. It is clear that
$\hat{\rho}=\{h_k:$ $h\in\rho\text{, }k\in\mathbb{Z}\}$ generates
$I_\mathbf{p}$ as an ideal of $RQ$. Moreover, there is a bijection
between $\rho$ and $\hat{\rho}e_k$, thus showing that $\hat{\rho}e_k$
is finite, for each $k\in\mathbb{Z}$.

We now use the theory of Galois coverings of quivers with relations
(see \cite{Gabriel2} and \cite{MVdlP}, for the old theory, and
\cite{CM} and \cite{As} for the most recent one, specially the last
paper, where the theory is developed over an arbitrary commutative
ring as we need). We know that $(Q,\hat{\rho})$ is a Galois covering
of $(Q',\rho )$. That gives a push-down functor
$F:\tilde{A}-Mod\longrightarrow A-Mod$ which takes torsion
$\tilde{A}$-modules to $A$-modules whose elements are annihilated by
powers of $(x_0,x_1,...,x_n)$. It also takes a projective resolution
of $Re_k$ to a (graded) projective resolution of $R$ as an
$A$-module. If
\begin{center}
$\rightarrow
P^{-2}\stackrel{d^{-2}}{\longrightarrow}P^{-1}\stackrel{d^{-1}}{\longrightarrow}P^0\rightarrow
Re_k\rightarrow 0$
\end{center}
is the initial part of a projective resolution as given in lemma
\ref{lem.canonical projective resolution} (with $\tilde{A}$ instead of
$A$), then we view it as a graded one, by putting $P^0=\tilde{A}e_k[0]$,
$P^{-1}=\tilde{A}e_{k-1}[-1]^{(n+1)}$ and
$P^{-2}=\oplus_{r\in\rho}\tilde{A}e_{k-d(r)}[-d(r)]$, where $d(r)$
is the degree of $r$, for each $r\in\rho$. Then the functor $F$
takes it to the following initial part of a graded projective
resolution of $R$ as an $A$-module:
\begin{center}
$\oplus_{r\in\rho}A[-d(r)]\longrightarrow A[-1]\longrightarrow
A\longrightarrow R\rightarrow 0$.
\end{center}
If $f:P^{-1}\longrightarrow \tilde{A}e_j$ is a graded morphism of
$\tilde{A}$-modules, say of degree $t$, such that $f\circ
d^{-2}=0$, then $F(f)\circ F(d^{-2})=0$ and the fact that
$\Ext_A^1(R,A)=0$, and hence $\Ext_{A-Gr}^1(R,A[t])=0$, gives a
graded morphism of $A$-modules $g:F(P^0)=R\longrightarrow
F(Ae_j)=R$ such that $g\circ F(d^{-1})=F(f)$.

By the properties of the push-down functor, we have a graded
morphism $\hat{g}:P^0=\tilde{A}e_k\longrightarrow \tilde{A}e_j$ of
the same degree that $g$ such that $F(\hat{g})=g$ and $\hat{g}\circ
d^{-1}=f$. It follows that
$\Ext_{\tilde{A}-Gr}^1(Re_k,\tilde{A}e_j[t])=0$, for all $j,k\in
Q_0$ and $t\in\mathbb{Z}$ and, hence, also
$\Ext_{\tilde{A}}^1(Be_k,\tilde{A}e_j)=\Ext_{\tilde{A}}^1(Re_k,\tilde{A}e_j)=0$,
where $B=\oplus_{i\in Q_0}Re_i$ (see the beginning of section 3).

Bearing in mind proposition \ref{prop.algebra with quiver-relations
incomplete}, we conclude that all hypotheses of assertion 2 in
theorem \ref{teor.modules-modulo-locally-finite} are satisfied by
$\tilde{A}$ and, hence, $\tilde{A}-\text{Mod}/\mathcal{T}$ is a
locally finitely presented Grothendieck category with no nonzero
flat objects.
\end{proof}


\begin{thebibliography}{99}
\bibitem{AR} {\sc J.\ Ad\'amek and J.\ Rosick\'y}, {\sl Locally
presentable and accessible
categories}. London Math. Soc Lect. Notes Ser. {\bf 189}. Cambridge
Univ. Press 1994.
\bibitem{As} {\sc H. Asashiba}, {\sl A generalization of Gabriel's
Galois covering functors and derived
equivalences}. J. Algebra {\bf 334} (2011), 109-149.
\bibitem{BK} {\sc M.C.R. Butler and A.D. King}, {\sl Minimal resolutions of
algebras.} J. Algebra {\bf 212} (1999), 323-362.
\bibitem{CE} {\sc H.\ Cartan and S.\ Eilenberg}, {\sl Homological
Algebra}, 7th edition. Princeton Univ. Press 1973.
\bibitem{CM} {\sc C. Cibils and E.N. Marcos}, {\sl Skew category, Galois
covering and smash product of a
k-category}. Proc. Amer. Math. Soc. {\bf 134}(1) (2006), 39-50.
\bibitem{CB}
{\sc W.\ Crawley-Boevey}, {\sl Locally finitely presented additive
categories.}
Comm.\ Algebra {\bf 22}(1994), 1641--1674.
\bibitem{CPT}
{\sc S.\ Crivei, M.\ Prest and B.\ Torrecillas}, {\sl Covers in finitely
accessible categories.} Proc.\ Amer.\ Math.\ Soc {\bf 138} (2010),
1213--1221.
\bibitem{CS}
{\sc J.\ Cuadra and D.\ Simson}, {\sl Flat comodules and perfect
coalgebras.}
Comm.\ Algebra {\bf 35} (2007), 3164--3194.
\bibitem{EE} {\sc E.E.\ Enochs and S.\ Estrada}, {\sl Relative homological
algebra in the category of quasi-coherent sheaves.} Adv.\ in Math.
{\bf 194} (2005), 284--295.
\bibitem{Gabriel}
{\sc P.\ Gabriel}, {\sl Des cat\'egories ab\'eliennes.} Bull.\ Soc.\ Math.\
France.
{\bf 90}(1962), 323--448.
\bibitem{Gabriel2} {\sc P. Gabriel}, {\sl The universal cover of a
representation-finite
algebra}. Springer LNM {\bf 903} (1981), 68-105.
\bibitem{G}
{\sc V.\ E.\ Govorov}, {\sl On flat modules} (in Russian). Sibirsk.\ Mat.\
\v Z. {\bf 6} (1965), 300--304.
\bibitem{GD}
{\sc A.\ Grothendieck, and J.\ A.\ Dieudonn\'e}, {\sl El\'ements de
g\'eom\'etrie alg\'ebrique I.} Grundlehren math. Wiss, {\bf 166}.
Berlin-Heidelberg-New York, Springer-Verlag, 1971.
\bibitem{H} {\sc R. Hartshorne}, {\sl Algebraic Geometry}, 6th
edition, 1993.
\bibitem{Herzog}
{\sc I.\ Herzog}, {\sl Pure-injective envelopes.} J.\ Algebra Appl. {\bf
2} (2003), 397--402.
\bibitem{Krause}
{\sc H.\ Krause}, {\sl The spectrum of a module category}, Mem. Amer. Math.
Soc., {\bf 149} (2001), no. 707.

\bibitem{Kunz}
{\sc E.\ Kunz}, {\sl Introduction to commutative algebra and algebraic
geometry}. Birkhäuser Boston, Inc., Boston, MA, 1985.

\bibitem{L}
{\sc D.\ Lazard}, {\sl Autour de la platitude.} Bull.\ Soc.\ Math.\ France
{\bf 97} (1969), 81--128.

\bibitem{MVdlP} {\sc R. Mart\'inez Villa and J. A. de la Peña}, {\sl The
universal cover of a quiver with
relations}, J.\ Pure\ Appl. Algebra {\bf 30} (1983), 277-292.

\bibitem{mat}
{\sc H.\ Matsumura}, {\sl Commutative algebra}, Second edition. Mathematics
Lecture Note Series, {\bf 56}. Benjamin/Cummings Publishing Co., Inc., Reading,
Mass., 1980.

\bibitem{Mur}{\sc D.\ Murfet},
{\sl An adjunction for modules over projective schemes}, available at
www.therisingsea.org

\bibitem{Mur2}{\sc D.\ Murfet},
{\sl Modules over projective schemes}, available at
www.therisingsea.org

\bibitem{NasFred}
{\sc C.\ N\v ast\v asescu, and F.\ Van\ Oystaeyen}, {\sl Graded and filtered
rings and modules}, Lecture Notes in Math., {\bf 758}, Springer-Verlag, Berlin,
1979.



\bibitem{Prest}
{\sc M.\ Prest, and A.\ Ralph}, {\sl Locally finitely presented categories of
sheaves of modules}, available at
http://www.maths.manchester.ac.uk/~mprest/publications.html

\bibitem{R}
{\sc C.\ Riedtmann}, {\sl Algebren, Darstellungen, \"Uberlagerungen und
zur\"uck}, Comment. Math. Helv. {\bf 55} (1980). 199--224

\bibitem{Rot}
{\sc J.J.\ Rotman}, {\sl An introduction to Homological Algebra}.
Academic Press 1979.

\bibitem{Rump2}
{\sc W.\ Rump}, {\sl Locally finitely presented categories of sheaves.} J.\
Pure\ Appl.\ Algebra {\bf 214} (2010), 177--186.

\bibitem{Rump}
{\sc W.\ Rump}, {\sl Flat covers in abelian and in non-abelian categories.}
Adv.\ Math. {\bf 225} (2010), 1589--1615.

\bibitem{Sim} {\sc D. Simson}, {\sl Path coalgebras of profinite bound quivers,
cotensor coalgebras of bound species and locally nilpotent
representations}. Colloq. Math. {\bf 109}(2) (2007), 307-343.

\bibitem{S} {\sc B.\ Stenstr\" om},
{\sl Purity in functor categories.} J.\ Algebra. {\bf 8} (1968), 352--361.

\bibitem{Sten} {\sc B.\ Stenstr\" om},
{\sl Rings of Quotients}, GMW {\bf 217}, Springer-Verlag, New York-Heidelberg
1975.

\bibitem{Swee} {\sc M.E. Sweedler}, {\sl Hopf algebras}, W. A.
Benjamin (1969).
\end{thebibliography}
\end{document}